\newcommand{\dint}{\displaystyle\int}
\theoremstyle{plain}
\newtheorem{theorem}{Theorem}[section]
\newtheorem{hy}{Assumption}[section]
\newtheorem{corollary}[theorem]{Corollary}
\newtheorem{lemma}[theorem]{Lemma}
\newtheorem{proposition}[theorem]{Proposition}
\theoremstyle{definition}
\newtheorem{definition}[theorem]{Definition}
\theoremstyle{remark}
\newtheorem{remark}[theorem]{Remark}
\numberwithin{equation}{section}
\numberwithin{theorem}{section}
\begin{document}
	\renewcommand{\thefootnote}{\fnsymbol{footnote}}
	\begin{center}
		{\Large \textbf{Brownian Bridge with Random Length and Pinning Point for Modelling of Financial Information}} \\[12pt] 
		\textbf{Mohammed Louriki} \footnote[1]{Mathematics Department, Faculty of Sciences Semalalia, Cadi Ayyad University, Boulevard Prince Moulay Abdellah,	P. O. Box 2390, Marrakesh 40000, Morocco. 
			E-mail: \texttt{louriki.1992@gmail.com}}\footnote[2]{ Mathematics Department, Linnaeus University, Vejdesplats 7, SE-351 95 V\"axj\"o, Sweden.}
		\\[0pt]
	\end{center}
	
	\begin{abstract}
		In this paper, we introduce an extension of a Brownian bridge with a random length by including uncertainty also in the pinning level of the bridge. The main result of this work is that unlike for deterministic pinning point, the bridge process fails to be Markovian if the pining point distribution is absolutely continuous with respect to the Lebesgue measure. Further results include the derivation of formulae to calculate the conditional expectation of various functions of the random pinning time, the random pinning location, and the future value of the Brownian bridge, given an observation of the underlying process. For the specific case that the pining point has a two-point distribution, we state further properties of the Brownian bridge, e.g., the right continuity of its natural filtration and its semi-martingale decomposition. The newly introduced process can be used to model the flow of information about the behaviour of a gas storage contract holder; concerning whether to inject or withdraw gas at some random future time.
		
	\end{abstract}
	\smallskip
	\noindent 
	\textbf{Keywords:} Bayes theorem, Brownian bridges, stopping times, Markov processes, semi-martingale decomposition.\\
	\\ 
	\\
	\textbf{MSC 2010:} 60G15, 60G40, 60J25, 60F99.
	\begin{center}
		\section{Introduction}
		\label{Setion_1}
	\end{center}

	A stochastic process, obtained by conditioning a known process to start from an initial point at time zero and to arrive at a fixed point $z$ in the state space at a deterministic future time $r>0$, is called a bridge with deterministic length $r$ and pinning point $z$ associated with the given process. Many interesting examples are known. We mention, the Brownian, Gamma, Gaussian, L\'evy, and Markov bridges, see \cite{EY}, \cite{FG}, \cite{FPY}, \cite{GSV} and \cite{HHM}. Particularly, the Brownian bridge with deterministic length and pinning point plays a key role in many areas of statistics and probability theory and has become a well-known powerful tool in a variety of applications, for example, it appears as the large population limit of the cumulative sum process, when sampling randomly without replacement from a finite population, see \cite{R}. Moreover, it comes out in the limit for the normalized difference between a given distribution and its empirical law. It also plays a crucial role in the Kolmogorov-Smirnov test. Furthermore, it has many applications in finance, see, e.g., \cite{B}, \cite{K}, \cite{BS} and \cite{EW}.
	
	Using pathwise representations of the bridges with deterministic length, Bedini et al. \cite{BBE} and Erraoui et al. see, \cite{EL} and \cite{EHL}, recently introduced bridges with random length by substituting the deterministic length $r$ in the explicit expression of the bridge with the values of a random time $\tau$. Bridges with random length associated with L\'evy processes have been studied by Erraoui et al. by reference to their finite-dimensional distributions, see \cite{EHL(Levy)}. Moreover, in \cite{BBE} the authors consider a new approach to credit risk, in which the information about the default time of a financial company, i.e. the time of bankruptcy, is modelled, using a Brownian bridge starting from zero and conditioned to vanish when the default occurs. The extension \cite{EL} of \cite{BBE} presents bridges of random length associated with Gaussian processes.
	For studies of the gamma bridge with random length, see \cite{EHL}.
	
	More recent works have introduced uncertainty in the pinning level of the bridge. For example, in the modelling framework for accumulation processes presented in \cite{BHM} the aggregate claims process takes the form of a gamma bridge with random pinning point. Moreover, in \cite{EV} the authors considered the problem of optimal stopping of a Brownian bridge with an unknown
	pinning point.
	
	The Markov property of the Brownian bridge with respect to its natural filtration was proven for random length and deterministic pinning point, as well as for deterministic length and random pinning point, in \cite{BBE} and \cite{HHM}, respectively.
	
	In the current article we allow for uncertainty in both pinning level and time level of the Brownian bridge, and we call this process a "Brownian bridge with random length and pinning point". For a strictly random time $\tau$ and a random variable $Z$, the Brownian bridge $\zeta=(\zeta_t, t\geq 0)$ with random length $\tau$ and pinning point $Z$ is defined by:
	\begin{equation}
	\zeta_t=W_{t\wedge \tau}-\dfrac{t\wedge \tau}{\tau}W_{\tau}+\dfrac{t\wedge \tau}{\tau}Z,~ t \geq 0,\label{eqdefofzetaintr}
	\end{equation}
	where $W=(W_t,t\geq 0)$ is a Brownian motion and $W$, $\tau$ and $Z$ are independent. The main result of this paper is that the Markov property of the Brownian bridge with random length and pinning point depends on the nature of its pinning point in the following sense: If the probability distribution of its pinning point is discrete, the Brownian bridge possesses the Markov property. Otherwise, if the law of its pinning point is absolutely continuous with respect to the Lebesgue measure, the Brownian bridge fails to be Markovian. Further results include the derivation of formulae to calculate the conditional expectation of various functions of the random pinning time, the random pinning location, and the future value of the Brownian bridge, given an observation of the underlying process.
	
	As an application, we suggest an information-based approach to gas storage valuation, where the flow of information that motivates the holder of a gas storage contract to act at time $\tau$ by injecting or withdrawing gas, is modelled explicitly through the natural completed filtration $\mathbb{F}^{\xi}$ generated by an underlying information process $\xi=(\xi_t,t\geq 0)$. In our model this process is defined to be the Brownian bridge with random length pinned at a two-point random variable, that is, $\xi$ takes the form:
	\begin{equation}
	\xi_t=W_{t\wedge \tau}-\dfrac{t\wedge \tau}{\tau}W_{\tau}+\dfrac{t\wedge \tau}{\tau}\bigg(z_1\mathbb{I}_{\{X=0\}}+z_2\mathbb{I}_{\{X=1\}}\bigg),~ t \geq 0,\label{eqdefofxiintr}
	\end{equation}
	where $X$ follows the Bernoulli distribution. The intuitive idea here is that the holder of the gas storage contract chooses to do nothing while the Brownian
	bridge information process is away from the boundaries $z_1$ and $z_2$, alternatively that the holder decides to act (inject or withdraw) when the Brownian bridge absorbs at $z_1$ or $z_2$. This raises the question whether actions (injecting or withdrawal) can be foreseen by observing the evolution of the Brownian bridge information process. Answering this question relies on the theoretical properties of this process, such as, the Markov property, the right continuity of its natural filtration and its semi-martingale decomposition. For a deeper discussion of gas storage valuation we refer the reader to \cite{BJ}, \cite{CF} and \cite{CL}.
	
	The remainder of this article is structured as follows. In section 2  we provide preliminary facts that are used throughout the paper. In section 3 we define the Brownian bridge $\zeta$ with both random length $\tau$ and pinning point $Z$. Then we analyse the conditions under which the Markov property of $\zeta$, with respect to its natural filtration $\mathbb{F}^{\zeta}$, holds. We prove that for a pinning point with discrete distribution, $\zeta$ is an $\mathbb{F}^{\zeta}$-Markov process, whereas for a pinning point, the law of which is absolutely continuous with respect to the Lebesgue measure, the process $\zeta$ cannot be an $\mathbb{F}^{\zeta}$-Markov process. Section 4 deals with the case, where the pinning point is two-point distributed. We show that the random length $\tau$ is an $\mathbb{F}^{\xi,c}$-stopping time, where $\mathbb{F}^{\xi,c}$ is the completed natural filtration of $\xi$. In addition to that, we prove that $\mathbb{F}^{\xi,c}$ satisfies the usual conditions of right-continuity and completeness. Finally, we derive the semi-martingale decomposition of $\xi$.
	
	The following notation will be used throughout the paper: 
	For a complete probability space $(\Omega,\mathcal{F},\mathbb{P})$, $\mathcal{N}_p$ denotes the
	collection of $\mathbb{P}$-null sets. If $\theta$ is a random variable, then $\mathbb{P}_{\theta}$ and $F_{\theta}$ are its law and its distribution function under $\mathbb{P}$, respectively. $C\left(\mathbb{R}_+,\mathbb{R}\right)$ denotes the canonical space, that is the space of continuous real-valued functions defined on $\mathbb{R}_+$, $\mathcal{C}$ the $\sigma$-algebra generated by the canonical process. If $E$ is a topological space, then the Borel $\sigma$-algebra over $E$ will be denoted by $\mathcal{B}(E)$. The characteristic function of a set $A$ is written $\mathbb{I}_{A}$. 
	The symmetric difference of two sets $A$ and $B$ is denoted by $A\Delta B$. $p(t, x, y)$, $x, y\in \mathbb{R}$, $t\in\mathbb{R}_+$, denotes the Gaussian density function with variance $t$ and mean $y$, if $y=0$, for simplicity of notation we write $p(t,x)$ rather than $p(t, x, 0)$. $\text{Cov} (Y_s,Y_t)$, $s,t \in \mathbb{R}_+$ is the covariance function associated with the process $Y$.
	Finally for any process $Y=(Y_t,\, t\geq 0)$ on $(\Omega,\mathcal{F},\mathbb{P})$, we define by:
	\begin{enumerate}
		\item[(i)] $\mathbb{F}^{Y}=\bigg(\mathcal{F}^{Y}_t:=\sigma(Y_s, s\leq t),~ t\geq 0\bigg)$ the natural filtration of the process $Y$.
		\item[(ii)] $\mathbb{F}^{Y,c}=\bigg(\mathcal{F}^{Y,c}_t:=\mathcal{F}^{Y}_t\vee \mathcal{N}_{P},\, t\geq 0\bigg)$ the completed natural filtration of the process $Y$.
		\item[(iii)] $\mathbb{F}^{Y,c}_{+}=\bigg(\mathcal{F}^{Y,c}_{t^{+}}:=\underset{{s>t}}\bigcap\mathcal{F}^{Y,c}_{s}=\mathcal{F}^{Y}_{t^{+}}\vee \mathcal{N}_{P},\, t\geq 0\bigg)$ the smallest filtration containing $\mathbb{F}^{Y}$ and satisfying the usual hypotheses of right-continuity and completeness.
	\end{enumerate}
	\begin{center}
		\section{Preliminaries}
	\end{center}
	We start by recalling some basic results on Brownian bridges and properties of conditional expectations that will be used in the sequel.\\
	\subsection{Brownian Bridge Processes}
	The Brownian bridge is a fundamental process in statistics and probability theory. This section summarizes a few well-known results about the extended Brownian bridge (a Brownian bridge defined for $t\in \mathbb{R}_{+}$).
	\begin{definition}
		Given a strictly positive real number $r$, a real number $z$ and a Brownian motion $W$, the process $ \zeta^{r,z}:\Omega \longmapsto C\left(\mathbb{R}_+,\mathbb{R}\right)$, defined by
		\begin{equation}
		\zeta_t^{r,z}(\omega):=W_{t\wedge r}(\omega)-\dfrac{t\wedge r}{r}W_{r}(\omega)+\dfrac{t\wedge r}{r}z,~ t \geq 0,~ \omega \in \Omega,\label{eqdefofzeta^r,z}
		\end{equation}
		is called Brownian bridge with deterministic length $r$ and pinning point $z$ associated to $W$.
	\end{definition}
	\begin{remark}\label{remarkmeasurable}
		The process $\zeta^{r,z}$ is the Brownian bridge, which is identically equal to $z$ on the time interval $[r,\infty[$. The process $\zeta^{r,z}$ is in fact a function of the variables $(r,t,z,\omega)$ and for technical reasons, it is convenient to have some joint measurability properties. Since the map $(r,t,z)\longmapsto \zeta_t^{r,z}(\omega)$ is continuous for all $\omega \in \Omega$, the map $(r,t,z,\omega)\longmapsto \zeta_t^{r,z}(\omega)$ of $\big((0,+\infty)\times \mathbb{R}_{+}\times \mathbb{R} \times \Omega ,\mathcal{B}\big((0,+\infty)\big)\otimes \mathcal{B}(\mathbb{R}_{+})\otimes \mathcal{B}(\mathbb{R})\otimes\mathcal{F}\big)$ into $(\mathbb{R}_{+},\mathcal{B}(\mathbb{R}_{+}))$ is measurable. In particular, the $t$-section of
		$(r,t,z,\omega)\longmapsto \zeta_t^{r,z}(\omega)$: $(r,z,\omega)\longmapsto \zeta_t^{r,z}(\omega)$ is measurable with respect to the $\sigma$-algebra
		$\mathcal{B}\big((0,+\infty)\big)\otimes \mathcal{B}(\mathbb{R})\otimes\mathcal{F}$, for all $t \geq 0$.
	\end{remark}
	The next proposition lists some useful properties of the process $\zeta^{r,z}$.
	\begin{proposition}
		The process $\zeta^{r,z}$ is a Gaussian process. Moreover, it satisfies the following properties:
		\begin{enumerate}
			\item[(i)] For all $0<t<r$, the random variable $\zeta^{r,z}_t$ is non-degenerate and its density function is given by: 
			\begin{equation}
			\varphi_{\zeta^{r,z}_t}(x)=p\bigg(\dfrac{t(r-t)}{r},x,\dfrac{t}{r}z\bigg).\label{zeta^{r,z}density}
			\end{equation}
			\item[(ii)] Let $n$ be an integer greater than $1$ and $0<t_1<t_2<...<t_n<r$, then the Gaussian vector $(\zeta_{t_1}^{r,z},\ldots,\zeta_{t_n}^{r,z})$ has an absolutely continuous density with respect to Lebesgue measure on $\mathbb{R}^n$. Moreover, its density function is given by 
			\begin{equation}
			\varphi_{\zeta_{t_1}^{r,z},\ldots,\zeta_{t_n}^{r,z}}(x_1,\ldots,x_n)=\dfrac{p(r-t_n,z-x_n)}{p(r,z)}\prod_{i=1}^{n}p(t_i-t_{i-1},x_i-x_{i-1}).\label{eqdensityofvectorzeta}
			\end{equation}
			for every $(x_1,x_2,...,x_n) \in \mathbb{R}^n$, with the understanding that $x_0 =t_0 =0$.
			\item[(iii)] The process $\zeta^{r,z}$ is a Markov process with respect to its completed natural filtration. Moreover, for all $0<t<u<r$, its transition density is given by: 
			\begin{align}
			\mathbb{P}(\zeta_u^{r,z}\in \mathrm{d}y\vert \zeta_t^{r,z}=x)
			&=\dfrac{p(r-u,z-y)p(u-t,y-x)}{p(r-t,z-x)}\mathrm{d}y.\label{eqtransitionlawofX^{r,z}lévy}
			\end{align}
			An equivalent formulation of \eqref{eqtransitionlawofX^{r,z}lévy} is 
			\begin{equation}
			\mathbb{P}(\zeta_u^{r,z}\in \mathrm{d}y\vert \zeta_t^{r,z}=x)=p\bigg(\dfrac{r-u}{r-t}(u-t),y,\dfrac{r-u}{r-t}x+\dfrac{u-t}{r-t}z\bigg)\mathrm{d}y.\label{eqtransitionlawofX^{r,z}}
			\end{equation}
			\item[(iv)] The process $\zeta^{r,z}$ satisfies the following equation 
			\begin{equation}
			\zeta_{t}^{r,z}=b_{t}^{r,z}+\int_{0}^{t}\dfrac{z-\zeta_{s}^{r,z}}{r-s}\mathbb{I}_{\{s<r\}}\mathrm{d}s,\label{eqsemimartingalezeta^r,zonR+}
			\end{equation}
			where $(b^{r,z}_t, t\geq 0)$ is an $\mathbb{F}^{\zeta^{r,z}}$-Brownian motion stopped at $r$.		
		\end{enumerate}
	\end{proposition}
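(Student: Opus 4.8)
The plan is to derive all four assertions from one structural fact: for $0<t_1<\cdots<t_n<r$ the random vector $(\zeta^{r,z}_{t_1},\dots,\zeta^{r,z}_{t_n})$ has the law of $(W_{t_1},\dots,W_{t_n})$ conditioned on $\{W_r=z\}$. Gaussianity of $\zeta^{r,z}$ is immediate, since for fixed $r,z$ every $\zeta^{r,z}_t$ is an affine function of the Gaussian vector $(W_{t_1},\dots,W_{t_n},W_r)$, so any finite linear combination of the $\zeta^{r,z}_{t_i}$ is one–dimensional Gaussian. For the conditioning identity I would note that, for $t<r$, the variable $W_t-\tfrac tr W_r$ is uncorrelated with $W_r$ (indeed $\mathrm{Cov}(W_t-\tfrac tr W_r,\,W_r)=t-\tfrac tr r=0$) and, being jointly Gaussian with it, independent of it; applying this simultaneously at $t_1,\dots,t_n$ shows $(W_{t_i}-\tfrac{t_i}{r}W_r)_{i=1}^n$ is independent of $W_r$, so conditioning on $\{W_r=z\}$ merely restores the deterministic shifts $\tfrac{t_i}{r}z$, turning $W_{t_i}$ into $\zeta^{r,z}_{t_i}$. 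Taking $n=1$ gives $\mathbb{E}[\zeta^{r,z}_t]=\tfrac tr z$ and $\mathrm{Var}(\zeta^{r,z}_t)=\mathrm{Var}(W_t-\tfrac tr W_r)=\tfrac{t(r-t)}{r}>0$, which is (i). For (ii), the joint density of $(W_{t_1},\dots,W_{t_n},W_r)$ at $(x_1,\dots,x_n,w)$ equals $\big(\prod_{i=1}^{n}p(t_i-t_{i-1},x_i-x_{i-1})\big)\,p(r-t_n,w-x_n)$ by independence of increments, and since its integral over the first $n$ variables is the marginal density $p(r,w)$ of $W_r$, dividing its $w=z$ section by $p(r,z)$ produces a genuine probability density on $\mathbb{R}^n$, which must therefore be the density of the (hence nondegenerate, absolutely continuous) Gaussian vector $(\zeta^{r,z}_{t_1},\dots,\zeta^{r,z}_{t_n})$; this is exactly \eqref{eqdensityofvectorzeta}.

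For (iii) I would take the ratio of the $(n{+}1)$-point density \eqref{eqdensityofvectorzeta} at $t_1<\cdots<t_n=t<u<r$ to the $n$-point density at $t_1<\cdots<t_n$: the factors $\prod p(t_i-t_{i-1},x_i-x_{i-1})$ and $p(r,z)^{-1}$ cancel, leaving $\dfrac{p(r-u,z-y)\,p(u-t,y-x_n)}{p(r-t,z-x_n)}$, which depends on the conditioning vector only through its last coordinate $x_n$. Hence $\mathbb{E}[f(\zeta^{r,z}_u)\mid\mathcal{F}^{\zeta^{r,z}}_t]=\mathbb{E}[f(\zeta^{r,z}_u)\mid\zeta^{r,z}_t]$ for $0<t<u<r$; the remaining cases are trivial since $\zeta^{r,z}\equiv z$ on $[r,\infty)$, and replacing $\mathbb{F}^{\zeta^{r,z}}$ by its completion $\mathbb{F}^{\zeta^{r,z},c}$ leaves these conditional expectations unchanged. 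This proves \eqref{eqtransitionlawofX^{r,z}lévy}, and the equivalent form \eqref{eqtransitionlawofX^{r,z}} follows by completing the square in $y$, which displays the right-hand side of \eqref{eqtransitionlawofX^{r,z}lévy} as the Gaussian density in $y$ with mean $\tfrac{r-u}{r-t}x+\tfrac{u-t}{r-t}z$ and variance $\tfrac{(r-u)(u-t)}{r-t}$, the normalising constants matching automatically.

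For (iv), set $\beta^{r,z}_t:=\zeta^{r,z}_t-\int_0^t\frac{z-\zeta^{r,z}_s}{r-s}\,\mathbb{I}_{\{s<r\}}\,\mathrm{d}s$. First one checks the drift is a.s.\ well defined: on $[0,t]$ with $t<r$ the integrand is continuous, and near $s=r$ the a.s.\ endpoint estimate $z-\zeta^{r,z}_s=O\!\big(\sqrt{(r-s)\log\log\tfrac1{r-s}}\big)$ for the bridge makes $\tfrac{z-\zeta^{r,z}_s}{r-s}$ integrable on $(0,r)$, while the indicator truncates it beyond $r$. Next, $\beta^{r,z}$ is an $\mathbb{F}^{\zeta^{r,z}}$-martingale: from the Markov property and \eqref{eqtransitionlawofX^{r,z}} one gets $\mathbb{E}[\zeta^{r,z}_u-\zeta^{r,z}_t\mid\mathcal{F}^{\zeta^{r,z}}_t]=\tfrac{u-t}{r-t}\big(z-\zeta^{r,z}_t\big)$ for $t<u<r$, while $\mathbb{E}\big[\int_t^u\tfrac{z-\zeta^{r,z}_s}{r-s}\,\mathrm{d}s\,\big|\,\mathcal{F}^{\zeta^{r,z}}_t\big]=\int_t^u\tfrac{z-\mathbb{E}[\zeta^{r,z}_s\mid\mathcal{F}^{\zeta^{r,z}}_t]}{r-s}\,\mathrm{d}s=\int_t^u\tfrac{z-\zeta^{r,z}_t}{r-t}\,\mathrm{d}s$, and the two coincide. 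Since $\beta^{r,z}$ is continuous and differs from $\zeta^{r,z}$ by a finite-variation process, while $\zeta^{r,z}$ itself differs from $W$ by the finite-variation term $\tfrac tr(z-W_r)$ on $[0,r)$, its quadratic variation is $\langle\beta^{r,z}\rangle_t=t$; L\'evy's characterisation then identifies $\beta^{r,z}$ on $[0,r)$, hence on $[0,r]$ by continuity, as a Brownian motion, and since the integrand vanishes and $\zeta^{r,z}$ is constant for $s\ge r$, $\beta^{r,z}$ is constant on $[r,\infty)$, i.e.\ a Brownian motion stopped at $r$.

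The two genuinely delicate points, on which I would spend the most care, are: in (iv), the a.s.\ convergence of the singular integral $\int_0^r\tfrac{z-\zeta^{r,z}_s}{r-s}\,\mathrm{d}s$ and the fact that $\beta^{r,z}$ is a Brownian motion with respect to the \emph{small} natural filtration $\mathbb{F}^{\zeta^{r,z}}$ of the bridge (not merely with respect to $\mathbb{F}^{W}$) --- both handled by the transition-law computation above together with the endpoint estimate for the bridge; and, in (ii), making the disintegration $\mathrm{Law}(\zeta^{r,z}_{t_1},\dots,\zeta^{r,z}_{t_n})=\mathrm{Law}\big((W_{t_1},\dots,W_{t_n})\mid W_r=z\big)$ fully rigorous through the independence decomposition rather than by informal conditioning on the null event $\{W_r=z\}$. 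Once that identity is established, parts (i)--(iii) reduce to routine bookkeeping with Gaussian densities.
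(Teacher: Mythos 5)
Your proposal is correct, and for parts (i)--(iii) it follows essentially the paper's route: identify the law of $(\zeta^{r,z}_{t_1},\dots,\zeta^{r,z}_{t_n})$ with that of $(W_{t_1},\dots,W_{t_n})$ conditioned on $W_r=z$, read off the joint density as a ratio of Gaussian kernels, and get the Markov property by observing that the ratio of the $(n+1)$-point to the $n$-point density depends only on the last coordinate; your only addition is that you actually justify the conditioning through the orthogonal decomposition $W_{t}=(W_t-\tfrac{t}{r}W_r)+\tfrac{t}{r}W_r$ and the independence of the first summand from $W_r$, where the paper simply invokes the functional identity $\mathbb{E}[F(W_t,t\le r)\mid W_r=z]=\mathbb{E}[F(\zeta^{r,z}_t,t\le r)]$ without proof, and you complete the square instead of quoting the two-sided conditioning of Brownian motion for the second form of the transition density. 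The genuine divergence is in (iv): the paper outsources the semi-martingale decomposition on $[0,r]$ to Corollary 4.1 of Alili and only extends it to $\mathbb{R}_+$ via $\zeta^{r,z}_{t\wedge r}=\zeta^{r,z}_t$, whereas you prove it from scratch --- a.s. convergence of the singular drift integral via the endpoint (LIL-type) estimate for the bridge, the martingale property of $\beta^{r,z}$ with respect to $\mathbb{F}^{\zeta^{r,z}}$ deduced from the transition law of (iii) together with a conditional Fubini argument, pathwise quadratic variation $t$ inherited from $W$, and L\'evy's characterisation. What the citation buys the paper is brevity and freedom from the two delicate points you correctly single out; what your argument buys is a self-contained proof that makes explicit why the Brownian motion is one for the \emph{bridge's own} filtration. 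Two small steps you gloss over are routine but should be said: the extension of the martingale property across $t=r$ needs uniform integrability (e.g. $\mathbb{E}[\beta^{r,z}_u{}^2]=u$ for $u<r$, or the $L^1$ bound on the drift integral), and instead of the pathwise LIL estimate one can more cheaply check $\int_0^r\frac{\mathbb{E}|z-\zeta^{r,z}_s|}{r-s}\,\hbox{d}s<\infty$, which already gives a.s. absolute convergence of the drift integral --- this is in fact the estimate the paper itself uses later in the proof of its Theorem on the decomposition of $\xi$.
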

	\begin{proof}
		\begin{enumerate}
			
			\item[(i)] The proof of the statement (i) is straightforward.
			\item[(ii)] For any bounded functional $F$ we have 
			\begin{equation}
			\mathbb{E}[F(W_t, t\leq r)|W_r=z]=\mathbb{E}[F(\zeta_t^{r,z}, t\leq r)].\label{eqbridgedefinitionfunctional}
			\end{equation}
			Using \eqref{eqbridgedefinitionfunctional} together with the fact that 
			\begin{equation}
			\mathbb{P}(W_{t_1}\in dx_1,\ldots,W_{t_n}\in dx_n)=\prod_{i=1}^{n}[p(t_i-t_{i-1},x_i-x_{i-1})\mathrm{d}x_i],
			\end{equation}
			for every $0<t_1<t_2<...<t_n$ and $(x_1,x_2,...,x_n) \in \mathbb{R}^n$, we obtain that
			\begin{align*}
			\mathbb{P}(\zeta^{r,z}_{t_1}\in \mathrm{d}x_1,\ldots,\zeta^{r,z}_{t_n}\in \mathrm{d}x_n )&=\mathbb{P}(W_{t_1}\in \mathrm{d}x_1,\ldots,W_{t_n}\in \mathrm{d}x_n|W_r=z )\nonumber\\
			&=\dfrac{p(r-t_n,z-x_n)}{p(r,z)}\prod_{i=1}^{n}[p(t_i-t_{i-1},x_i-x_{i-1})\mathrm{d}x_i].
			\end{align*}
			\item[(iii)] We only need to show that for every $0<t_1<t_2<...<t_n<u<r$ and $(x_1,x_2,...,x_n,y) \in \mathbb{R}^{n+1}$,
			\begin{equation}
			\mathbb{P}(\zeta_u^{r,z}\in \mathrm{d}y|\zeta^{r,z}_{t_1}=x_1,\ldots,\zeta^{r,z}_{t_n}=x_n )=\mathbb{P}(\zeta_u^{r,z}\in \mathrm{d}y|\zeta^{r,z}_{t_n}=x_n ).
			\end{equation}
			By using the statement (ii) we have,
			\begin{align}
			\mathbb{P}(\zeta_u^{r,z}\in \mathrm{d}y|\zeta^{r,z}_{t_1}=x_1,\ldots,\zeta^{r,z}_{t_n}=x_n )&=\dfrac{\varphi_{\zeta_{t_1}^{r,z},\ldots,\zeta_{t_n}^{r,z},\zeta_{u}^{r,z}}(x_1,\ldots,x_n,y)}{\varphi_{\zeta_{t_1}^{r,z},\ldots,\zeta_{t_n}^{r,z}}(x_1,\ldots,x_n)}\mathrm{d}y\nonumber\\
			\nonumber\\
			&=\dfrac{p(r-u,z-y)p(u-t_n,y-x_n)}{p(r-t_n,z-x_n)}\mathrm{d}y\nonumber\\
			\nonumber\\
			&=\mathbb{P}(\zeta_u^{r,z}\in \mathrm{d}y\vert \zeta_{t_n}^{r,z}=x_n).
			\end{align}
			Then $\zeta^{r,z}$ is a Markov process with transition law given by \eqref{eqtransitionlawofX^{r,z}lévy}. The proof is completed by showing the formula \eqref{eqtransitionlawofX^{r,z}}, which is an immediate consequence of the fact that
			\[
			(\zeta_u^{r,z}|\zeta_t^{r,z}=x)\stackrel{law}{=}(W_u|W_t=x,W_r=z)
			\]
			and the result of conditioning Brownian motion at time $u$ on the knowledge of its value at both an earlier and later time that is given by 
			\[
			(W_u|W_t=x,W_r=z)\stackrel{law}{=}\mathcal{N}\bigg(\dfrac{r-u}{r-t}x+\dfrac{u-t}{r-t}z,\dfrac{r-u}{r-t}(u-t)\bigg)
			\]
			where $t<u<r$ and $\mathcal{N}(\mu,\sigma^2)$ is the normal distribution with expectation $\mu$ and variance $\sigma^2$.
			\item[(iv)] From Corollary 4.1 in \cite{A}, the process $\zeta^{r,z}$ satisfies the following equation 
			\begin{equation}
			\zeta_{t}^{r,z}=B_{t}^{r,z}+\int_{0}^{t}\dfrac{z-\zeta_{s}^{r,z}}{r-s}\mathrm{d}s, \quad t\in[0, r],\label{eqsemimartingalezeta^r,z}
			\end{equation}
			where $(B^{r,z}_t, t\in[0, r])$ is an $\mathbb{F}^{\zeta^{r,z}}$-Brownian motion.
			Since $\zeta_{t\wedge r}^{r,z}=\zeta^{r,z}_t$, the formula \eqref{eqsemimartingalezeta^r,z} shows that   
			\begin{equation*}
			\zeta_{t}^{r,z}=b_{t}^{r,z}+\int_{0}^{t}\dfrac{z-\zeta_{s}^{r,z}}{r-s}\mathbb{I}_{\{s<r\}}\mathrm{d}s,
			\end{equation*}
			where $(b^{r,z}_t, t\geq 0)$ is an $\mathbb{F}^{\zeta^{r,z}}$--Brownian motion stopped at $r$.	
		\end{enumerate}	
	\end{proof}
	\subsection{Conditional Law}
	In this part, we recall some important results on conditional probabilities which will be used later.
	\begin{lemma}
		Let $U, V$ and $S$ be three random variables. The conditional law of $(U,V)$ given $S$ can be expressed as follows
		\begin{equation}
		\mathbb{P}_{(U,V)|S=s}(\mathrm{d}u,\mathrm{d}v)=\mathbb{P}_{U|V=v,S=s}(\mathrm{d}u)\mathbb{P}_{V \vert S=s}(\mathrm{d}v).\label{eqargumentconditioanal}
		\end{equation}
	\end{lemma}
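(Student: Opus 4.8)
The plan is to identify the two measures in \eqref{eqargumentconditioanal} by testing against bounded measurable functions of $W$, using that, $U,V,W$ being real-valued, all the regular conditional distributions involved exist and can be realised as probability kernels. First I would fix Borel sets $A,B\subseteq\mathbb R$ and a bounded measurable $h\colon\mathbb R\to\mathbb R$ and evaluate $\mathbb E[\mathbb I_A(U)\mathbb I_B(V)h(W)]$ in two ways. By the defining property of the regular conditional law of $(U,V)$ given $W$,
\[
\mathbb E\big[\mathbb I_A(U)\,\mathbb I_B(V)\,h(W)\big]=\mathbb E\Big[h(W)\,\mathbb P_{(U,V)\mid W}(A\times B)\Big].
\]
On the other hand, conditioning first on $\sigma(V,W)$ and then on $\sigma(W)$, and writing the regular conditional distribution of $U$ given $(V,W)$ as $g(V,W)$ with $g(v,w):=\mathbb P_{U\mid V=v,W=w}(A)$ (Doob--Dynkin together with the existence of a jointly measurable version on $\mathbb R^{2}$), we get
\[
\mathbb E\big[\mathbb I_A(U)\,\mathbb I_B(V)\,h(W)\big]=\mathbb E\Big[\mathbb I_B(V)\,h(W)\,g(V,W)\Big]=\mathbb E\Big[h(W)\int_B\mathbb P_{U\mid V=v,W}(A)\,\mathbb P_{V\mid W}(\mathrm dv)\Big].
\]
Since these two expressions agree for every bounded measurable $h$ and the two factors multiplying $h(W)$ are $\sigma(W)$-measurable and integrable, they coincide $\mathbb P$-almost surely; equivalently,
\[
\mathbb P_{(U,V)\mid W=w}(A\times B)=\int_B\mathbb P_{U\mid V=v,W=w}(A)\,\mathbb P_{V\mid W=w}(\mathrm dv)\qquad\text{for }\mathbb P_W\text{-a.e.\ }w.
\]

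Next I would upgrade this pointwise-in-$(A,B)$ statement to an identity of measures. Applying the previous step with $A$ and $B$ ranging over the countable $\pi$-system $\{(-\infty,q]:q\in\mathbb Q\}$ produces a single $\mathbb P_W$-null set $N$ outside of which the last display holds for all such rectangles. Fix $w\notin N$. For a fixed generator $B=(-\infty,q]$, both sides are finite measures in $A$ that agree on a $\pi$-system generating $\mathcal B(\mathbb R)$, hence agree on all of $\mathcal B(\mathbb R)$ by Dynkin's $\pi$--$\lambda$ lemma; repeating the argument in the second coordinate shows that $\mathbb P_{(U,V)\mid W=w}$ and the measure $E\mapsto\int\!\!\int\mathbb I_E(u,v)\,\mathbb P_{U\mid V=v,W=w}(\mathrm du)\,\mathbb P_{V\mid W=w}(\mathrm dv)$ agree on all Borel rectangles, and therefore, by a final application of the $\pi$--$\lambda$ lemma, on $\mathcal B(\mathbb R^{2})$. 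Since this holds for every $w\notin N$, we obtain \eqref{eqargumentconditioanal}.

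The only genuinely delicate point is the measure-theoretic bookkeeping: one must know that $(v,w)\mapsto\mathbb P_{U\mid V=v,W=w}$ can be chosen to be a probability kernel, so that the iterated integral above is well defined and is itself a version of the regular conditional law of $(U,V)$ given $W$, and one must be able to pick the exceptional null set $N$ independently of $A$ and $B$. Both are handled by the fact that $\mathbb R$ and $\mathbb R^{2}$ are standard Borel spaces --- so regular conditional distributions exist --- together with the availability of a countable generating $\pi$-system; beyond this, the argument is the routine ``chain rule'' for disintegrations.
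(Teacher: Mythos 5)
Your argument is correct and is essentially the rigorous version of the paper's own proof: the paper simply writes the joint law $\mathbb{P}_{(U,V,W)}$ via the two disintegrations $\mathbb{P}_{(U,V)|W=w}(\hbox{d}u,\hbox{d}v)\mathbb{P}_{W}(\hbox{d}w)$ and $\mathbb{P}_{U|V=v,W=w}(\hbox{d}u)\mathbb{P}_{V|W=w}(\hbox{d}v)\mathbb{P}_{W}(\hbox{d}w)$ and compares them, which is exactly what you verify by testing against rectangles and bounded functions of $W$. Your additional bookkeeping (joint measurability of the kernel, a countable generating $\pi$-system, and the Dynkin argument to get a single $\mathbb{P}_W$-null set) just makes explicit the almost-sure uniqueness of the disintegration that the paper leaves implicit.
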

	\begin{proof}
		Our proof starts with the observation that the law of triple of random variables $(U,V,S)$ can be found by two methods
		\begin{align}
		\mathbb{P}_{(U,V,S)}(\mathrm{d}u,\mathrm{d}v,\mathrm{d}s)&=\mathbb{P}_{(U,V)|S=s}(\mathrm{d}u,\mathrm{d}v)\mathbb{P}_{S}(\mathrm{d}s).\label{U,V,W,1}\\
		\nonumber\\
		\mathbb{P}_{(U,V,S)}(\mathrm{d}u,\mathrm{d}v,\mathrm{d}s)&=\mathbb{P}_{U|V=v,S=s}(\mathrm{d}u)\mathbb{P}_{(V,S)}(\mathrm{d}v,\mathrm{d}s)\nonumber\\
		&=\mathbb{P}_{U|V=v,S=s}(\mathrm{d}u)\mathbb{P}_{V \vert S=s}(\mathrm{d}v)\mathbb{P}_{S}(\mathrm{d}s).\label{U,V,W,2}
		\end{align}
		The proof is an immediate consequence of the two formulas \eqref{U,V,W,1} and \eqref{U,V,W,2}.
	\end{proof}
	\begin{lemma}[Bayes formula]
		Let $\theta$ be a random variable on a probability space $(\Omega,\mathcal{F},\mathbb{P})$ and $\mathcal{G}\subset \mathcal{F}$ be a $\sigma$-algebra. Suppose that for any $B\in \mathcal{G}$, the conditional probability $\mathbb{P}(B\vert \theta=a)$ is regular and admits the representation
		$$\mathbb{P}(B\vert \theta=a)=\dint_{B}\rho(\omega,a)\mu(\mathrm{d}\omega)$$
		where $\rho$ is non-negative and measurable in the two variables jointly, and $\mu$ is a $\sigma$-finite measure on $(\Omega,\mathcal{G})$. Then for every bounded measurable function $g$, we have 
		\begin{equation}
		\mathbb{E}[g(\theta)\vert \mathcal{G}](\omega)=\dfrac{\dint_{\mathbb{R}}g(a)\rho(\omega,a)\mathbb{P}_{\theta}(\mathrm{d}a)}{\dint_{\mathbb{R}}\rho(\omega,a)\mathbb{P}_{\theta}(\mathrm{d}a)}.
		\end{equation}
	\end{lemma}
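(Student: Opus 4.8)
The plan is to establish the identity by computing the law of the triple $(\theta,\omega,\omega')$... wait, there is only one ambient $\omega$; rather, the strategy is to reduce the claim to the defining property of conditional expectation, namely that for every $A\in\mathcal{G}$ one has
\[
\mathbb{E}\bigl[\mathbb{I}_A\, g(\theta)\bigr]=\mathbb{E}\Bigl[\mathbb{I}_A\,\tfrac{\int_{\mathbb{R}}g(a)\rho(\omega,a)\mathbb{P}_\theta(\mathrm{d}a)}{\int_{\mathbb{R}}\rho(\omega,a)\mathbb{P}_\theta(\mathrm{d}a)}\Bigr].
\]
So the first step is to disintegrate the joint law of $(\theta,\mathbf{1}_\Omega)$: for a bounded measurable $g$ and $A\in\mathcal{G}$, write $\mathbb{E}[\mathbb{I}_A g(\theta)]=\int_{\mathbb{R}}g(a)\,\mathbb{E}[\mathbb{I}_A\mid\theta=a]\,\mathbb{P}_\theta(\mathrm{d}a)$, which is just the tower property together with the existence of the regular conditional probability $\mathbb{P}(\cdot\mid\theta=a)$ on $\mathcal{G}$. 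Then substitute the assumed density representation $\mathbb{P}(A\mid\theta=a)=\int_A\rho(\omega,a)\,\mu(\mathrm{d}\omega)$ to get $\mathbb{E}[\mathbb{I}_A g(\theta)]=\int_{\mathbb{R}}g(a)\int_A\rho(\omega,a)\,\mu(\mathrm{d}\omega)\,\mathbb{P}_\theta(\mathrm{d}a)$.

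The second step is to apply Tonelli/Fubini (legitimate since $\rho\ge 0$ and $\mu$ is $\sigma$-finite, $\mathbb{P}_\theta$ is a probability measure, and $g$ is bounded, so after splitting $g$ into positive and negative parts everything is $\sigma$-finite and the integrand is jointly measurable by hypothesis) to exchange the order of integration:
\[
\mathbb{E}\bigl[\mathbb{I}_A\, g(\theta)\bigr]=\int_A\Bigl(\int_{\mathbb{R}}g(a)\,\rho(\omega,a)\,\mathbb{P}_\theta(\mathrm{d}a)\Bigr)\mu(\mathrm{d}\omega).
\]
Taking $g\equiv 1$ in this identity shows in particular that $\mathbb{P}(A)=\int_A h(\omega)\,\mu(\mathrm{d}\omega)$ with $h(\omega):=\int_{\mathbb{R}}\rho(\omega,a)\,\mathbb{P}_\theta(\mathrm{d}a)$, so $h$ is (a version of) the density $\mathrm{d}\mathbb{P}|_{\mathcal{G}}/\mathrm{d}\mu$; this also forces $h>0$ $\mathbb{P}$-a.s., so the quotient in the statement is well defined $\mathbb{P}$-a.s.

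The third step converts the $\mu$-integral back into a $\mathbb{P}$-expectation. Since $\mathbb{P}|_{\mathcal{G}}=h\cdot\mu$, for any $\mathcal{G}$-measurable nonnegative (or bounded) $\Phi$ we have $\int_A\Phi\,h\,\mathrm{d}\mu=\mathbb{E}[\mathbb{I}_A\Phi]$. Apply this with $\Phi(\omega)=\bigl(\int g(a)\rho(\omega,a)\mathbb{P}_\theta(\mathrm{d}a)\bigr)\big/\bigl(\int\rho(\omega,a)\mathbb{P}_\theta(\mathrm{d}a)\bigr)=\bigl(\int g\,\rho\,\mathrm{d}\mathbb{P}_\theta\bigr)/h(\omega)$; then $\Phi\,h=\int g(a)\rho(\omega,a)\,\mathbb{P}_\theta(\mathrm{d}a)$, and the displayed identity from step two reads exactly $\mathbb{E}[\mathbb{I}_A g(\theta)]=\mathbb{E}[\mathbb{I}_A\Phi]$ for all $A\in\mathcal{G}$. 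Since $\Phi$ is $\mathcal{G}$-measurable and integrable, this is the characterization of conditional expectation, so $\mathbb{E}[g(\theta)\mid\mathcal{G}]=\Phi$, which is the claimed formula. One should note $\Phi$ is $\mathcal{G}$-measurable because $\rho$ is jointly measurable and integrating out $a$ against the fixed measure $\mathbb{P}_\theta$ preserves measurability in $\omega$ with respect to $\mathcal{G}$ (the regular conditional probability lives on $(\Omega,\mathcal{G})$).

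The main obstacle is mostly bookkeeping rather than conceptual: one must be careful that the density representation is assumed for a fixed $a$ and a fixed $B\in\mathcal{G}$, and one needs it to hold jointly measurably so that Fubini applies and so that $\Phi$ comes out $\mathcal{G}$-measurable; the hypothesis explicitly grants joint measurability of $\rho$, so this is fine. A second minor point is justifying that the denominator $h(\omega)$ is strictly positive $\mathbb{P}$-almost surely — this follows automatically from $\mathbb{P}\ll\mu$ on $\mathcal{G}$ with density $h$, since $\{h=0\}$ has $\mathbb{P}$-measure zero. No deeper difficulty arises.
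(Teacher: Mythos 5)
Your proof is correct and complete. The paper offers no argument of its own for this lemma---it simply refers to Shiryaev \cite{S}---and your derivation (disintegrate $\mathbb{E}[\mathbb{I}_A g(\theta)]$ over $\mathbb{P}_\theta$ via the regular conditional probability, apply Tonelli using the joint measurability of $\rho$, identify $h(\omega)=\int_{\mathbb{R}}\rho(\omega,a)\,\mathbb{P}_\theta(\hbox{d}a)$ as the density of $\mathbb{P}\vert_{\mathcal{G}}$ with respect to $\mu$ so that $h>0$ $\mathbb{P}$-a.s., and then verify the defining property of conditional expectation for the $\mathcal{G}$-measurable quotient) is essentially the standard argument given in that reference, so there is nothing to fix.
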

	\begin{proof}
		See, \cite[pp. 272-274]{S}
	\end{proof}
	\begin{remark}\label{remarkBayes}
		Let $n$ be an integer greater than or equal to $1$ and $X$ be a random variable defined on a probability space $(\Omega,\mathcal{F},\mathbb{P})$ with values in $(\mathbb{R}^n, \mathcal{B}({\mathbb{R}^n}))$. Suppose that 
		$$\mathbb{P}(X\in A\vert \theta=a)=\dint_{A}q(x,a)\mu(\mathrm{d}x),\,\, A\in \mathcal{B}({\mathbb{R}^n}),$$
		where $q$ is a non-negative function, measurable with respect to both variables jointly, and $\mu$ is a $\sigma$-finite measure on $(\mathbb{R}^n, \mathcal{B}({\mathbb{R}^n}))$. Then we obtain
		\begin{equation}
		\mathbb{E}[g(\theta)\vert X=x]=\dfrac{\dint_{\mathbb{R}}g(a)q(x,a)\mathbb{P}_{\theta}(\mathrm{d}a)}{\dint_{\mathbb{R}}q(x,a)\mathbb{P}_{\theta}(\mathrm{d}a)}.
		\end{equation}
	\end{remark}
	\begin{center}
		\section{Brownian Bridges with Random Length and Pinning Point}\label{Brownian bridges with random length and an unknown pinning point}
	\end{center}
	In this section, we define a new process $(\zeta_t, t\geq 0)$, which generalizes the Brownian bridge in the sense that the time $r$, at which the bridge is pinned, and the pinned value $z$ of the bridge are substituted by a random time $\tau$ and a random variable $Z$, respectively. The aim of this section is to analyse the Markov property of $(\zeta_t, t\geq 0)$ with respect to its natural filtration.\\
	\begin{definition}
		Let $\tau: (\Omega,\mathcal{F},\mathbb{P}) \longmapsto (0,+\infty)$ be a strictly positive random time, $Z: (\Omega,\mathcal{F},\mathbb{P}) \longmapsto \mathbb{R}$ be a random variable and $W$ be a Brownian motion. The Brownian bridge with length $\tau$ and pinning point $Z$ associated to $W$, is defined by
		$$\zeta_{t}(\omega):=\zeta_{t}^{r,z}(\omega)\vert_{r=\tau(\omega)}^{z=Z(\omega)}~~, (t,\omega) \in \mathbb{R}_{+} \times \Omega.$$
		Combining the previous equality with \eqref{eqdefofzeta^r,z} reveals that $\zeta$ takes the form
		\begin{equation}
		\zeta_t(\omega)=W_{t\wedge \tau(\omega)}(\omega)-\dfrac{t\wedge \tau(\omega)}{\tau(\omega)}W_{\tau(\omega)}(\omega)+\dfrac{t\wedge \tau(\omega)}{\tau(\omega)}Z(\omega),~ t \geq 0,~\omega\in \Omega.\label{eqdefofzeta}
		\end{equation}
	\end{definition}		
	The process $\zeta$ is obtained by composition of the maps $(r,t,z,\omega)\longmapsto \zeta^{r,z}_t(\omega)$ and $(t,\omega)\longmapsto (\tau(\omega),t,Z(\omega),\omega)$. According to Remark \ref{remarkmeasurable}, it is not hard to verify that the map $\zeta$ from $(\Omega,\mathbb{F})$ into
	$(C\left(\mathbb{R}_+,\mathbb{R}\right), \mathcal{C})$ is measurable.
	\begin{remark}
		Recall that the Brownian bridge with random length $\tau$ and deterministic pinning point $z$ associated to $W$, is defined by 
		\begin{equation}
		\zeta_t^{\tau,z}(\omega)=W_{t\wedge \tau(\omega)}(\omega)-\dfrac{t\wedge \tau(\omega)}{\tau(\omega)}W_{\tau(\omega)}(\omega)+\dfrac{t\wedge \tau(\omega)}{\tau(\omega)}z,~ t \geq 0,~\omega\in \Omega.\label{eqdefofzeta^z}
		\end{equation}
		Analogously, the Brownian bridge with deterministic length $r$ and random pinning point $Z$ associated to $W$, is defined by
		\begin{equation}
		\zeta_t^{r,Z}(\omega)=W_{t\wedge r}(\omega)-\dfrac{t\wedge r}{r}W_{r}(\omega)+\dfrac{t\wedge r}{r}Z(\omega),~ t \geq 0,~\omega\in \Omega.\label{eqdefofzeta^r}
		\end{equation}
		For a deeper discussion of the properties of these processes, we refer the reader to \cite{BBE}, \cite{EHL(Levy)}, \cite{EL}, \cite{EV} and \cite{HHM}.
	\end{remark}
	\begin{figure}[H]
		\centering
		\includegraphics[width=8cm]{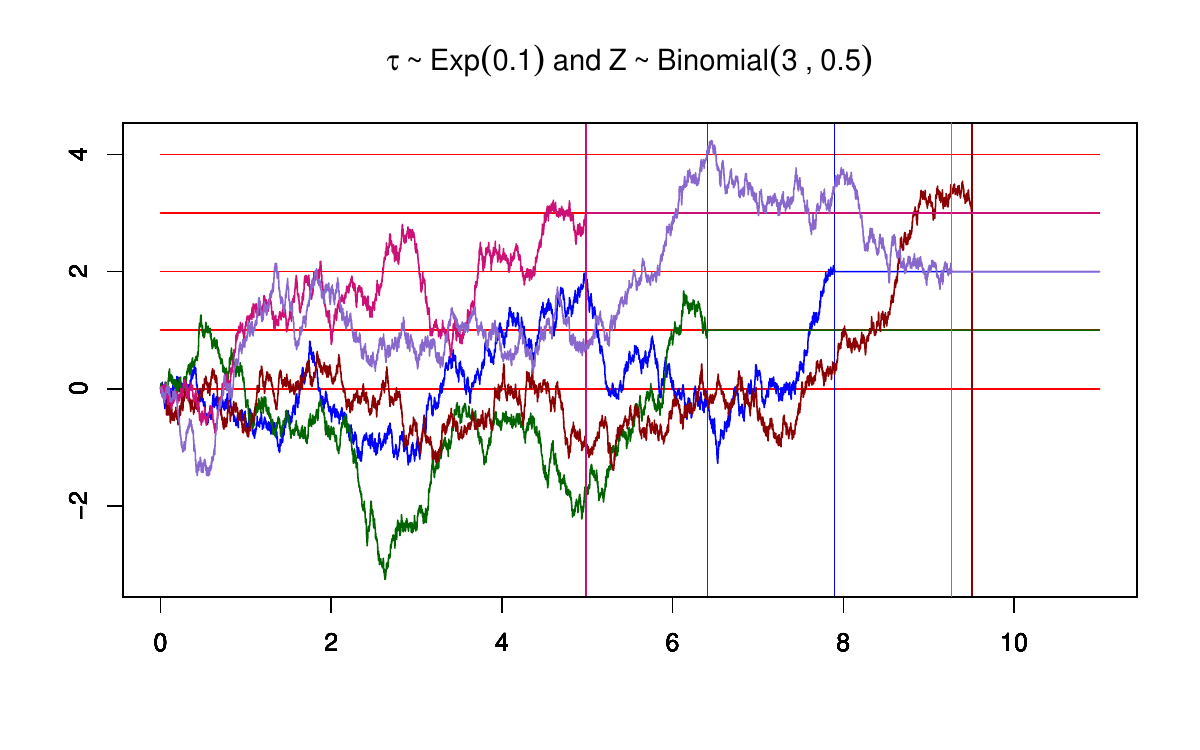}
		\includegraphics[width=8cm]{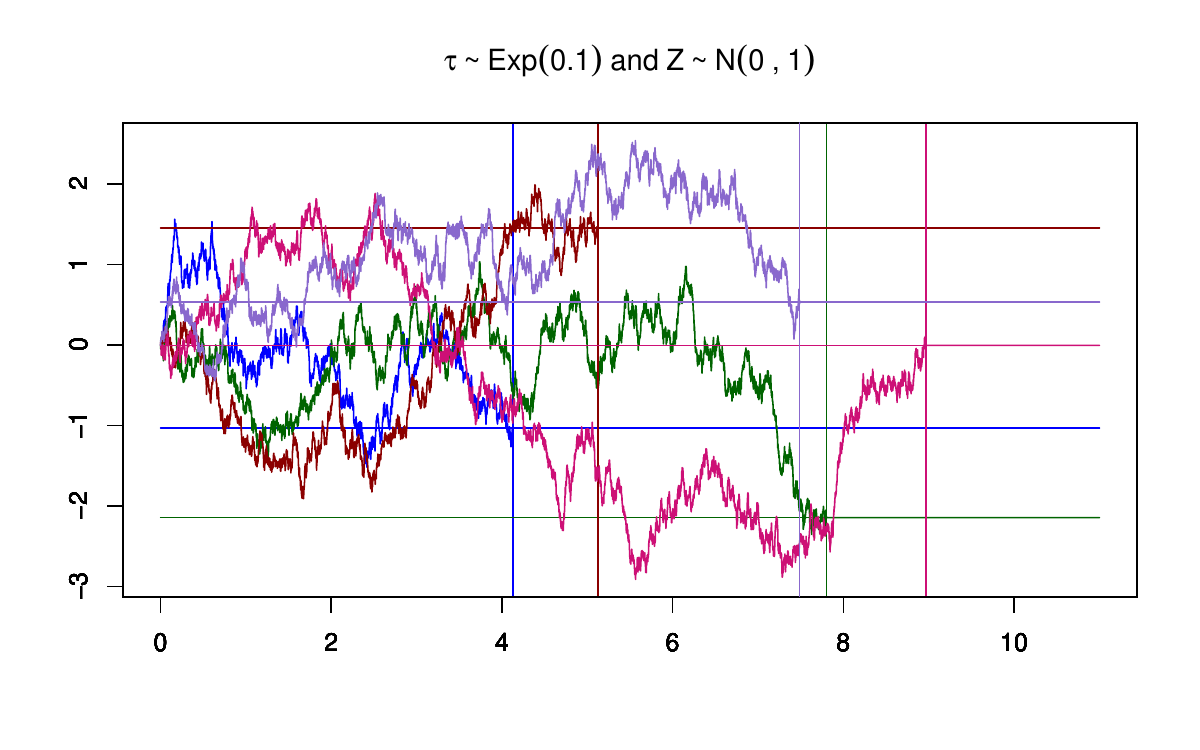}
		\caption{This figure represents simulated paths of a Brownian bridge with both random length and pinning point. In both pictures, the length follows an exponential distribution with rate parameter $\lambda=0.1$, whereas, the pinning point follows a binomial distribution with parameters $3$ and $0.5$ on the left-hand side, and it follows a standard normal distribution on the right-hand side.}
	\end{figure}
	The following natural assumption will be needed throughout the paper:
	\begin{hy}\label{hyindependenttauZW}
		For any Brownian bridge $\zeta$ with length $\tau$ and pinning point $Z$ associated to a Brownian motion $W$, we assume that $\tau$, $Z$ and $W$ are independent.
	\end{hy} 
	\begin{remark}
		Under the above assumption,	it is easy to see that, on the canonical space, for any Brownian bridge $\zeta$ with length $\tau$ and pinning point $Z$, we have
		\begin{equation}\label{condlawresptauZ}
		\mathbb{P}_{\zeta\vert \tau=r,Z=z}=\mathbb{P}_{\zeta^{r,z}},\, \mathbb{P}_{\zeta\vert Z=z}=\mathbb{P}_{\zeta^{\tau,z}}\, \text{and}\, \mathbb{P}_{\zeta\vert \tau=r}=\mathbb{P}_{\zeta^{r,Z}}
		\end{equation}
	\end{remark}
	If the random variable $Z$ is discrete, then we denote its state space by $\Delta=\{z_1,z_2,\ldots\}\subset \mathbb{R}$ and by $p_i$ the probability that the random variable $Z$ takes the value $z_i$. However, if the law of $Z$ is absolutely continuous with respect to the Lebesgue measure, we reserve the notation $f$ to represent its density.
	\begin{proposition}\label{propmeasurablity}
		Let $\zeta$ be the Brownian bridge with length $\tau$ and pinning point $Z$,	we have the following properties:
		\begin{enumerate}
			\item[(i)] The process $(\mathbb{I}_{\{\tau\leq t \}}, t> 0)$ is a modification of the process
			$(\mathbb{I}_{\{\zeta_t = Z \}}, t> 0)$  under the probability measure $\mathbb{P}$.
			\item[(ii)] If $Z$ is a discrete random variable, then for all $t>0$, the event $\{\tau\leq t \}\in \sigma(\zeta_t)\vee \mathcal{N}_p$.
			\item[(iii)] If the law of $Z$ is absolutely continuous with respect to the Lebesgue measure, then there exist times  $t>0$ such that $\{\tau\leq t \} \notin \sigma(\zeta_t)\vee \mathcal{N}_p$.
		\end{enumerate}
	\end{proposition}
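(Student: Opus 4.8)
The plan is to reduce everything to the single structural fact, valid for each fixed $t>0$, that $\zeta_t=Z$ precisely when $\tau\le t$, and then to exploit the non-degeneracy of the deterministic-length bridge $\zeta^{r,z}_t$ for $t<r$. For part (i), fix $t>0$. On the event $\{\tau\le t\}$ one has $t\wedge\tau=\tau$, and since $\tau>0$, formula \eqref{eqdefofzeta} collapses to $\zeta_t=W_\tau-W_\tau+Z=Z$; thus $\{\tau\le t\}\subseteq\{\zeta_t=Z\}$ pathwise. For the reverse inclusion modulo a null set it suffices to check $\mathbb{P}(\zeta_t=Z,\ \tau>t)=0$. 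Conditioning on $\{\tau=r,Z=z\}$ and using \eqref{condlawresptauZ}, this probability equals $\int\!\!\int_{\{r>t\}}\mathbb{P}(\zeta^{r,z}_t=z)\,\mathbb{P}_Z(\mathrm{d}z)\,\mathbb{P}_\tau(\mathrm{d}r)$, and for $r>t$ the random variable $\zeta^{r,z}_t$ is non-degenerate with the absolutely continuous density recorded in part (i) of the preceding proposition, so $\mathbb{P}(\zeta^{r,z}_t=z)=0$. Hence $\mathbb{I}_{\{\tau\le t\}}=\mathbb{I}_{\{\zeta_t=Z\}}$ $\mathbb{P}$-a.s. for every $t>0$.

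For part (ii), let $\Delta=\{z_1,z_2,\dots\}$ denote the countable state space of $Z$. I will show $\{\tau\le t\}=\{\zeta_t\in\Delta\}$ up to a $\mathbb{P}$-null set; since $\{\zeta_t\in\Delta\}=\zeta_t^{-1}(\Delta)\in\sigma(\zeta_t)$, this gives the claim. By (i) it is enough to compare $\{\zeta_t=Z\}$ with $\{\zeta_t\in\Delta\}$. The inclusion $\{\zeta_t=Z\}\subseteq\{\zeta_t\in\Delta\}$ is immediate because $Z\in\Delta$. Conversely, on $\{\zeta_t\in\Delta\}\setminus\{\zeta_t=Z\}$ one necessarily has $\tau>t$ (otherwise $\zeta_t=Z$), so this difference is contained in $\{\zeta_t\in\Delta,\ \tau>t\}$; conditioning exactly as in (i) and using that $\Delta$ is countable while $\zeta^{r,z}_t$ has a density for $r>t$ yields $\mathbb{P}(\zeta_t\in\Delta,\ \tau>t)=0$.

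Part (iii) is the crux, and I would argue by contradiction. Choose $t>0$ with $0<\mathbb{P}(\tau\le t)<1$, which is possible since the law of $\tau$ is non-degenerate. Recall that $A\in\sigma(\zeta_t)\vee\mathcal{N}_p$ if and only if $A\,\Delta\,\{\zeta_t\in B\}$ is $\mathbb{P}$-null for some Borel $B$; so assume $\{\tau\le t\}\,\Delta\,\{\zeta_t\in B\}$ is null, which splits into $\mathbb{P}(\tau\le t,\ \zeta_t\notin B)=0$ and $\mathbb{P}(\tau>t,\ \zeta_t\in B)=0$. By (i), on $\{\tau\le t\}$ we have $\zeta_t=Z$ with $Z$ independent of $\tau$, so the first identity together with $\mathbb{P}(\tau\le t)>0$ forces $\mathbb{P}(Z\in B^c)=0$, i.e. $\int_B f=1$, hence $B$ has strictly positive Lebesgue measure. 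On the other hand, for $r>t$ the law of $\zeta^{r,Z}_t$ has density $h_{r,t}(x):=\int_{\mathbb{R}}p\!\left(\tfrac{t(r-t)}{r},x,\tfrac{t}{r}z\right)f(z)\,\mathrm{d}z$, which is strictly positive for every $x\in\mathbb{R}$ because the Gaussian kernel $p$ is; since $\mathbb{P}(\tau>t)>0$, the second identity forces $\int_B h_{r,t}(x)\,\mathrm{d}x=0$ for $\mathbb{P}_\tau$-almost every $r>t$, whence $B$ is Lebesgue-null — a contradiction.

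The only genuinely delicate ingredient is this last step: translating ``$\{\tau\le t\}$ is not $\sigma(\zeta_t)$-measurable'' into an analytic incompatibility. Conceptually, conditionally on $\zeta_t=x$ the event $\{\tau\le t\}$ has probability strictly between $0$ and $1$ on a set of $x$'s of positive Lebesgue measure; the absolute continuity of $Z$ is exactly what keeps the ``already pinned'' part of the law of $\zeta_t$ (which is $\mathbb{P}(\tau\le t)\,f$) from living on a Lebesgue-null set, while positivity of the Gaussian kernel keeps the ``not yet pinned'' part spread over all of $\mathbb{R}$, so no Borel set can separate the two. Everything else is bookkeeping built on the elementary equivalence $\{\zeta_t=Z\}=\{\tau\le t\}$ and the non-degeneracy of $\zeta^{r,z}_t$ for $t<r$ from the previous proposition.
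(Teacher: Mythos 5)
Your parts (i) and (ii) follow essentially the paper's own route: pathwise $\{\tau\le t\}\subseteq\{\zeta_t=Z\}$, then conditioning on $(\tau,Z)$ and using the absolute continuity of $\zeta_t^{r,z}$ for $t<r$ to kill $\{\zeta_t=Z,\ t<\tau\}$ in (i), and the countability of $\Delta$ to identify $\{\tau\le t\}$ with $\{\zeta_t\in\Delta\}$ up to a null set in (ii) (the paper phrases this through the identity $\mathbb{I}_{\{\zeta_t=Z\}}=\sum_i\mathbb{I}_{\{\zeta_t=z_i\}}$ a.s., which is the same computation). Part (iii) is where you genuinely diverge, and your argument is correct: the paper proves non-measurability by computing the a posteriori law of $\tau$ given $\zeta_t$ via the Bayes formula of Remark \ref{remarkBayes}, obtaining \eqref{eqtaugivenzeta}, and observing that $\mathbb{E}[\mathbb{I}_{\{\tau\le t\}}\vert\zeta_t]$ cannot coincide with the $\{0,1\}$-valued indicator when $0<F_\tau(t)<1$; you instead argue by contradiction with a hypothetical separating Borel set $B$, splitting the symmetric difference and playing off $\mathbb{P}(Z\in B)=1$ (so $B$ has positive Lebesgue measure, via independence of $\tau$ and $Z$) against the everywhere-positive Gaussian-mixture density of $\zeta_t^{r,Z}$ for $r>t$ (so $B$ must be Lebesgue-null). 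Your route is more elementary and self-contained, avoiding the regular-conditional-probability/Bayes machinery entirely; the paper's route is heavier but buys the explicit conditional law \eqref{eqtaugivenzeta}, which is reused in Proposition \ref{proplawoftauZgivenzeta_t} and beyond, so it is not wasted effort in context. One shared caveat: both proofs need a time $t$ with $0<F_\tau(t)<1$ (for a deterministic $\tau$ the event $\{\tau\le t\}$ is trivially in $\sigma(\zeta_t)\vee\mathcal{N}_p$ and the claim fails); you state this non-degeneracy assumption explicitly, the paper leaves it implicit in its choice of $t$, so this is not a gap in your proposal relative to the paper.
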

	\begin{proof}
		\begin{enumerate}
			\item[(i)] It suffices to show that, for all $t>0$, we have $\mathbb{P}\left(\{\zeta_t = Z\} \bigtriangleup \{\tau \leq t\}\right)=0$. First we have from the definition of $\zeta$ that  $\zeta_t=Z$ for $\tau \leq t$. Then $\{\tau\leq t\}\subseteq \{\zeta_t=Z\}$. On the other hand, using the formula of total probability and \eqref{condlawresptauZ}, we obtain
			\begin{align*}
			\mathbb{P}(\zeta_{t}=Z,t<\tau)&=\dint_{\mathbb{R}}\dint_{0}^{+\infty} \mathbb{P}(\zeta_{t}=Z,t<\tau | \tau=r,Z=z) \mathbb{P}_{\tau}(\mathrm{d}r)\mathbb{P}_{Z}(\mathrm{d}z)
			\\  
			&=\dint_{\mathbb{R}}\dint_{t}^{+\infty}\mathbb{P}(\zeta_{t}^{r,z}=z) \mathbb{P}_{\tau}(\mathrm{d}r)\mathbb{P}_{Z}(\mathrm{d}z) \\
			&=0,
			\end{align*}
			where in the latter equality we have used only the the fact that, for all $0 < t < r$, the random variable $\zeta_t^{r,z}$ is absolutely continuous with respect to Lebesgue measure.
			\item[(ii)] We start with the observation that for any $t>0$, the event $\{\zeta_t=Z\}$ splits up disjointly into
			\begin{equation}
			\{ \zeta_t=Z \}=\overset{\infty}{\underset{i=1}{\bigcup}}\left( \{\zeta_t=z_i\}\bigcap \{Z=z_i\}\right),\label{eqzeta=Zinfunctionofzeta=zandZ=z}
			\end{equation}
			an equivalent formulation of \eqref{eqzeta=Zinfunctionofzeta=zandZ=z} is 
			\begin{equation}
			\mathbb{I}_{\{ \zeta_t=Z \}}=\overset{\infty}{\underset{i=1}{\sum}}\,\,\mathbb{I}_{\{ \zeta_t=z_i \}}\,\mathbb{I}_{\{ Z=z_i \}}.\label{eqzeta=Zinfunctionofzeta=zandZ=zindicator}
			\end{equation}
			On the other hand we have, for all $i$,
			\begin{align}
			\mathbb{E}[\mathbb{I}_{\{ \zeta_t=z_i \}}\mathbb{I}_{\{ Z\neq z_i \}}]&=\dint_{\mathbb{R}}\dint_{0}^{+\infty} \mathbb{P}(\zeta_{t}=z_i,Z \neq z_i | \tau=r,Z=z) \mathbb{P}_{\tau}(\mathrm{d}r)\mathbb{P}_{Z}(\mathrm{d}z)\nonumber\\
			&=\dint_{\mathbb{R}}\dint_{0}^{+\infty} \mathbb{P}(\zeta_{t}^{r,z}=z_i,z \neq z_i ) \mathbb{P}_{\tau}(\mathrm{d}r)\mathbb{P}_{Z}(\mathrm{d}z)\nonumber\\
			&=\overset{\infty}{\underset{j\neq i}{\sum}}\dint_{0}^{+\infty} \mathbb{P}(\zeta_{t}^{r,z_j}=z_i) \mathbb{P}_{\tau}(\mathrm{d}r)p_j\nonumber\\
			&=F_{\tau}(t)\overset{\infty}{\underset{j\neq i}{\sum}} \mathbb{I}_{\{z_j=z_i\}}p_j+\overset{\infty}{\underset{j\neq i}{\sum}}\dint_{t}^{+\infty} \mathbb{P}(\zeta_{t}^{r,z_j}=z_i) \mathbb{P}_{\tau}(\mathrm{d}r)p_j\nonumber\\
			&=0.\label{eqzeta=zincluZ=z}
			\end{align}
			Inserting \eqref{eqzeta=zincluZ=z} into the formula \eqref{eqzeta=Zinfunctionofzeta=zandZ=zindicator} we obtain, $\mathbb{P}$-a.s.,
			\begin{equation}
			\mathbb{I}_{\{ \zeta_t=Z \}}=\overset{\infty}{\underset{i=1}{\sum}}\,\,\mathbb{I}_{\{ \zeta_t=z_i \}},\label{eqzeta=Zinfunctionofzeta=z}
			\end{equation}
			using the fact that the process $(\mathbb{I}_{\{\tau\leq t \}}, t> 0)$ is a modification of the process
			$(\mathbb{I}_{\{\zeta_t = Z \}}, t> 0)$  under the probability measure $\mathbb{P}$, we have for all $t>0$, $\mathbb{P}$-a.s.,
			\begin{equation}
			\mathbb{I}_{\{ \tau \leq t \}}=\overset{\infty}{\underset{i=1}{\sum}}\,\,\mathbb{I}_{\{ \zeta_t=z_i \}},\label{eqtau<=t=zeta=Zinfunctionofzeta=z}
			\end{equation}
			which implies that for all $t>0$, we have $\{\tau \leq t\}\in\sigma(\zeta_t)\vee \mathcal{N}_p$.
			\item[(iii)] The desired assertion will be proved once we prove that there  exist times $t>0$ such that $\mathbb{E}[\mathbb{I}_{\{\tau \leq t\}}|\zeta_t]\neq \mathbb{I}_{\{\tau \leq t\}}$. In order to prove that we must determine the law of $\tau$ given $\zeta_t$. Due to \eqref{condlawresptauZ}, we have for $B \in \mathcal{B}(\mathbb{R})$ and $t>0$,
			\begin{align*}
			\mathbb{P}(\zeta_t\in B|\tau=r)&=\mathbb{P}(\zeta_t^{r,Z}\in B)\\
			&=\mathbb{P}(Z\in B)\mathbb{I}_{\{r\leq t\}}+\dint_{\mathbb{R}}\mathbb{P}(\zeta_t^{r,z}\in B)f(z)\mathrm{d}z\mathbb{I}_{\{t<r\}}\\
			&=\dint_{B}f(x)\mathrm{d}x\mathbb{I}_{\{r\leq t\}}+\dint_{\mathbb{R}}\dint_{B} \varphi_{\zeta_t^{r,z}}(x)\mathrm{d}xf(z)\mathrm{d}z\mathbb{I}_{\{t<r\}}\\
			&=\dint_{B}q_t(r,x)\mathrm{d}x,
			\end{align*}
			where $$q_t(r,x)=f(x)\mathbb{I}_{\{r\leq t\}}+\dint_{\mathbb{R}} \varphi_{\zeta_t^{r,z}}(x)f(z)\mathrm{d}z\mathbb{I}_{\{t<r\}}.$$
			From Remark \ref{remarkBayes}, it follows that for all bounded measurable functions $g$ we have, $\mathbb{P}$-a.s.,
			\begin{align}
			\mathbb{E}[g(\tau)|\zeta_t]&=\dfrac{\dint_{0}^{+\infty}g(r)q_t(r,\zeta_t)\mathbb{P}_{\tau}(\mathrm{d}r)}{\dint_{0}^{+\infty}q_t(r,\zeta_t)\mathbb{P}_{\tau}(\mathrm{d}r)}\nonumber\\
			&=\dfrac{f(\zeta_t)\dint_{0}^{t}g(r)\mathbb{P}_{\tau}(\mathrm{d}r)+\dint_{t}^{+\infty}g(r)\dint_{\mathbb{R}} \varphi_{\zeta_t^{r,z}}(\zeta_t)f(z)\mathrm{d}z\mathbb{P}_{\tau}(\mathrm{d}r)}{f(\zeta_t)F_{\tau}(t)+\dint_{t}^{+\infty}\dint_{\mathbb{R}} \varphi_{\zeta_t^{r,z}}(\zeta_t)f(z)\mathrm{d}z\mathbb{P}_{\tau}(\mathrm{d}r)}.\label{eqtaugivenzeta}
			\end{align}
			Consequently, we have, $\mathbb{P}$-a.s.,
			$$ \mathbb{E}[\mathbb{I}_{\{\tau \leq t\}}|\zeta_t]=\dfrac{f(\zeta_t)F_{\tau}(t)}{f(\zeta_t)F_{\tau}(t)+\dint_{t}^{+\infty}\dint_{\mathbb{R}} \varphi_{\zeta_t^{r,z}}(\zeta_t)f(z)\mathrm{d}z\mathbb{P}_{\tau}(\mathrm{d}r)},$$
			hence for $t>0$ such that $0<\mathbb{P}(\tau>t)<1$, we have, $\mathbb{P}$-a.s.,
			$$\dint_{t}^{+\infty}\dint_{\mathbb{R}} \varphi_{\zeta_t^{r,z}}(\zeta_t)f(z)\mathrm{d}z\mathbb{P}_{\tau}(\mathrm{d}r)\neq 0.$$
			Then there exist certain $t>0$, such that, $\mathbb{P}$-a.s.,
			$ \mathbb{E}[\mathbb{I}_{\{\tau \leq t\}}|\zeta_t]\neq \mathbb{I}_{\{\tau \leq t\}}$,
			which ends the proof.
		\end{enumerate}
	\end{proof}		
	\begin{proposition}\label{proplawoftauZgivenzeta_t}
		Let $t>0$ and $g$ be a measurable function on $(0, \infty) \times \mathbb{R}$
		such that $g(\tau,Z)$ is integrable. 
		\begin{enumerate}
			\item[(i)] If the law of the pinning point $Z$ is absolutely continuous with respect to the Lebesgue measure. Then, $\mathbb{P}$-a.s.,
			\begin{equation}
			\mathbb{E}[g(\tau,Z)|\zeta_t]=\dfrac{f(\zeta_t)\dint_{0}^{t}g(r,\zeta_t)\mathbb{P}_{\tau}(\mathrm{d}r)+\dint_{t}^{+\infty}\dint_{\mathbb{R}}g(r,z) \varphi_{\zeta_t^{r,z}}(\zeta_t)f(z)\mathrm{d}z\mathbb{P}_{\tau}(\mathrm{d}r)}{f(\zeta_t)F_{\tau}(t)+\dint_{t}^{+\infty}\dint_{\mathbb{R}} \varphi_{\zeta_t^{r,z}}(\zeta_t)f(z)\mathrm{d}z\mathbb{P}_{\tau}(\mathrm{d}r)}.\label{eqtauZgivenzeta_tcontinuous}
			\end{equation}
			\item[(ii)] If the pinning point $Z$ has a discrete distribution. Then, $\mathbb{P}$-a.s.,
			\begin{multline}
			\mathbb{E}[g(\tau,Z)|\zeta_t]=\underset{i\geq 1}{\sum}\dint_{0}^{t}\dfrac{g(r,z_i)}{F_{\tau}(t)}\;\mathbb{P}_{\tau}(\mathrm{d}r)\;\mathbb{I}_{\{\zeta_t=z_i\}}+\\\dint_{\mathbb{R}}\dint_{t}^{+\infty}g(r,z)\phi_{\zeta_{t}^{r,z}}(\zeta_t)\mathbb{P}_{\tau}(\mathrm{d}r)\mathbb{P}_{Z}(\mathrm{d}z)\;\mathbb{I}_{\{\zeta_t\neq Z\}},\label{eqtauZgivenzeta_tdiscrete}
			\end{multline}
			where $\varphi_{\zeta_{t}^{r,z}}$ is defined by \eqref{zeta^{r,z}density} and
			\begin{align}
			\phi_{\zeta_{t}^{r,z}}(x)
			&=\dfrac{\varphi_{\zeta_{t}^{r,z}}(x)}{\sum\limits _{i\geq 1}\dint_{t}^{+\infty}\varphi_{\zeta_{t}^{r,z_i}}(x)\mathbb{P}_{\tau}(\mathrm{d}r)p_i}\mathbb{I}_{\{ t<r\}}.
			\end{align}
		\end{enumerate}
	\end{proposition}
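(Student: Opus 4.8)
The plan is to determine $\mathbb{E}[g(\tau,Z)|\zeta_t]$ by writing it as $\Phi(\zeta_t)$ for a measurable $\Phi$ and pinning down $\Phi$ through the test relation $\mathbb{E}[g(\tau,Z)\,h(\zeta_t)]=\mathbb{E}[\Phi(\zeta_t)\,h(\zeta_t)]$, required to hold for every bounded measurable $h$. First I would use Assumption \ref{hyindependenttauZW}, the joint measurability recorded in Remark \ref{remarkmeasurable}, and the identity $\mathbb{P}_{\zeta\vert\tau=r,Z=z}=\mathbb{P}_{\zeta^{r,z}}$ from \eqref{condlawresptauZ} to freeze $(\tau,Z)=(r,z)$ and rewrite the left-hand side as
\[
\mathbb{E}[g(\tau,Z)\,h(\zeta_t)]=\int_{(0,\infty)}\int_{\mathbb{R}}g(r,z)\,\mathbb{E}\!\left[h(\zeta_t^{r,z})\right]\mathbb{P}_{\tau}(\hbox{d}r)\,\mathbb{P}_{Z}(\hbox{d}z),
\]
the interchange being legitimate since running the same computation with $\abs{g}$ yields a finite quantity ($g(\tau,Z)\in L^1$, $h$ bounded). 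Then I would split the $r$-integral at $t$: for $r>t$ one has $\mathbb{E}[h(\zeta_t^{r,z})]=\int_{\mathbb{R}}h(x)\varphi_{\zeta_t^{r,z}}(x)\,\hbox{d}x$ with $\varphi_{\zeta_t^{r,z}}$ given by \eqref{zeta^{r,z}density}, whereas for $r\le t$ the bridge $\zeta^{r,z}$ is already pinned, $\zeta_t^{r,z}\equiv z$, so $\mathbb{E}[h(\zeta_t^{r,z})]=h(z)$. Taking $g\equiv1$ in the same identity produces the law of $\zeta_t$, the reference measure against which $\Phi$ is to be read off.

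For part (i), I would insert $\mathbb{P}_Z(\hbox{d}z)=f(z)\,\hbox{d}z$ and, in the $r\le t$ piece, rename the dummy variable $z$ as $x$; the left-hand side then becomes $\int_{\mathbb{R}}h(x)\,N_t(x)\,\hbox{d}x$ with $N_t$ exactly the numerator of \eqref{eqtauZgivenzeta_tcontinuous}, while the choice $g\equiv1$ shows that $\zeta_t$ has density $D_t$ equal to the denominator of \eqref{eqtauZgivenzeta_tcontinuous} with respect to Lebesgue measure (this $D_t$ is the $r$-integral of the kernel $q_t$ appearing in the proof of Proposition \ref{propmeasurablity}(iii)). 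Since $\mathbb{E}[\Phi(\zeta_t)h(\zeta_t)]=\int_{\mathbb{R}}h(x)\,\Phi(x)\,D_t(x)\,\hbox{d}x$, comparing the two expressions over all bounded $h$ forces $\Phi(x)D_t(x)=N_t(x)$ for Lebesgue-a.e.\ $x$, and because $\mathbb{P}_{\zeta_t}(\{D_t=0\})=0$ this gives $\Phi(\zeta_t)=N_t(\zeta_t)/D_t(\zeta_t)$, i.e.\ \eqref{eqtauZgivenzeta_tcontinuous}.

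For part (ii), the same steps with $\mathbb{P}_Z=\sum_{i\ge1}p_i\delta_{z_i}$ show that the law of $\zeta_t$ now splits into an atomic part $\sum_{i\ge1}p_iF_{\tau}(t)\,\delta_{z_i}$, coming entirely from the $r\le t$ contribution, plus an absolutely continuous part with density $D_t(x)=\sum_{i\ge1}p_i\int_{(t,\infty)}\varphi_{\zeta_t^{r,z_i}}(x)\,\mathbb{P}_{\tau}(\hbox{d}r)$, which is precisely the denominator in the definition of $\phi_{\zeta_t^{r,z}}$. Writing $\mathbb{E}[\Phi(\zeta_t)h(\zeta_t)]=\sum_{i\ge1}\Phi(z_i)h(z_i)\,p_iF_{\tau}(t)+\int_{\mathbb{R}}\Phi(x)h(x)\,D_t(x)\,\hbox{d}x$ and matching the two sides separately on the atoms (taking $h=\mathbb{I}_{\{z_i\}}$) and on the absolutely continuous part, I would obtain $\Phi(z_i)=\frac{1}{F_{\tau}(t)}\int_{(0,t]}g(r,z_i)\,\mathbb{P}_{\tau}(\hbox{d}r)$ and, for $x\notin\Delta$, $\Phi(x)=\Big(\sum_{i\ge1}p_i\int_{(t,\infty)}g(r,z_i)\,\varphi_{\zeta_t^{r,z_i}}(x)\,\mathbb{P}_{\tau}(\hbox{d}r)\Big)/D_t(x)$. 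Recognising $\varphi_{\zeta_t^{r,z}}(x)/D_t(x)=\phi_{\zeta_t^{r,z}}(x)$ on $\{t<r\}$, rewriting the $i$-sum as an integral against $\mathbb{P}_Z$, and invoking $\mathbb{I}_{\{\zeta_t\neq Z\}}=\mathbb{I}_{\{\zeta_t\notin\Delta\}}$ $\mathbb{P}$-a.s.\ (Proposition \ref{propmeasurablity}(i)--(ii)), which is what makes the indicator in \eqref{eqtauZgivenzeta_tdiscrete} a legitimate function of $\zeta_t$, one arrives at \eqref{eqtauZgivenzeta_tdiscrete}.

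The step I expect to be the main obstacle is handling the fact that the conditional law $\mathbb{P}_{\zeta_t\vert\tau=r,Z=z}$ is absolutely continuous for $r>t$ but degenerate (equal to $\delta_z$) for $r\le t$, so that no $\sigma$-finite measure dominates it uniformly in $(r,z)$ and the Bayes formula of Remark \ref{remarkBayes} cannot be applied directly with $\theta=(\tau,Z)$; splitting the time integral at $t$ and using $\zeta_t=Z$ on $\{\tau\le t\}$ to eliminate the pinning point there is exactly the manoeuvre that circumvents this. In the discrete case there is the additional bookkeeping of keeping the atoms of $\mathbb{P}_{\zeta_t}$ carefully separated from its absolutely continuous part when reading off $\Phi$; everything else is routine Fubini, which I would justify throughout by first replacing $g$ with $\abs{g}$.
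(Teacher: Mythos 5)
Your proof is correct, but it reaches the result by a genuinely different route from the paper. The paper proves (i) by the two-step disintegration \eqref{eqargumentconditioanal}: it first uses the conditional law of $\tau$ given $\zeta_t$ (formula \eqref{eqtaugivenzeta}, obtained from the Bayes formula of Remark \ref{remarkBayes} with $\theta=\tau$, which is applicable there because, $Z$ being absolutely continuous, the law of $\zeta_t$ given $\tau=r$ has a Lebesgue density for every $r$), then computes $\mathbb{E}[g(r,Z)\vert\zeta_t^{r,Z}=x]$ by a second application of Bayes, and substitutes; for (ii) it exhibits the kernel $q_t(x,r,z)$ of the joint law of $(\zeta_t,\tau,Z)$ with respect to $\mu(\hbox{d}x)=\hbox{d}x+\sum_{i}\delta_{z_i}(\hbox{d}x)$ and invokes Remark \ref{remarkBayes} with $\theta=(\tau,Z)$. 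You instead verify the defining relation $\mathbb{E}[g(\tau,Z)h(\zeta_t)]=\mathbb{E}[\Phi(\zeta_t)h(\zeta_t)]$ directly for all bounded measurable $h$, computing both sides by freezing $(\tau,Z)$ through \eqref{condlawresptauZ}, splitting at $r\leq t$ versus $t<r$, and reading off the law of $\zeta_t$ (a Lebesgue density equal to the denominator in case (i); the atomic measure $\sum_i p_iF_{\tau}(t)\delta_{z_i}$ plus an absolutely continuous part with density $D_t$ in case (ii)). This amounts to re-deriving the Bayes formula in this concrete setting: it treats (i) and (ii) uniformly, needs no dominating measure for the conditional laws of $\zeta_t$ given $(\tau,Z)$ — which, as you correctly observe, is exactly what prevents a direct appeal to Remark \ref{remarkBayes} with $\theta=(\tau,Z)$ in case (i) and forces the paper's two-step argument there — and your points about the a.s.\ well-definedness of the ratio ($\{D_t=0\}$ is $\mathbb{P}_{\zeta_t}$-null) and about $\mathbb{I}_{\{\zeta_t\neq Z\}}=\mathbb{I}_{\{\zeta_t\notin\Delta\}}$ $\mathbb{P}$-a.s.\ (the paper's identity \eqref{eqzeta=Zinfunctionofzeta=z}) close the same small gaps the paper closes. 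What the paper's route buys is the intermediate objects (notably the posterior law \eqref{eqtaugivenzeta} of $\tau$ given $\zeta_t$ and the kernel $q_t$) which are reused elsewhere in the article; what yours buys is a single, more elementary argument covering both cases, at the cost of the extra bookkeeping of separating the atomic and absolutely continuous parts of $\mathbb{P}_{\zeta_t}$ when identifying $\Phi$ (and, like the paper, reading the atomic term with the convention that it is irrelevant when $F_{\tau}(t)=0$, since then $\mathbb{I}_{\{\zeta_t=z_i\}}=0$ a.s.).
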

	\begin{proof}
		\begin{enumerate}
			\item[(i)] From \eqref{eqargumentconditioanal}, it follows that
			\begin{align}
			\mathbb{E}[g(\tau,Z)|\zeta_t=x]&=\dint_{0}^{+\infty}\mathbb{E}[g(\tau,Z)|\zeta_t=x,\tau=r]\,\mathbb{P}(\tau \in \mathrm{d}r|\zeta_t=x)\nonumber\\
			&=\dint_{0}^{+\infty}\mathbb{E}[g(r,Z)|\zeta_t^{r,Z}=x]\,\mathbb{P}(\tau \in \mathrm{d}r|\zeta_t=x).\label{eqtauZgivenzetaargument}
			\end{align}
			On the other hand, using the fact that $\zeta_t^{r,Z}=Z$ on $\{r \leq t\}$ and Bayes theorem, we have 
			\begin{equation}
			\mathbb{E}[g(r,Z)|\zeta_t^{r,Z}=x]=g(r,x)\mathbb{I}_{\{ r\leq t\}}+\dfrac{\dint_{\mathbb{R}}g(r,z)\varphi_{\zeta_{t}^{r,z}}(x)f(z)\mathrm{d}z}{\dint_{\mathbb{R}}\varphi_{\zeta_{t}^{r,z}}(x)f(z)\mathrm{d}z} \mathbb{I}_{\{ t<r\}}.\label{eqg(r,Z)givenzeta^r,Z}
			\end{equation}
			Hence, we obtain \eqref{eqtauZgivenzeta_tcontinuous} by substituting \eqref{eqtaugivenzeta} and \eqref{eqg(r,Z)givenzeta^r,Z} into \eqref{eqtauZgivenzetaargument}.
			\item[(ii)] In order to get this equality, it is convenient to use Bayes theorem. We have for all $t>0$,
			\begin{equation}
			\mathbb{P}(\zeta_t\in \mathrm{d}x,\tau\in \mathrm{d}r,Z\in \mathrm{d}z)=q_t(x,r,z)\mu(\mathrm{d}x)\mathbb{P}_{\tau}(\mathrm{d}r)\mathbb{P}_{Z}(\mathrm{d}z),\label{eqthelawofzetatauZdiscretecase}
			\end{equation}
			where
			\begin{equation}
			q_t(x,r,z)=\sum\limits _{i\geq 1}\left(\mathbb{I}_{\{x=z_i\}}\;\mathbb{I}_{\{z=z_i\}}\;\mathbb{I}_{\{r\leq t\}}\right)+\varphi_{\zeta_t^{r,z}}(x)\;\prod\limits _{i\geq 1}\mathbb{I}_{\{x\neq z_i\}}\mathbb{I}_{\{ t<r\}},\label{eqq_t(r,x,z)}
			\end{equation}
			and
			$$ \mu(\mathrm{d}x)=\mathrm{d}x+\sum\limits _{i\geq 1}\delta_{z_i}(\mathrm{d}x). $$	
			Indeed, Let $H$ be a bounded measurable function defined on $\mathbb{R}\times(0,\infty)\times\mathbb{R}$. For all $t>0$, we have 
			\begin{align*}
			\mathbb{E}[H(\zeta_t,\tau,Z)]&=\dint_{\mathbb{R}}\dint_{0}^{+\infty}\mathbb{E}[H(\zeta_{t},\tau,Z)\vert \tau=r, Z=z]\mathbb{P}_{\tau}(\mathrm{d}r)\mathbb{P}_{Z}(\mathrm{d}z)\nonumber\\
			&=\sum\limits _{i\geq 1}\left(\dint_{0}^{t}H(z_i,r,z_i)\mathbb{P}_{\tau}(\mathrm{d}r)\;+\dint_{t}^{+\infty}\mathbb{E}[H(\zeta_{t}^{r,z_i},r,z_i)]\mathbb{P}_{\tau}(\mathrm{d}r)\right)p_i\nonumber\\
			&=\sum\limits _{i\geq 1}\left(\dint_{0}^{t}H(z_i,r,z_i)\mathbb{P}_{\tau}(\mathrm{d}r)+\dint_{t}^{+\infty}\dint_{\mathbb{R}}H(x,r,z_i)\varphi_{\zeta_{t}^{r,z_i}}(x)\mathrm{d}x\mathbb{P}_{\tau}(\mathrm{d}r)\right)p_i\nonumber\\
			&=\dint_{\mathbb{R}}\dint_{0}^{+\infty}\dint_{\mathbb{R}}H(x,r,z)q_{t}(x,r,z)\mu(\mathrm{d}x)\mathbb{P}_{\tau}(\mathrm{d}r)\mathbb{P}_{Z}(\mathrm{d}z),
			\end{align*}
			using \eqref{eqthelawofzetatauZdiscretecase}, it is easy to see that the conditional law of $\zeta_t$ given $(\tau,Z)$ is given by
			\begin{equation}
			\mathbb{P}(\zeta_t\in \mathrm{d}x|\tau=r,Z=z)=q_{t}(x,r,z)\mu(\mathrm{d}x).
			\end{equation}
			Since the function $q_{t}$ is a non-negative and jointly measurable and $\mu$ is a $\sigma$-finite measure on $\mathbb{R}$, we conclude from Remark \ref{remarkBayes} that, $\mathbb{P}$-a.s.,
			\begin{align}
			\mathbb{E}[g(\tau,Z)\vert\zeta_{t}]=\dfrac{\dint_{\mathbb{R}}\dint_{0}^{+\infty}g(r,z)q_{t}(\zeta_t,r,z)\mathbb{P}_{\tau}(\mathrm{d}r)\mathbb{P}_{Z}(\mathrm{d}z)}{\dint_{\mathbb{R}}\dint_{0}^{+\infty}q_{t}(\zeta_t,r,z)\mathbb{P}_{\tau}(\mathrm{d}r)\mathbb{P}_{Z}(\mathrm{d}z)}.\label{eqbayestau,Zgivenzeta}
			\end{align}
			By a simple integration we obtain
			\begin{align*}
			\dint_{\mathbb{R}}\dint_{0}^{+\infty}g(r,z)q_{t}(\zeta_t,r,z)\mathbb{P}_{\tau}(\mathrm{d}r)\mathbb{P}_{Z}(\mathrm{d}z)&=\sum\limits _{i\geq 1}\dint_{0}^{t}g(r,z_i)\;\mathbb{P}_{\tau}(\mathrm{d}r)\;p_i\mathbb{I}_{\{\zeta_t=z_i\}}\\
			&+\dint_{\mathbb{R}}\dint_{t}^{+\infty}g(r,z)\varphi_{\zeta_{t}^{r,z}}(\zeta_t)\mathbb{P}_{\tau}(\mathrm{d}r)\mathbb{P}_{Z}(\mathrm{d}z)\prod\limits _{i\geq 1}\mathbb{I}_{\{\zeta_t\neq z_i\}}\\
			\end{align*}
			and
			\begin{align*}
			\dint_{\mathbb{R}}\dint_{0}^{+\infty}q_{t}(\zeta_t,r,z)\mathbb{P}_{\tau}(\mathrm{d}r)\mathbb{P}_{Z}(\mathrm{d}z)&=F_{\tau}(t)\sum\limits _{i\geq 1}\;p_i\mathbb{I}_{\{\zeta_t=z_i\}}\\
			&+\dint_{\mathbb{R}}\dint_{t}^{+\infty}\varphi_{\zeta_{t}^{r,z}}(\zeta_t)\mathbb{P}_{\tau}(\mathrm{d}r)\mathbb{P}_{Z}(\mathrm{d}z)\prod\limits _{i\geq 1}\mathbb{I}_{\{\zeta_t\neq z_i\}}.
			\end{align*}
			Combining all this leads to the following formula
			\begin{multline}
			\mathbb{E}[g(\tau,Z)\vert\zeta_{t}]=\sum\limits _{i\geq 1}\dint_{0}^{t}\dfrac{g(r,z_i)}{F_{\tau}(t)}\;\mathbb{P}_{\tau}(\mathrm{d}r)\;\mathbb{I}_{\{\zeta_t=z_i\}}\;\\+\dint_{\mathbb{R}}\dint_{t}^{+\infty}g(r,z)\phi_{\zeta_{t}^{r,z}}(\zeta_t)\mathbb{P}_{\tau}(\mathrm{d}r)\mathbb{P}_{Z}(\mathrm{d}z)\;\prod\limits _{i\geq 1}\mathbb{I}_{\{\zeta_t\neq z_i\}}
			\end{multline}
			we get the formula \eqref{eqtauZgivenzeta_tdiscrete} by using the fact that $\prod\limits _{i\geq 1}\mathbb{I}_{\{\zeta_t\neq z_i\}}=\mathbb{I}_{\{\zeta_t\neq Z\}}$, $\mathbb{P}$-a.s., which is an immediate consequence of \eqref{eqzeta=Zinfunctionofzeta=z}.
		\end{enumerate}
		
	\end{proof}
	\begin{corollary}\label{corlawoftauZzeta_ugivenzeta_t}
		Let $0<t<u$ and $g$ be a measurable function on $(0, \infty) \times \mathbb{R} \times \mathbb{R}$
		such that $g(\tau,Z,\zeta_u)$ is integrable.
		\begin{enumerate}
			\item[(i)] If the law of the pinning point $Z$ is absolutely continuous with respect to the Lebesgue measure. Then, $\mathbb{P}$-a.s.,
			\begin{equation}
			\mathbb{E}[g(\tau,Z,\zeta_u)|\zeta_t]=\dfrac{A_{t,u}(\zeta_t)}{f(\zeta_t)F_{\tau}(t)+\dint_{(t,\infty)}\dint_{\mathbb{R}} \varphi_{\zeta_t^{r,z}}(\zeta_t)f(z)dz\mathbb{P}_{\tau}(dr)},\label{eqlawoftauZzeta_ugivenzeta_tcontinuous}
			\end{equation}
			where
			\begin{multline}
			A_{t,u}(x)=f(x)\dint_{0}^{t}g(r,x,x)\mathbb{P}_{\tau}(\mathrm{d}r)+\dint_{t}^{u}\dint_{\mathbb{R}}g(r,z,z) \varphi_{\zeta_t^{r,z}}(x)f(z)\mathrm{d}z\mathbb{P}_{\tau}(\mathrm{d}r)\\
			+\dint_{u}^{+\infty}\dint_{\mathbb{R}}\dint_{\mathbb{R}}g(r,z,y)\varphi_{\zeta_u^{r,z},\zeta_t^{r,z}}(y,x)f(z)\mathrm{d}z\mathbb{P}_{\tau}(\mathrm{d}r).
			\end{multline}
			\item[(ii)] If the pinning point $Z$ has a discrete distribution. Then, $\mathbb{P}$-a.s.,
			\begin{multline}
			\mathbb{E}[g(\tau,Z,\zeta_u)|\zeta_t]=\underset{i\geq 1}{\sum}\dint_{0}^{t}\dfrac{g(r,z_i,z_i)}{F_{\tau}(t)}\;\mathbb{P}_{\tau}(\mathrm{d}r)\;\mathbb{I}_{\{\zeta_t=z_i\}}+\bigg[\dint_{\mathbb{R}}\dint_{t}^{u}g(r,z,z)\phi_{\zeta_{t}^{r,z}}(\zeta_t)\mathbb{P}_{\tau}(\mathrm{d}r)\mathbb{P}_{Z}(\mathrm{d}z)\\
			+\dint_{\mathbb{R}}\dint_{u}^{+\infty}\dint_{\mathbb{R}}g(r,z,y)[\mathbb{P}(\zeta_u^{r,z}\in \mathrm{d}y\vert \zeta_t^{r,z}=x)]_{x=\zeta_t}\,\phi_{\zeta_{t}^{r,z}}(\zeta_t)\mathbb{P}_{\tau}(\mathrm{d}r)\mathbb{P}_{Z}(\mathrm{d}z)\bigg]\mathbb{I}_{\{\zeta_t\neq Z\}}.\label{eqlawoftauZzeta_ugivenzeta_tdiscrete}
			\end{multline}
		\end{enumerate}
	\end{corollary}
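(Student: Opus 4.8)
The plan is to reduce everything to Proposition~\ref{proplawoftauZgivenzeta_t} and to the Markov property of the deterministic-length bridge. First I would condition on the random length: applying the disintegration identity \eqref{eqargumentconditioanal} to the triple $\big((Z,\zeta_u),\tau,\zeta_t\big)$ and then \eqref{condlawresptauZ} to replace $\zeta$ by $\zeta^{r,Z}$ on $\{\tau=r\}$, one obtains
\begin{equation*}
\mathbb{E}[g(\tau,Z,\zeta_u)|\zeta_t=x]=\dint_{(0,\infty)}\mathbb{E}[g(r,Z,\zeta_u^{r,Z})|\zeta_t^{r,Z}=x]\,\mathbb{P}(\tau\in \hbox{d}r|\zeta_t=x),
\end{equation*}
which is the analogue of \eqref{eqtauZgivenzetaargument} with the extra coordinate $\zeta_u$ carried along.

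Next I would evaluate the inner conditional expectation $\mathbb{E}[g(r,Z,\zeta_u^{r,Z})|\zeta_t^{r,Z}=x]$ according to the position of $r$ relative to $t$ and $u$. On $\{r\leq t\}$ one has $\zeta_t^{r,Z}=\zeta_u^{r,Z}=Z$, so the inner expectation is simply $g(r,x,x)$. On $\{t<r\leq u\}$ one still has $\zeta_u^{r,Z}=Z$ while $\zeta_t^{r,Z}$ is non-degenerate with density $\varphi_{\zeta_t^{r,z}}$; conditioning further on $Z=z$ and using Bayes' theorem (Remark~\ref{remarkBayes}), exactly as in \eqref{eqg(r,Z)givenzeta^r,Z}, produces the $g(r,z,z)$ term. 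On $\{u<r\}$ both $\zeta_t^{r,Z}$ and $\zeta_u^{r,Z}$ are non-degenerate; here I would condition on $Z=z$ and invoke the Markov property of $\zeta^{r,z}$ together with its transition density \eqref{eqtransitionlawofX^{r,z}lévy}, so that the conditional law of $\zeta_u^{r,z}$ given $\zeta_t^{r,z}=x$ is $\mathbb{P}(\zeta_u^{r,z}\in \hbox{d}y|\zeta_t^{r,z}=x)$, and then apply Bayes once more to integrate out $z$ against $\varphi_{\zeta_t^{r,z}}(x)f(z)\hbox{d}z$.

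To finish the continuous case I would substitute the expression for $\mathbb{P}(\tau\in \hbox{d}r|\zeta_t=x)$ already derived inside the proof of Proposition~\ref{propmeasurablity}(iii) — namely that its density against $\mathbb{P}_\tau$ is proportional to $q_t(r,x)=f(x)\mathbb{I}_{\{r\leq t\}}+\dint_{\mathbb{R}}\varphi_{\zeta_t^{r,z}}(x)f(z)\hbox{d}z\,\mathbb{I}_{\{t<r\}}$, with normalising constant $f(x)F_\tau(t)+\dint_{(t,\infty)}\dint_{\mathbb{R}}\varphi_{\zeta_t^{r,z}}(x)f(z)\hbox{d}z\,\mathbb{P}_\tau(\hbox{d}r)$ — and collect the three pieces; the denominator then emerges exactly as in \eqref{eqtauZgivenzeta_tcontinuous} and the numerator is $A_{t,u}(\zeta_t)$. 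For the discrete case it is cleaner to mimic the proof of Proposition~\ref{proplawoftauZgivenzeta_t}(ii): write down the joint law $\mathbb{P}(\zeta_t\in \hbox{d}x,\zeta_u\in \hbox{d}y,\tau\in \hbox{d}r,Z\in \hbox{d}z)$, obtained exactly like \eqref{eqthelawofzetatauZdiscretecase} but carrying an additional factor $\delta_x(\hbox{d}y)$ on $\{t<r\leq u\}$ and $\mathbb{P}(\zeta_u^{r,z}\in \hbox{d}y|\zeta_t^{r,z}=x)$ on $\{u<r\}$, then apply Remark~\ref{remarkBayes} and use $\prod_{i\geq 1}\mathbb{I}_{\{\zeta_t\neq z_i\}}=\mathbb{I}_{\{\zeta_t\neq Z\}}$ $\mathbb{P}$-a.s.

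The main obstacle is the bookkeeping of the degeneracies: on $\{r\leq t\}$ the pair $(\zeta_t,\zeta_u)$ collapses onto the diagonal and onto $Z$, on $\{t<r\leq u\}$ only $\zeta_u$ collapses onto $Z$, and only on $\{u<r\}$ is the conditional law of $(\zeta_t,\zeta_u,Z)$ genuinely absolutely continuous in all of its arguments. One must therefore be careful to write the joint laws with respect to a dominating measure (Lebesgue measure on the relevant coordinates, plus atoms at the $z_i$ in the discrete case) for which the "densities" are jointly measurable and non-negative, so that the Bayes formula of Remark~\ref{remarkBayes} genuinely applies; once this is arranged, the case $u<r$ is dispatched purely by the Markov property \eqref{eqtransitionlawofX^{r,z}lévy} of $\zeta^{r,z}$, and the two remaining cases are immediate.
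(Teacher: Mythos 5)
Your proposal is correct and is essentially the paper's argument: the paper conditions on $(\tau,Z)$ jointly and quotes Proposition \ref{proplawoftauZgivenzeta_t} for the law of $(\tau,Z)$ given $\zeta_t$, while you disintegrate over $\tau$ alone in case (i) and rebuild the joint law from scratch in case (ii); the case split $r\le t$, $t<r\le u$, $u<r$ and the use of the Markov transition law of $\zeta^{r,z}$ are the same, so the two routes amount to the same computation organized slightly differently. The only slip is in your discrete-case joint law: on $\{t<r\le u\}$ the extra factor must be $\delta_{z}(\hbox{d}y)$, not $\delta_{x}(\hbox{d}y)$, since it is $\zeta_u$ that collapses onto $Z$ there (as you yourself note), and this is what produces the $g(r,z,z)$ term in \eqref{eqlawoftauZzeta_ugivenzeta_tdiscrete} rather than $g(r,z,x)$.
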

	\begin{proof}
		The proof is an immediate consequence of Proposition \ref{proplawoftauZgivenzeta_t} and the following observation:
		\begin{align*}
		\mathbb{E}[g(\tau,Z,\zeta_u)|\zeta_t=x]&=\dint_{\mathbb{R}}\dint_{0}^{+\infty}\mathbb{E}[g(\tau,Z,\zeta_u)|\zeta_t=x,\tau=r,Z=z]\,\mathbb{P}(\tau \in \mathrm{d}r,Z\in \mathrm{d}z|\zeta_t=x)\\
		&=\dint_{\mathbb{R}}\dint_{0}^{+\infty}\mathbb{E}[g(r,z,\zeta_u^{r,z})|\zeta_t^{r,z}=x]\,\mathbb{P}(\tau \in \mathrm{d}r,Z\in \mathrm{d}z|\zeta_t=x).
		\end{align*}
	\end{proof}
	We have the following auxiliary result:
	\begin{lemma}\label{lemmamarkov}
		Assume that the law of $Z$ is absolutely continuous with respect to the Lebesgue measure and that there exists $t_1>0$ such that $F_{\tau}(t_1)=0$. Then for all $t_1<t_2<u$ and for every bounded measurable function $g$, we have, $\mathbb{P}$-a.s.,
		\begin{equation}
		\mathbb{E}[g(\zeta_u)|\zeta_{t_{1}},\zeta_{t_{2}}]=\dfrac{A_{t_1,t_2,u}(\zeta_{t_{1}},\zeta_{t_{2}})}{B_{t_1,t_2}(\zeta_{t_{1}},\zeta_{t_{2}})},\label{eqzeta_ugivent_1t_2}
		\end{equation}
		where
		\begin{align}
		A_{t_1,t_2,u}(\zeta_{t_{1}},\zeta_{t_{2}})&=g(\zeta_{t_2})f(\zeta_{t_2})\Psi_{t_1,t_2}(\zeta_{t_1},\zeta_{t_2})+
		p(t_2-t_1,\zeta_{t_{2}}-\zeta_{t_{1}})\bigg[\dint_{\mathbb{R}}g(z)\Psi_{t_2,u}(\zeta_{t_2},z)\,f(z)\mathrm{d}z\nonumber\\
		&+\dint_{\mathbb{R}}g(y)\dint_{u}^{+\infty}\Phi_{u}(r,y)\,\mathbb{P}_{\tau}(\mathrm{d}r)\,p(u-t_2,y-\zeta_{t_2})\,\mathrm{d}y\bigg],\label{eqA_{t_1,t_2,u}}
		\end{align}
		and
		\begin{equation}
		B_{t_1,t_2}(\zeta_{t_{1}},\zeta_{t_{2}})=f(\zeta_{t_2})\Psi_{t_1,t_2}(\zeta_{t_1},\zeta_{t_2})+p(t_2-t_1,\zeta_{t_{2}}-\zeta_{t_{1}})\dint_{t_2}^{+\infty}\Phi_{t_2}(r,\zeta_{t_2})\mathbb{P}_{\tau}(\mathrm{d}r).\label{eqB_{t_1,t_2}}
		\end{equation}
		Here, 
		\begin{equation}
		\Phi_{s}(t,y)=\dint_{\mathbb{R}}\frac{p(t-s,z-y)}{p(t,z)}\,f(z)\mathrm{d}z,\,s<t,\,\,y\in\mathbb{R},\label{eqPhi_s}
		\end{equation}	
		\begin{equation}
		\Psi_{s,t}(x,y)=\dint_{s}^{t}\frac{p(r-s,y-x)}{p(r,y)}\mathbb{P}_{\tau}(\mathrm{d}r),\,\,x,\,y\in\mathbb{R},\label{eqPsi_st}
		\end{equation}
		and
		\begin{equation}
		\Psi_{s}(x,y)=\dint_{s}^{+\infty}\frac{p(r-s,y-x)}{p(r,y)}\mathbb{P}_{\tau}(\mathrm{d}r),\,\,x,\,y\in\mathbb{R}.\label{eqPsi_s}
		\end{equation}
	\end{lemma}
	\begin{proof}
		The procedure is to find the law of $\zeta_u$ given $(\zeta_{t_1},\zeta_{t_2})$, for $0<t_1<t_2<u$ such that $F_{\tau}(t_1)=0$. For this purpose, let us first compute the law of $\tau$ given $(\zeta_{t_1},\zeta_{t_2})$. To do this, we need to use Bayes theorem. Let $B_1,B_2 \in \mathcal{B}(\mathbb{R})$, using the equality \eqref{condlawresptauZ}, the formula of total probability and the fact that $\mathbb{P}(\tau\leq t_1)=0$, we obtain
		\[
		\begin{array}{c}
		\mathbb{P}\left((\zeta_{t_1},\zeta_{t_2})\in B_1 \times B_2\vert \tau=r\right)=\mathbb{P}\left((\zeta_{t_{1}}^{r},\zeta_{t_{2}}^{r})\in B_{1}\times B_{2}\right)=\dint_{\mathbb{R}}\mathbb{P}\left((\zeta_{t_{1}}^{r,z},\zeta_{t_{2}}^{r,z})\in B_{1}\times B_{2}\right)\mathbb{P}_{Z}(\mathrm{d}z)
		\\ \\ =\dint_{B_{1}\times B_{2}}q_{t_{1},t_{2}}(r,x_{1},x_{2})\mathrm{d}x_1\,\mathrm{d}x_2.
		
		\end{array}
		\]
		where
		\begin{align*}
		q_{t_1,t_2}(r,x_1,x_2)&=\varphi_{\zeta_{t_1}^{r,x_2}}(x_1)f(x_2)\mathbb{I}_{\{t_1<r\leq t_2\}}+\dint_{\mathbb{R}}\varphi_{\zeta_{t_1}^{r,z},\zeta_{t_2}^{r,z}}(x_1,x_2)f(z)\mathrm{d}z\mathbb{I}_{\{t_2<r\}}\\
		&=p(t_1,x_1)\bigg[\dfrac{p(r-t_1,x_2-x_1)}{p(r,x_2)}f(x_2)\mathbb{I}_{\{t_1<r\leq t_2\}}+\Phi_{t_2}(r,x_2)p(t_2-t_1,x_2-x_1)\mathbb{I}_{\{t_2<r\}}\bigg].
		\end{align*}
		Then from Remark \ref{remarkBayes}, we have for all $(x_1,x_2)\in \mathbb{R}^2$,
		\begin{align}
		&\mathbb{E}[g(\tau)|\zeta_{t_1}=x_1,\zeta_{t_2}=x_2]=\dfrac{\dint_{t_1}^{+\infty}g(r)q_{t_1,t_2}(r,x_1,x_2)\mathbb{P}_{\tau}(\mathrm{d}r)}{\dint_{t_1}^{+\infty}q_{t_1,t_2}(r,x_1,x_2)\mathbb{P}_{\tau}(\mathrm{d}r)}\nonumber\\
		&=\dfrac{f(x_2)\dint_{t_1}^{t_2}g(r)\dfrac{p(r-t_1,x_2-x_1)}{p(r,x_2)}\mathbb{P}_{\tau}(\mathrm{d}r)+p(t_2-t_1,x_2-x_1)\dint_{t_2}^{+\infty}g(r)\Phi_{t_2}(r,x_2)\mathbb{P}_{\tau}(\mathrm{d}r)}{f(x_2)\Psi_{t_1,t_2}(x_1,x_2)+p(t_2-t_1,x_2-x_1)\dint_{t_2}^{+\infty}\Phi_{t_2}(r,x_2)\mathbb{P}_{\tau}(\mathrm{d}r)}.\label{eqtaugivenzetat-1,t-2}
		\end{align}
		Applying \eqref{eqargumentconditioanal}, we obtain
		\begin{align}
		\mathbb{E}[g(\zeta_{u})\vert\zeta_{t_{1}}=x_1,\zeta_{t_{2}}=x_2]&=\dint_{(t_1,\infty)} \mathbb{E}[g(\zeta_{u})\vert\zeta_{t_{1}}=x_1,\zeta_{t_{2}}=x_2,\tau=r]\mathbb{P}(\tau\in \mathrm{d}r|\zeta_{t_{1}}=x_1,\zeta_{t_{2}}=x_2)\nonumber\\
		&=\dint_{(t_1,\infty)} \mathbb{E}[g(\zeta_{u}^{r,Z})\vert\zeta_{t_{1}}^{r,Z}=x_1,\zeta_{t_{2}}^{r,Z}=x_2]\mathbb{P}(\tau\in \mathrm{d}r|\zeta_{t_{1}}=x_1,\zeta_{t_{2}}=x_2)\nonumber\\
		&=\dint_{(t_1,\infty)} \mathbb{E}[g(\zeta_{u}^{r,Z})\vert\zeta_{t_{2}}^{r,Z}=x_2]\mathbb{P}(\tau\in \mathrm{d}r|\zeta_{t_{1}}=x_1,\zeta_{t_{2}}=x_2).\label{eqzeta_ugivenzetat-1,t-2}
		\end{align}
		The latter equality uses the fact that $\zeta_t^{r,Z}$ is a Markov process with respect to its natural filtration, see Proposition 3.4 in  \cite{HHM}. On the other hand it is clear that
		\begin{equation*}
		\mathbb{E}[g(\zeta_{u}^{r,Z})\vert\zeta_{t_{2}}^{r,Z}=x_2]=g(x_2)\mathbb{I}_{\{t_1<r\leq t_2\}}+\mathbb{E}[g(Z)|\zeta_{t_2}^{r,Z}=x_2]\mathbb{I}_{\{t_2<r\leq u\}}+\mathbb{E}[g(\zeta_u^{r,Z})|\zeta_{t_2}^{r,Z}=x_2]\mathbb{I}_{\{u<r\}}.
		\end{equation*}
		Using Proposition 3.4 in \cite{HHM}, we obtain that
		\begin{align}
		\mathbb{E}[g(\zeta_{u}^{r,Z})\vert\zeta_{t_{2}}^{r,Z}=x_2]&=g(x_2)\mathbb{I}_{\{t_1<r\leq t_2\}}+\dfrac{\dint_{\mathbb{R}}g(z)\frac{p(r-t_2,z-x_2)}{p(r,z)}f(z)\mathrm{d}z}{\Phi_{t_2}(r,x_2)}\,\mathbb{I}_{\{t_2<r\leq u\}}\nonumber\\
		&+\dint_{\mathbb{R}}g(y)\dfrac{\Phi_{u}(r,y)}{\Phi_{t_2}(r,x_2)}p(u-t_2,y-x_2)\mathrm{d}y\mathbb{I}_{\{u<r\}}.\label{eqsplit}
		\end{align}
		We obtain \eqref{eqzeta_ugivent_1t_2} by inserting \eqref{eqsplit} into the formula  \eqref{eqzeta_ugivenzetat-1,t-2} and applying \eqref{eqtaugivenzetat-1,t-2}.
	\end{proof}
	\begin{remark}
		In order to determine the law of $\tau$ given $(\zeta_{t_1},\zeta_{t_2})$, it is necessary to assume that the length $\tau$ satisfies $\mathbb{P}(\tau \leq t_1)=0$, otherwise we cannot use Bayes theorem. Indeed,  we cannot find, both; a function $q_{t_1,t_2}(.,.,.)$, which is non-negative and jointly measurable with respect to the three variables, and a $\sigma$-finite measure $\mu$ such that for every $B_1,B_2 \in \mathcal{B}(\mathbb{R})$,
		\[
		\begin{array}{c}
		\mathbb{P}\left((\zeta_{t_1},\zeta_{t_2})\in B_1 \times B_2\vert \tau=r\right)=\dint_{B_{1}\times B_{2}}q_{t_{1},t_{2}}(r,x_{1},x_{2})\mu(\mathrm{d}x_1\,\mathrm{d}x_2).
		\end{array}
		\]
		Due to the fact that for random variables $Z$ having a law absolutely continuous with respect to the Lebesgue measure on $\mathbb{R}$, the law of the couple $(Z,Z)$ is not absolutely continuous with respect to the Lebesgue measure on $\mathbb{R}^2$.
	\end{remark}
	We are now in a position to discuss the Markov property of the Brownian bridge with random length and pinning point. We first consider pinning points $Z$ with absolutely continuous law with respect to the Lebesgue measure. Below we show that there exist random times $\tau$ such that the Brownian bridge $\zeta$ with length $\tau$ and pinning point $Z$ is not an $\mathbb{F}^{\zeta,c}$-Markov process.
	\begin{theorem}
		Let $Z$ be a random variable having a law which is absolutely continuous with respect to the Lebesgue measure. There exist random times $\tau$ such that the Brownian bridge $\zeta$ with length $\tau$ and pinning point $Z$ does not possess the Markov property with respect to its natural filtration.
	\end{theorem}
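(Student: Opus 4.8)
The plan is to argue by contradiction, using Lemma~\ref{lemmamarkov} to produce a failure of the Markov property at the level of two-dimensional conditioning. Suppose $\zeta$ were an $\mathbb{F}^{\zeta,c}$-Markov process. Then for any $0<t_1<t_2<u$ and any bounded measurable $g$ one has $\mathbb{E}[g(\zeta_u)\,\vert\,\mathcal{F}^{\zeta,c}_{t_2}]=\mathbb{E}[g(\zeta_u)\,\vert\,\zeta_{t_2}]=:h(\zeta_{t_2})$; since $\sigma(\zeta_{t_1},\zeta_{t_2})\subset\mathcal{F}^{\zeta,c}_{t_2}$, conditioning once more yields $\mathbb{E}[g(\zeta_u)\,\vert\,\zeta_{t_1},\zeta_{t_2}]=h(\zeta_{t_2})$, so the left-hand side must be $\sigma(\zeta_{t_2})$-measurable. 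I will exhibit a length $\tau$ for which this conclusion fails. Fix reals $0<t_1<r_1<t_2<u<r_2$ and let $\tau$ take the two values $r_1,r_2$ with probabilities $p_1,p_2>0$. Then $F_{\tau}(t_1)=0$, so Lemma~\ref{lemmamarkov} applies with these $t_1<t_2<u$.

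The next step is to specialise formula~\eqref{eqzeta_ugivent_1t_2} to this $\tau$. Since $\tau$ puts no mass on $(t_2,u]$, the middle integral in \eqref{eqA_{t_1,t_2,u}} vanishes; the atom $r_1\in(t_1,t_2]$ contributes the first term of $A_{t_1,t_2,u}$ and of $B_{t_1,t_2}$, while the atom $r_2\in(u,\infty)$ contributes the last term of $A_{t_1,t_2,u}$ and the second term of $B_{t_1,t_2}$. Using the factorisation $\varphi_{\zeta_{t_1}^{r,z},\zeta_{t_2}^{r,z}}(x_1,x_2)=\tfrac{p(r-t_2,z-x_2)}{p(r,z)}\,p(t_1,x_1)\,p(t_2-t_1,x_2-x_1)$ that follows from \eqref{eqdensityofvectorzeta}, the common Brownian factor $p(t_1,\zeta_{t_1})\,p(t_2-t_1,\zeta_{t_2}-\zeta_{t_1})$ may be divided out of both numerator and denominator, and \eqref{eqzeta_ugivent_1t_2} reduces to
\begin{equation*}
\mathbb{E}[g(\zeta_u)\,\vert\,\zeta_{t_1},\zeta_{t_2}]=\frac{p_1\,g(\zeta_{t_2})\,f(\zeta_{t_2})\,R(\zeta_{t_1},\zeta_{t_2})+p_2\,K_{t_2,u}(r_2,\zeta_{t_2})\,\psi(\zeta_{t_2})}{p_1\,f(\zeta_{t_2})\,R(\zeta_{t_1},\zeta_{t_2})+p_2\,\psi(\zeta_{t_2})},
\end{equation*}
where $\psi(x)=\dint_{\mathbb{R}}\tfrac{p(r_2-t_2,z-x)}{p(r_2,z)}f(z)\,\hbox{d}z\in(0,\infty)$ and $R(x_1,x_2)=\varphi_{\zeta_{t_1}^{r_1,x_2}}(x_1)\big/\big(p(t_1,x_1)\,p(t_2-t_1,x_2-x_1)\big)$.

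It remains to show that this right-hand side genuinely depends on $\zeta_{t_1}$, and here some care is needed to rule out an accidental cancellation; this is the only delicate point. Regarded as a function of the scalar $R$, the displayed quotient is fractional-linear, and it is constant in $R$ precisely when its numerator and denominator are proportional, i.e.\ (on $\{f>0\}$, where $\psi>0$ as well) precisely when $g(\zeta_{t_2})=K_{t_2,u}(r_2,\zeta_{t_2})$. On the other hand, by \eqref{zeta^{r,z}density}, $R(\cdot,x_2)$ is, up to an $x_2$-dependent constant, the ratio of a Gaussian density in the first variable of variance $\tfrac{t_1(r_1-t_1)}{r_1}$ to one of variance $\tfrac{t_1(t_2-t_1)}{t_2}$; since $r_1\neq t_2$ these variances differ, so $R(\cdot,x_2)$ is a non-constant function of its first argument, and as the conditional law of $\zeta_{t_1}$ given $\zeta_{t_2}=x_2$ is non-degenerate (a mixture of the non-degenerate Gaussian laws coming from $\{\tau=r_1\}$ and from $\{\tau=r_2\}$), the variable $R(\zeta_{t_1},\zeta_{t_2})$ is not $\sigma(\zeta_{t_2})$-measurable. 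Finally I would take $g=\mathbb{I}_{(-\infty,0]}$: the transition kernel $\mathbb{P}(\zeta_u^{r_2,Z}\in\hbox{d}y\,\vert\,\zeta_{t_2}^{r_2,Z}=x)$ has a strictly positive density on all of $\mathbb{R}$ (its density with respect to $\hbox{d}y$ is the positive function $\big(\dint_{\mathbb{R}}\tfrac{p(r_2-u,z-y)}{p(r_2,z)}f(z)\,\hbox{d}z\big)p(u-t_2,y-x)\big/\dint_{\mathbb{R}}\tfrac{p(r_2-t_2,z-x)}{p(r_2,z)}f(z)\,\hbox{d}z$ appearing in $K_{t_2,u}$), so $K_{t_2,u}(r_2,x)\in(0,1)$ for every $x$, whereas $g$ takes only the values $0$ and $1$; hence $g\neq K_{t_2,u}(r_2,\cdot)$ everywhere, and since $\mathbb{P}(f(\zeta_{t_2})>0)\geq p_1>0$ (because $\zeta_{t_2}=Z$ on $\{\tau=r_1\}$), the quotient above fails, on a set of positive probability, to be a function of $\zeta_{t_2}$ alone. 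This contradicts $\mathbb{E}[g(\zeta_u)\,\vert\,\zeta_{t_1},\zeta_{t_2}]=h(\zeta_{t_2})$, and the theorem follows. Everything except this last non-degeneracy bookkeeping is a routine substitution into Lemma~\ref{lemmamarkov}.
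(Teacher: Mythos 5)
Your proof is correct and takes essentially the same route as the paper: a two-point random time with one atom in $(t_1,t_2)$ and one beyond, the formula of Lemma \ref{lemmamarkov} under $F_{\tau}(t_1)=0$, and the observation that the resulting conditional expectation genuinely depends on $\zeta_{t_1}$. The differences are minor — you place the second atom after $u$ and argue by contradiction, carrying out the non-degeneracy check (choice of $g=\mathbb{I}_{(-\infty,0]}$, $K_{t_2,u}\in(0,1)$, injectivity of the fractional-linear map in $R$) more explicitly than the paper, which instead compares with the one-variable formula of Corollary \ref{corlawoftauZzeta_ugivenzeta_t} and concludes from the fact that the factor $H_{t_1,T_1,t_2}$ is not identically $1$.
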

	\begin{proof}
		Let $Z$ have a law, which is absolutely continuous with respect to the Lebesgue measure and $\tau$ be a two-point random variable such that $\mathbb{P}(\tau=T_1)=\mathbb{P}(\tau=T_2)=\frac{1}{2}$.
		Our goal is to prove that the Brownian bridge $\zeta$ with length $\tau$ and a pinning point $Z$ is not a Markov process with respect to its natural filtration.
		Using Theorem 1.3 in Blumenthal and Getoor \cite{BG}, it is sufficient to prove that there exist a bounded measurable function $g$ and $0<t_1<t_2<u$ such that
		\begin{equation}
		\mathbb{E}[g(\zeta_u)|\zeta_{t_1},\zeta_{t_2}]\neq\mathbb{E}[g(\zeta_u)|\zeta_{t_2}].\label{eqzetaisnotMarkovian}
		\end{equation}
		Let $t_1$, $t_2$ and $u$ such that $0<t_1<T_1<t_2<u<T_2$, then $F_{\tau}(t_1)=0$ and $F_{\tau}(t_2)=\frac{1}{2}$. Moreover, for every $x, y\in \mathbb{R}$, we have
		\begin{equation*}
		\Psi_{t_2,u}(x,y)=\dint_{t_2}^{u}\frac{p(r-t_2,y-x)}{p(y,x)}\mathbb{P}_{\tau}(\mathrm{d}r)=0.
		\end{equation*}
		It follows from Lemma \ref{lemmamarkov} that for every bounded measurable function $g$, we have, $\mathbb{P}$-a.s.,
		\begin{align}
		\mathbb{E}[g(\zeta_u)|\zeta_{t_1},\zeta_{t_2}]&=\dfrac{g(\zeta_{t_2})f(\zeta_{t_2})\frac{p(T_1-t_1,\zeta_{t_2}-\zeta_{t_1})}{p(T_1,\zeta_{t_2})}+p(t_2-t_1,\zeta_{t_2}-\zeta_{t_1})\dint_{\mathbb{R}}g(y)\Phi_{u}(T_2,y)p(u-t_2,y-\zeta_{t_2})\mathrm{d}y}{f(\zeta_{t_2})\frac{p(T_1-t_1,\zeta_{t_2}-\zeta_{t_1})}{p(T_1,\zeta_{t_2})}+p(t_2-t_1,\zeta_{t_2}-\zeta_{t_1})\Phi_{t_2}(T_2,\zeta_{t_2})}\nonumber\\
		&=\dfrac{g(\zeta_{t_2})f(\zeta_{t_2})+H_{t_1,T_1,t_2}(\zeta_{t_1},\zeta_{t_2})\dint_{\mathbb{R}}g(y)\Phi_{u}(T_2,y)p(u-t_2,y-\zeta_{t_2})\mathrm{d}y}{f(\zeta_{t_2})+H_{t_1,T_1,t_2}(\zeta_{t_1},\zeta_{t_2})\Phi_{t_2}(T_2,\zeta_{t_2})},\label{eqnotMarkov1}
		\end{align}
		where
		\begin{equation*}
		H_{t_1,T_1,t_2}(x,y)=\dfrac{p(t_2-t_1,y-x)}{p(T_1-t_1,y-x)}p(T_1,y),\,\,x,y\in\mathbb{R}.
		\end{equation*}
		On the other hand, \eqref{eqlawoftauZzeta_ugivenzeta_tcontinuous} implies that, $\mathbb{P}$-a.s.,
		\begin{align}
		\mathbb{E}[g(\zeta_u)|\zeta_{t_2}]=\dfrac{g(\zeta_{t_2})f(\zeta_{t_2})+p(t_2,\zeta_{t_2})\dint_{\mathbb{R}}g(y)\Phi_{u,T_2}(y)\,p(u-t_2,y-\zeta_{t_2})\mathrm{d}y}{f(\zeta_{t_2})+p(t_2,\zeta_{t_2})\Phi_{t_2,T_2}(\zeta_{t_2})}.\label{eqnotMarkov2}
		\end{align}
		Since
		\begin{equation}
		\dfrac{p(t_2-t_1,\zeta_{t_2}-\zeta_{t_1})}{p(T_1-t_1,\zeta_{t_2}-\zeta_{t_1})}\neq \dfrac{p(t_2,\zeta_{t_2})}{p(T_1,\zeta_{t_2})},
		\end{equation}
		it follows from \eqref{eqnotMarkov1} and \eqref{eqnotMarkov2} that 
		$$ \mathbb{E}[g(\zeta_u)|\zeta_{t_1},\zeta_{t_2}]\neq\mathbb{E}[g(\zeta_u)|\zeta_{t_2}].$$
		This is precisely the assertion of the theorem.
	\end{proof}
	In the following result, we show that if $Z$ is a random variable having a discrete distribution then for any random time $\tau$, the Brownian bridge $\zeta$ with length $\tau$ and pinning point $Z$ is a Markov process with respect to its natural filtration $\mathbb{F}^{\zeta}$.
	\begin{theorem}\label{thmMarkovdiscrete}
		Assume that $Z$ is a random variable having a discrete distribution. Then for any random time $\tau$, the Brownian bridge $\zeta$ with length $\tau$ and pinning point $Z$ is an $\mathbb{F}^{\zeta}$-Markov process.
	\end{theorem}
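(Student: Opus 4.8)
The plan is to establish the Markov property through the criterion of Theorem 1.3 in Blumenthal and Getoor \cite{BG}, exactly as in the previous theorem: it is enough to show that for every $n\geq 1$, every $0<t_1<\cdots<t_n<u$ and every bounded measurable $g$ one has $\mathbb{E}[g(\zeta_u)\,|\,\zeta_{t_1},\ldots,\zeta_{t_n}]=\mathbb{E}[g(\zeta_u)\,|\,\zeta_{t_n}]$, $\mathbb{P}$-a.s. The discrete case works because, by Proposition \ref{propmeasurablity}(ii) and its proof, $\mathbb{I}_{\{\tau\leq t_n\}}=\mathbb{I}_{\{\zeta_{t_n}\in\Delta\}}=\sum_{i\geq 1}\mathbb{I}_{\{\zeta_{t_n}=z_i\}}$ $\mathbb{P}$-a.s.\ and $\{\zeta_{t_n}=z_i\}\subseteq\{Z=z_i\}$ $\mathbb{P}$-a.s.; in particular $\{\zeta_{t_n}\in\Delta\}$ is, modulo $\mathbb{P}$-null sets, $\sigma(\zeta_{t_n})$-measurable, so it commutes with both conditional expectations, and I would prove the required identity separately on $\{\zeta_{t_n}\in\Delta\}$ and on $\mathcal{A}:=\{\zeta_{t_n}\notin\Delta\}$.

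On $\{\zeta_{t_n}\in\Delta\}$ the claim is immediate: there $\tau\leq t_n$ and $Z=\zeta_{t_n}$ $\mathbb{P}$-a.s., hence $\zeta_u=Z=\zeta_{t_n}$ (because $u>t_n\geq\tau$), and both sides reduce to $g(\zeta_{t_n})\mathbb{I}_{\{\zeta_{t_n}\in\Delta\}}$. On $\mathcal{A}$ (which $\mathbb{P}$-a.s.\ equals $\{\tau>t_n\}$ and which we may assume non-negligible) I would condition on the length. By \eqref{eqargumentconditioanal} and Assumption \ref{hyindependenttauZW}, $\mathbb{E}[g(\zeta_u)\,|\,\zeta_{t_1},\ldots,\zeta_{t_n},\tau=r]=\mathbb{E}[g(\zeta_u^{r,Z})\,|\,\zeta_{t_1}^{r,Z},\ldots,\zeta_{t_n}^{r,Z}]$, and on $\mathcal{A}$ the conditional law of $\tau$ is carried by $(t_n,\infty)$ since $\mathbb{E}[\mathbb{I}_{\{\tau\leq t_n\}}\,|\,\zeta_{t_1},\ldots,\zeta_{t_n}]=\mathbb{I}_{\{\zeta_{t_n}\in\Delta\}}=0$ there. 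For $r>t_n$, the Markov property of the Brownian bridge with deterministic length $r$ and random pinning point $Z$ (Proposition 3.4 in \cite{HHM}) yields $\mathbb{E}[g(\zeta_u^{r,Z})\,|\,\zeta_{t_1}^{r,Z},\ldots,\zeta_{t_n}^{r,Z}]=\mathbb{E}[g(\zeta_u^{r,Z})\,|\,\zeta_{t_n}^{r,Z}]=:G(r,\zeta_{t_n})$ on $\{\tau=r\}$. It then remains only to check that on $\mathcal{A}$ the conditional law $\mathbb{P}(\tau\in\hbox{d}r\,|\,\zeta_{t_1},\ldots,\zeta_{t_n})$ depends on $\zeta_{t_n}$ alone.

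That last point is exactly where the finite-dimensional density formula \eqref{eqdensityofvectorzeta} does the work: for $r>t_n$ it factorises as $\varphi_{\zeta_{t_1}^{r,z},\ldots,\zeta_{t_n}^{r,z}}(x_1,\ldots,x_n)=\big(\prod_{i=1}^{n}p(t_i-t_{i-1},x_i-x_{i-1})\big)\,\frac{p(r-t_n,z-x_n)}{p(r,z)}$, so after summing over the atoms $z_i$ the whole $r$-dependence sits in the factor $C_r(x_n):=\sum_{i\geq 1}\frac{p(r-t_n,z_i-x_n)}{p(r,z_i)}p_i$, which is a function of $x_n$ only, while every contribution with $r\leq t_n$ is carried by $\{x_n\in\Delta\}$ and therefore drops out on $\mathcal{A}$. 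Running Bayes' formula (Remark \ref{remarkBayes}) with a $\sigma$-finite reference measure on $\mathbb{R}^n$ of the same nature as in the proof of Proposition \ref{proplawoftauZgivenzeta_t}(ii) (Lebesgue measure plus the "collapsed-diagonal" atom parts), the common prefactor $\prod_i p(t_i-t_{i-1},x_i-x_{i-1})$ cancels and one obtains, on $\mathcal{A}$, $\mathbb{P}(\tau\in\hbox{d}r\,|\,\zeta_{t_1},\ldots,\zeta_{t_n})=C_r(\zeta_{t_n})\mathbb{I}_{\{r>t_n\}}\mathbb{P}_{\tau}(\hbox{d}r)\big/\int_{(t_n,\infty)}C_{r'}(\zeta_{t_n})\mathbb{P}_{\tau}(\hbox{d}r')$. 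Hence $\mathbb{E}[g(\zeta_u)\,|\,\zeta_{t_1},\ldots,\zeta_{t_n}]\mathbb{I}_{\mathcal{A}}=\Psi(\zeta_{t_n})\mathbb{I}_{\mathcal{A}}$ for a measurable $\Psi$ built from $G$ and $C_r$; projecting this $\sigma(\zeta_{t_n})$-measurable identity further onto $\sigma(\zeta_{t_n})$ shows $\mathbb{E}[g(\zeta_u)\,|\,\zeta_{t_n}]\mathbb{I}_{\mathcal{A}}=\Psi(\zeta_{t_n})\mathbb{I}_{\mathcal{A}}$ as well, and the two pieces together give the claim.

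I expect the real difficulty to be bookkeeping rather than ideas: tracking the $\mathbb{P}$-null sets behind $\{\tau\leq t_n\}\simeq\{\zeta_{t_n}\in\Delta\}$ and $\{\zeta_{t_n}=z_i\}\subseteq\{Z=z_i\}$, setting up the regular conditional law of $(\zeta_{t_1},\ldots,\zeta_{t_n})$ given $\tau$ with an appropriate $\sigma$-finite dominating measure uniformly in $n$, and justifying the conditioning-on-$\tau$ step in \eqref{eqargumentconditioanal}. The single genuinely computational input — the cancellation of $\prod_i p(t_i-t_{i-1},x_i-x_{i-1})$, equivalently the fact that, given $\{\tau=r>t_n\}$ and $\zeta_{t_n}$, the path $(\zeta_s)_{s\leq t_n}$ is a standard Brownian bridge on $[0,t_n]$ pinned at $\zeta_{t_n}$ and free of $r$ and $Z$ — is precisely what fails when the law of $Z$ is absolutely continuous, and is exactly what makes the discrete case Markovian.
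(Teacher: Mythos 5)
Your proposal is correct in substance, but it takes a genuinely different route from the paper. The paper's proof also splits along $\{\tau\le t_n\}$ versus $\{t_n<\tau\}$ using Proposition \ref{propmeasurablity}(ii), exactly as you do, but on $\{t_n<\tau\}$ it never computes the posterior of $\tau$ given the whole observed vector: it replaces the conditioning variables $(\zeta_{t_0},\ldots,\zeta_{t_n})$ by $(\zeta_{t_n},\alpha_n,\ldots,\alpha_1)$ with $\alpha_k=\zeta_{t_k}/t_k-\zeta_{t_{k-1}}/t_{k-1}$, notes that on $\{t_n<\tau\}$ these coincide with the Brownian increments $\beta_k=W_{t_k}/t_k-W_{t_{k-1}}/t_{k-1}$, and exploits the fact that $(\beta_1,\ldots,\beta_n)$ is independent of $(\zeta_{t_n}^{r,z},\zeta_u^{r,z})$ and of $(\tau,Z)$, so that $f(\zeta_u)h(\zeta_{t_n})\mathbb{I}_{\{t_n<\tau\}}$ and $g(\beta_1,\ldots,\beta_n)$ are uncorrelated; the Markov identity then drops out of a short tower-property computation, with no Bayes formula and no $n$-dimensional densities. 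You instead condition on $\tau$, import the Markov property of $\zeta^{r,Z}$ from \cite{HHM}, and show by an $n$-dimensional Bayes argument (Lebesgue measure plus collapsed-diagonal atoms as dominating measure, in the spirit of Proposition \ref{proplawoftauZgivenzeta_t}(ii)) that on $\{\zeta_{t_n}\notin\Delta\}$ the posterior of $\tau$ is a function of $\zeta_{t_n}$ alone, via the factorisation in \eqref{eqdensityofvectorzeta}. Both proofs rest on the same structural fact, which you identify at the end (the scaled increments of the path before $t_n$ carry no information on $(\tau,Z,\zeta_u)$), but the executions differ in cost and payoff: the paper's trick buys economy, avoiding the construction of regular conditional densities on $\mathbb{R}^n$ uniformly in $n$, jointly measurable versions of $G(r,x)$, and most of the null-set bookkeeping you flag, and it needs no external Markov result for $\zeta^{r,Z}$; your version is heavier to write out in full (the dominating measure for the law of $(\zeta_{t_1},\ldots,\zeta_{t_n})$ given $\tau=r\le t_n$ must be assembled case by case over $r\in(t_{j-1},t_j]$, and the well-definedness of the Bayes ratio on $\{\zeta_{t_n}\notin\Delta\}$ has to be checked), but it buys an explicit formula for $\mathbb{P}(\tau\in\hbox{d}r\,\vert\,\zeta_{t_1},\ldots,\zeta_{t_n})$ in the discrete case and makes transparent precisely which cancellation fails when the law of $Z$ is absolutely continuous, in line with the remark following Lemma \ref{lemmamarkov} and with Theorem 3.10's negative result.
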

	\begin{proof}
		First, since $\zeta_{0}=0$ almost surely, it is sufficient to prove that for each finite collection $0<t_0<t_1\leq \cdots \leq t_n=t<u$ and for every  bounded measurable function $f$ one has
		\begin{equation}
		\mathbb{E}[f(\zeta_{u})\vert \zeta_{t_n},\ldots,\zeta_{t_0}]=\mathbb{E}[f(\zeta_{u})\vert\zeta_{t_n}].\label{eqMarkovdiscrete}
		\end{equation}
		Using property (ii) of Proposition \ref{propmeasurablity} together with the fact that the pinning point $Z$ is discrete we conclude that, for all $t>0$, $\{\tau\leq t \}\in \sigma(\zeta_t)\vee \mathcal{N}_P$.  Since $f(\zeta_{u})\mathbb{I}_{\{\tau\leq t\}}=f(\zeta_t)\mathbb{I}_{\{\tau\leq t\}}$ which is measurable with respect to $\sigma(\zeta_t)\vee \mathcal{N}_{P}$, it remains to prove \eqref{eqMarkovdiscrete} on the set $\{t<\tau \}$, that is, $\mathbb{P}\text{-a.s.},$
		\begin{equation}
		\mathbb{E}[f(\zeta_{u})\mathbb{I}_{\{t<\tau \}}\vert \zeta_{t_n},\ldots,\zeta_{t_0}]=\mathbb{E}[f(\zeta_{u})\mathbb{I}_{\{t<\tau \}}\vert\zeta_{t_n}].\label{eqMarkovdiscretet<tau}
		\end{equation}
		Setting,
		$$ \gamma_k=\dfrac{\zeta_{t_k}}{t_k}-\dfrac{\zeta_{t_{k-1}}}{t_{k-1}}\,\,\, \text{and}\,\,\, \alpha_k=\dfrac{W_{t_k}}{t_k}-\dfrac{W_{t_{k-1}}}{t_{k-1}},\,k=1,\cdots,n. $$
		By using the fact that $\gamma_k=\alpha_k$ on the set $\{t<\tau\}$ and that
		\begin{equation}
		\mathbb{E}[f(\zeta_{u})\mathbb{I}_{\{t<\tau\}}\vert \zeta_{t_n},\cdots,\zeta_{t_0}]=\mathbb{E}[f(\zeta_{u})\mathbb{I}_{\{t<\tau\}}\vert \zeta_{t_n},\gamma_n,\cdots,\gamma_{1}],
		\end{equation}
		we are reduced to proving that for any bounded measurable functions $g$ and $h$ on $\mathbb{R}^n$ and $\mathbb{R}$, respectively, we have
		\begin{equation}
		\mathbb{E}[f(\zeta_{u})\mathbb{I}_{\{t<\tau\}}h(\zeta_{t_n})g(\alpha_n,\cdots,\alpha_{1})]=\mathbb{E}[\mathbb{E}[f(\zeta_{u})\vert \zeta_{t_n} ]\mathbb{I}_{\{t<\tau\}}h(\zeta_{t_n})g(\alpha_n,\cdots,\alpha_{1})].\label{equationAcapt<tau}
		\end{equation}
		On the other hand, for $t<r$ and $z\in \mathbb{R}$, it is easy to see that the vectors $(\alpha_1,\cdots,\alpha_n)$ and $(\zeta_{t}^{r,z},\zeta_{u}^{r,z})$ are independent, which implies by using the fact that the vector $(\alpha_1,\cdots,\alpha_n)$ does not depend on $\tau$ and $Z$, the formula of total probability, and \eqref{condlawresptauZ}, that the random variables $f(\zeta_{u})h(\zeta_t)\mathbb{I}_{\{t<\tau \}}$ and $g(\alpha_1,\ldots,\alpha_n)$ are uncorrelated. Now taking into account all the above considerations, we have 
		\begin{align*}
		\mathbb{E}[f(\zeta_{u})\mathbb{I}_{\{t<\tau\}}h(\zeta_{t_n})g(\alpha_n,\cdots,\alpha_{1})]&=\mathbb{E}[f(\zeta_{u})h(\zeta_{t_n})\mathbb{I}_{\{t<\tau\}}]\mathbb{E}[g(\alpha_n,\cdots,\alpha_{1})]\\
		\\
		& =\mathbb{E}[\mathbb{E}[f(\zeta_{u})\vert \zeta_{t_n} ]\mathbb{I}_{\{t<\tau\}}h(\zeta_{t_n})]\mathbb{E}[g(\alpha_n,\cdots,\alpha_{1})]\\
		\\
		& =\mathbb{E}[\mathbb{E}[f(\zeta_{u})\vert \zeta_{t_n} ]\mathbb{I}_{\{t<\tau\}}h(\zeta_{t_n})g(\alpha_n,\cdots,\alpha_{1})].
		\end{align*}
		Hence \eqref{equationAcapt<tau} is proved.
	\end{proof}
	\begin{remark}
		It is not hard to see that the Markov property can be extended to the completed filtration $\mathbb{F}^{\zeta,c}$.
	\end{remark}
	\begin{center}
		\section{Brownian Bridge Information Process}
	\end{center}
	Developed countries are increasingly relying on gas storage to ensure security of supply. As a consequence gas storage is traded. The value of storage is derived from the possibility of buying and injecting gas at times of low prices and withdrawing and selling when prices are high. Given past price behaviour and a predictive model for future prices, recent research has examined the problem of how to manage the injection/withdrawal schedule of gas, faced by owners of storage contracts, by applying real option theory. Many papers are devoted to tackle this problem, Holland \cite{H} proposes Monte Carlo Simulation while Boogert and de Jong \cite{BJ} and Carmona and Ludkovski \cite{CL} apply Least Squares Monte Carlo Simulations. In contrast to the traditional approaches adopted in the literature, our approach aims to specify a model for the market filtration. We explicitly model the market filtration as being generated by a Brownian bridge and providing a noisy, partial flow of information about the future decision to inject respectively, withdraw gas. For a deeper discussion of such an approach we refer the reader to \cite{BHM2007}, \cite{BHM2008} and \cite{BBE}. Our suggested approach is motivated by Bedini et al. \cite{BBE}, who address credit risk. The time instant at which default occurs is represented by a strictly positive random time $\tau$ and the flow of information on future default is modelled by the completed natural filtration $\mathbb{F}^{\beta,c}$ associated with the Brownian bridge $\beta$ with random length $\tau$, i.e.
	$$\beta_t=W_{t\wedge \tau}-\dfrac{t\wedge \tau}{\tau}W_{\tau},\,\,t\geq 0,$$ 	
	where $W$ is a Brownian motion independent of $\tau$. 
	We adopt and generalize the model for the decision of the holder of a gas storage contract whether to inject or withdraw gas at pre-set levels $z_1<z_2$ at a random positive action time $\tau$, while staying inactive before $\tau$. The flow of information that motivates the holder of a gas contract to remain inactive before $\tau$ and to make an action at time $\tau$ is modelled by the completed natural filtration generated by a Brownian bridge process $\xi=(\xi_t, t\geq 0)$ starting from zero and conditioned to be equal to a constant $z_1$ at the time of injection and a constant $z_2$ at the time of withdrawal. It follows that the process $\xi$ takes the form
	\begin{equation}
	\xi_t(\omega):=W_{t\wedge \tau(\omega)}(\omega)-\dfrac{t\wedge \tau(\omega)}{\tau(\omega)}W_{\tau(\omega)}(\omega)+\dfrac{t\wedge \tau(\omega)}{\tau(\omega)}Z(\omega),~ t \geq 0,~\omega\in \Omega,\label{eqdefofxi}
	\end{equation} 
	where $$Z= \left\{
	\begin{array}{lll}
	& z_1\,\, \text{with probability}\,\,p_1,  
	
	\\ \\ & z_2\,\, \text{with probability}\,\,p_2=1-p_1.
\end{array}
\right.$$
Our approach aims to give a description of the information on the time action $\tau$. Since the information will be carried by $\xi=(\xi_t, t\geq 0)$, we call $\xi$ the Brownian bridge information process. The filtration $\mathbb{F}^{\xi}$ generated by the information process provides partial information on $\tau$. The intuitive idea is that away from the boundaries $z_1<z_2$ the Brownian bridge information process models the holder's motivation for remaining inactive. Alternatively, the case when the Brownian absorbs at $z_1$ (resp. absorbs at $z_2$) models the decision of injecting gas (resp. withdrawing gas). Since the holder of the contract can withdraw and inject gas multiple times, we reset the process at each time the holder makes an action. In this sense $\xi$ leaks information concerning the time $\tau$ at which the holder of a storage contract chooses to inject gas, do nothing, or withdraw gas. In order to compile some facts on $\tau$ it is important to study the properties of the process $\xi$. Specifically, the Markov property, the right continuity of its natural filtration and its semi-martingale decomposition. We emphasize that the issue of storage valuation is not limited to gas markets, it also plays a significant, balancing role in, for example, oil markets, soft commodity markets and even electricity. The principle of our approach is applicable to those markets as well.
We recall that we are working under the assumption that, the random time $\tau$, the pinning point $Z$ and the Brownian motion are independent. Since in this case the pinning point $Z$ follows a discrete distribution, the following result is a consequence of Theorem \ref{thmMarkovdiscrete} and Corollary \ref{corlawoftauZzeta_ugivenzeta_t}.
\begin{theorem}
The Brownian bridge information process is an $\mathbb{F}^{\xi}$-Markov process, with transition densities given by:
\begin{align}
\mathbb{P}(\xi_u\in \mathrm{d}y\vert \xi_t&=x)=\bigg[  \sum\limits _{i=1}^{2}\bigg(\mathbb{I}_{\{x=z_i\}}+\dfrac{\Psi_{t,u}(x,z_i)p_i}{\Psi_{t}(x,z_1)p_1+\Psi_{t}(x,z_2)p_2}\,\mathbb{I}_{\{x\neq z_1,x\neq z_2\}}\bigg) \mathbb{I}_{\{y=z_i\}}
\nonumber\\
&+p(u-t,y-x)\,\dfrac{\Psi_{u}(y,z_1)p_1+\Psi_{u}(y,z_2)p_2}{\Psi_{t}(x,z_1)p_1+\Psi_{t}(x,z_2)p_2}\,\mathbb{I}_{\{x\neq z_1,x\neq z_2\}}\mathbb{I}_{\{y\neq z_1,y\neq z_2\}}\bigg]\mu(\mathrm{d}y)
\end{align}
for all $0<t<u$. Where
$$\mu(\mathrm{d}y)=\delta_{z_1}(\mathrm{d}y)+\delta_{z_2}(\mathrm{d}y)+\mathrm{d}y.$$
\end{theorem}
\begin{remark}
It is clear that the transition density $ \mathbb{P}(\xi_u\in \mathrm{d}y|\xi_t=x)$ fails to depend only on $u - t$, hence, the Brownian bridge information process $\xi$ cannot be an homogeneous $\mathbb{F}^{\xi}$-Markov process.
\end{remark}
In the following result we state the stopping time property of the random action time $\tau$. Furthermore, we describe the structure of the a posteriori distribution of $(\tau,Z)$ based on the observation of the Brownian bridge information process $\xi$ up to time $t$.
\begin{theorem}\label{theorembayes}
The random action time $\tau$ is an $\mathbb{F}^{\xi,c}$-stopping time. Moreover, for any $t>0$ and for every measurable function on $(0,\infty)\times \mathbb{R}$ such that $g(\tau,Z)$
is integrable, we have, $\mathbb{P}$-a.s.,
\begin{equation}
\mathbb{E}[g(\tau,Z)|\mathcal{F}_t^{\xi,c}]=g(\tau,Z)\mathbb{I}_{\{\xi_t=Z\}}+\dfrac{\sum\limits _{i=1}^{2}\dint_{t}^{+\infty}g(r,z_i)\dfrac{p(r-t,z_i-\xi_t)}{p(r,z_i)}\mathbb{P}_{\tau}(\mathrm{d}r)p_i}{\Psi_{t}(\xi_t,z_1)p_1+\Psi_{t}(\xi_t,z_2)p_2}\,\mathbb{I}_{\{\xi_t\neq Z\}}.\label{eqlawoftauZgivenfzeta}
\end{equation}
\end{theorem}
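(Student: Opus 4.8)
The plan is to separate the two assertions and, for the conditioning formula, to work on the complementary events $\{\xi_t=Z\}$ and $\{\xi_t\neq Z\}$.

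Since the pinning point $Z$ takes only the two values $z_1,z_2$ it has a discrete law, so Proposition \ref{propmeasurablity} applies: by part (i) the sets $\{\xi_t=Z\}$ and $\{\tau\le t\}$ differ by a $\mathbb{P}$-null set, and by part (ii) $\{\tau\le t\}\in\sigma(\xi_t)\vee\mathcal{N}_p\subseteq\mathcal{F}_t^{\xi,c}$ for every $t>0$; since also $\{\tau\le 0\}=\emptyset$ (because $\tau>0$), $\tau$ is an $\mathbb{F}^{\xi,c}$-stopping time, and moreover $\{\xi_t=Z\}$ (equivalently $\{t<\tau\}$) belongs to $\sigma(\xi_t)\vee\mathcal{N}_p$. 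On the event $\{\xi_t=Z\}$ the conditioning is immediate: for a stopping time the variable $\tau\,\mathbb{I}_{\{\tau\le t\}}$ is $\mathcal{F}_t^{\xi,c}$-measurable, and on $\{\xi_t=Z\}=\{\tau\le t\}$ one has $Z=\xi_t$ and $\tau>0$, so $g(\tau,Z)\,\mathbb{I}_{\{\xi_t=Z\}}=\widetilde g\big(\tau\,\mathbb{I}_{\{\tau\le t\}},\xi_t\big)\,\mathbb{I}_{\{\xi_t=Z\}}$, where $\widetilde g$ extends $g$ by $0$ on $\{0\}\times\mathbb{R}$; this is $\mathcal{F}_t^{\xi,c}$-measurable and hence equals its own conditional expectation, which is the first term of \eqref{eqlawoftauZgivenfzeta}.

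The core is the complementary identity $\mathbb{E}[g(\tau,Z)\,\mathbb{I}_{\{t<\tau\}}\mid\mathcal{F}_t^{\xi,c}]=\mathbb{E}[g(\tau,Z)\mid\xi_t]\,\mathbb{I}_{\{t<\tau\}}$, $\mathbb{P}$-a.s., which says that on $\{t<\tau\}$ the past trajectory of $\xi$ carries no information on $(\tau,Z)$ beyond $\xi_t$. I would prove it exactly as in Theorem \ref{thmMarkovdiscrete}: since $\mathcal{F}_t^{\xi,c}=\mathcal{F}_t^{\xi}\vee\mathcal{N}_p$, it suffices to test against bounded $\mathcal{F}_t^{\xi}$-measurable variables and, by the functional monotone class theorem, against products $\prod_j h_j(\xi_{s_j})$ with $0<s_j\le t$. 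Fixing such a product, choose a partition $0<t_0<t_1<\dots<t_n=t$ containing all the $s_j$ and set $\alpha_k=\xi_{t_k}/t_k-\xi_{t_{k-1}}/t_{k-1}$, $\beta_k=W_{t_k}/t_k-W_{t_{k-1}}/t_{k-1}$ for $k=1,\dots,n$. Then $\sigma(\xi_{t_0},\dots,\xi_{t_n})=\sigma(\xi_t,\alpha_1,\dots,\alpha_n)$ by an invertible linear change of variables and $\alpha_k=\beta_k$ on $\{t<\tau\}$ (the $-W_\tau/\tau+Z/\tau$ contributions cancel), so $\mathbb{I}_{\{t<\tau\}}\prod_j h_j(\xi_{s_j})=\mathbb{I}_{\{t<\tau\}}\,\Phi(\xi_t,\beta_1,\dots,\beta_n)$ for a bounded measurable $\Phi$, and a second monotone class step reduces to test variables $h(\xi_t)\,\phi(\beta_1,\dots,\beta_n)$. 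For these I condition on $\{\tau=r,Z=z\}$ (the contribution vanishing when $r\le t$) and use that $(\beta_1,\dots,\beta_n)$ is independent of $(\tau,Z)$ and — being jointly Gaussian with $\zeta_t^{r,z}$ and uncorrelated to it when $t<r$, because $\mathrm{Cov}(\beta_k,W_s)=0$ for all $s\ge t_k$ — independent of $\zeta_t^{r,z}$; this shows that $g(\tau,Z)\,h(\xi_t)\,\mathbb{I}_{\{t<\tau\}}$ and $\phi(\beta_1,\dots,\beta_n)$ are uncorrelated, and likewise $\mathbb{E}[g(\tau,Z)\mid\xi_t]\,h(\xi_t)\,\mathbb{I}_{\{t<\tau\}}$ and $\phi(\beta_1,\dots,\beta_n)$; since $h(\xi_t)\,\mathbb{I}_{\{t<\tau\}}$ is $\mathbb{P}$-a.s. $\sigma(\xi_t)$-measurable, both test integrals equal $\mathbb{E}[g(\tau,Z)\,h(\xi_t)\,\mathbb{I}_{\{t<\tau\}}]\,\mathbb{E}[\phi(\beta_1,\dots,\beta_n)]$, which proves the identity. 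The main obstacle is precisely this reduction — keeping the two monotone class passages and the change of variables straight, plus the covariance check; conceptually it does not go beyond the proof of Theorem \ref{thmMarkovdiscrete}.

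Finally, on $\{\xi_t\neq Z\}$ Proposition \ref{proplawoftauZgivenzeta_t}(ii) gives $\mathbb{E}[g(\tau,Z)\mid\xi_t]\,\mathbb{I}_{\{\xi_t\neq Z\}}=\Big(\int_{\mathbb{R}}\int_{(t,\infty)}g(r,z)\,\phi_{\zeta_{t}^{r,z}}(\xi_t)\,\mathbb{P}_\tau(\mathrm{d}r)\,\mathbb{P}_Z(\mathrm{d}z)\Big)\,\mathbb{I}_{\{\xi_t\neq Z\}}$; substituting $\mathbb{P}_Z=p_1\delta_{z_1}+p_2\delta_{z_2}$ and $\phi_{\zeta_{t}^{r,z}}(x)=\varphi_{\zeta_{t}^{r,z}}(x)\big(\sum_i\int_{(t,\infty)}\varphi_{\zeta_{t}^{r,z_i}}(x)\,\mathbb{P}_\tau(\mathrm{d}r)\,p_i\big)^{-1}\mathbb{I}_{\{t<r\}}$ collapses the double integral into the ratio appearing in \eqref{eqlawoftauZgivenfzeta}. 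Adding this to the first term (from the second paragraph) yields \eqref{eqlawoftauZgivenfzeta}, completing the proof.
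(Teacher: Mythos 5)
Your proposal is correct, and it follows the paper's skeleton — the stopping-time claim via Proposition \ref{propmeasurablity}(ii), the split along $\{\xi_t=Z\}$ versus $\{\xi_t\neq Z\}$ (equivalently $\{\tau\le t\}$ versus $\{t<\tau\}$), and the final evaluation by specializing Proposition \ref{proplawoftauZgivenzeta_t}(ii), i.e.\ \eqref{eqtauZgivenzeta_tdiscrete}, to $\mathbb{P}_Z=p_1\delta_{z_1}+p_2\delta_{z_2}$ — but it handles the crucial middle step differently. The paper disposes of the term $\mathbb{E}[g(\tau,Z)\mathbb{I}_{\{t<\tau\}}\mid\mathcal{F}_t^{\xi,c}]$ in two lines by invoking the Markov property of $\xi$ together with the assertion that $g(\tau,Z)\mathbb{I}_{\{t<\tau\}}$ is measurable with respect to $\sigma(\xi_s,\, s\ge t)\vee\mathcal{N}_P$; this is short but implicitly uses the conditional-independence (whole-future) form of the Markov property and leaves the measurability of $(\tau,Z)$ with respect to the post-$t$ path unproved (it holds because $Z=\lim_{s\to\infty}\xi_s$ and $\tau\vee t$ is the first time after $t$ from which the path is frozen at that limit, but the paper does not spell this out). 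You instead reprove the projection identity $\mathbb{E}[g(\tau,Z)\mathbb{I}_{\{t<\tau\}}\mid\mathcal{F}_t^{\xi,c}]=\mathbb{E}[g(\tau,Z)\mid\xi_t]\,\mathbb{I}_{\{t<\tau\}}$ directly, by re-running the mechanism of Theorem \ref{thmMarkovdiscrete}: monotone-class reduction to product test variables, the change of variables $\alpha_k=\beta_k$ on $\{t<\tau\}$, independence of $(\beta_1,\dots,\beta_n)$ from $(\tau,Z)$ and (via the vanishing covariances, by joint Gaussianity) from $\zeta_t^{r,z}$ when $t<r$, and the pull-out of $h(\xi_t)\mathbb{I}_{\{t<\tau\}}$, which is a.s.\ $\sigma(\xi_t)$-measurable by Proposition \ref{propmeasurablity}(ii). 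This costs you the longer computation but buys a self-contained argument that bypasses the two points the paper glosses over; your explicit writing of $g(\tau,Z)\mathbb{I}_{\{\tau\le t\}}$ as $\widetilde g(\tau\mathbb{I}_{\{\tau\le t\}},\xi_t)\mathbb{I}_{\{\xi_t=Z\}}$ likewise replaces the paper's appeal to $\mathcal{F}_\tau^{\xi,c}$-measurability of $Z$ by an elementary identification. Both routes land on the same formula \eqref{eqlawoftauZgivenfzeta}; the only caveats in yours are routine (the identity defining $\widetilde g$ holds only $\mathbb{P}$-a.s., and the monotone-class closures should be justified by dominated convergence since $g(\tau,Z)$ is merely integrable), neither of which is a gap.
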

\begin{proof}
Since the pinning point $Z$ is a random variable having a discrete distribution, it follows from the second assertion of Proposition \ref{propmeasurablity} that the random time $\tau$ is an $\mathbb{F}^{\xi,c}$-stopping time.
It remains to prove \eqref{eqlawoftauZgivenfzeta}. Obviously, we have
$$\mathbb{E}[g(\tau,Z)\vert \mathcal{F}_{t}^{\xi,c}]=\mathbb{E}[g(\tau,Z)\mathbb{I}_{\{\tau\leqslant t\}}\vert \mathcal{F}_{t}^{\xi,c}]+\mathbb{E}[g(\tau,Z)\mathbb{I}_{\{ t<\tau \}}\vert \mathcal{F}_{t}^{\xi,c}].$$
Since $\tau$ is an $\mathbb{F}^{\xi,c}$-stopping time and $Z$ is $\mathcal{F}_{\tau}^{\xi,c}$-measurable, it follows that $g(\tau,Z)\mathbb{I}_{\{\tau\leqslant t\}}$ is $ \mathcal{F}^{\xi,c}_{t}$-measurable then, $\mathbb{P}$-a.s, one has
\begin{align*}
\mathbb{E}[g(\tau,Z)\mathbb{I}_{\{\tau\leqslant t\}}\vert\mathcal{F}_{t}^{\xi,c}]&=g(\tau,Z)\mathbb{I}_{\{\tau\leqslant t\}}
\\
&=g(\tau,Z)\mathbb{I}_{\{\xi_{t}=Z\}}.
\end{align*}
On the other hand, due to the fact that $g(\tau,Z)\mathbb{I}_{\{t< \tau \}}$ is  $\sigma(\xi_s, t \leq s \leq +\infty) \vee \mathcal{N}_P$-measurable and $\xi$ is a Markov process with respect to its completed natural filtration we obtain, $\mathbb{P}$-a.s.,
$$\mathbb{E}[g(\tau,Z)\mathbb{I}_{\{ t<\tau \}}\vert \mathcal{F}_{t}^{\xi,c}]=\mathbb{E}[g(\tau,Z)\mathbb{I}_{\{ t<\tau \}}|\xi_{t}].$$
The result is deduced from \eqref{eqtauZgivenzeta_tdiscrete}.
\end{proof}
The following result extends Theorem \ref{theorembayes}.
\begin{corollary}
Let $0<t<u$ and $g$ be a measurable function on $(0, \infty) \times \mathbb{R} \times \mathbb{R}$
such that $g(\tau,Z,\xi_u)$ is integrable. Then, $\mathbb{P}$-a.s.,
\begin{multline}
\mathbb{E}[g(\tau,Z,\xi_u)|\mathcal{F}_t^{\xi,c}]=g(\tau,Z,Z)\mathbb{I}_{\{\xi_t=Z\}}+\bigg[\sum\limits _{i=1}^{2}\dint_{t}^{u}g(r,z_i,z_i)\phi_{\zeta_{t}^{r,z_i}}(\xi_t)\mathbb{P}_{\tau}(\mathrm{d}r)p_i\\
+\sum\limits _{i=1}^{2}\dint_{u}^{+\infty}\dint_{\mathbb{R}}g(r,z_i,y)[\mathbb{P}(\zeta_u^{r,z_i}\in \mathrm{d}y\vert \zeta_t^{r,z_i}=x)]_{x=\xi_t}\,\phi_{\zeta_{t}^{r,z_i}}(\xi_t)\mathbb{P}_{\tau}(\mathrm{d} r)p_i\bigg]\mathbb{I}_{\{\xi_t\neq Z\}}.\label{eqlawoftauZgivenxi_t}
\end{multline}
\end{corollary}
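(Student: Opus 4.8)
The plan is to run, for the triple $g(\tau,Z,\xi_u)$, the same two-step scheme that proves Theorem~\ref{theorembayes}, and then to read off the answer from Corollary~\ref{corlawoftauZzeta_ugivenzeta_t}(ii). First I would decompose
\[
\mathbb{E}[g(\tau,Z,\xi_u)\vert \mathcal{F}_t^{\xi,c}]=\mathbb{E}[g(\tau,Z,\xi_u)\mathbb{I}_{\{\tau\leq t\}}\vert \mathcal{F}_t^{\xi,c}]+\mathbb{E}[g(\tau,Z,\xi_u)\mathbb{I}_{\{t<\tau\}}\vert \mathcal{F}_t^{\xi,c}].
\]
On $\{\tau\leq t\}$ the information process is already absorbed, so $\xi_u=\xi_t=Z$ there and $g(\tau,Z,\xi_u)\mathbb{I}_{\{\tau\leq t\}}=g(\tau,Z,Z)\mathbb{I}_{\{\tau\leq t\}}$. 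Because $\tau$ is an $\mathbb{F}^{\xi,c}$-stopping time (Theorem~\ref{theorembayes}) and $Z$ is $\mathcal{F}_\tau^{\xi,c}$-measurable, this variable is $\mathcal{F}_t^{\xi,c}$-measurable and hence equals its own conditional expectation; and by Proposition~\ref{propmeasurablity}(i) we may replace $\mathbb{I}_{\{\tau\leq t\}}$ by $\mathbb{I}_{\{\xi_t=Z\}}$, yielding the first summand $g(\tau,Z,Z)\,\mathbb{I}_{\{\xi_t=Z\}}$ of \eqref{eqlawoftauZgivenxi_t}.

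For the second summand I would argue, exactly as in the proof of Theorem~\ref{theorembayes}, that on $\{t<\tau\}$ the pair $(\tau,Z)$ is a measurable functional of the post-$t$ trajectory of $\xi$ (using $Z=\lim_{s\to\infty}\xi_s$, $\xi_t=\lim_{s\downarrow t}\xi_s$, and the fact that $\xi$ is constant equal to $Z$ exactly from time $\tau$ on), so that $g(\tau,Z,\xi_u)\,\mathbb{I}_{\{t<\tau\}}$ is $\sigma(\xi_s,\,s\geq t)\vee\mathcal{N}_P$-measurable. Since $\xi$ is a Markov process with respect to its completed natural filtration, conditioning on $\mathcal{F}_t^{\xi,c}$ collapses to conditioning on $\xi_t$, i.e.
\[
\mathbb{E}[g(\tau,Z,\xi_u)\mathbb{I}_{\{t<\tau\}}\vert \mathcal{F}_t^{\xi,c}]=\mathbb{E}[g(\tau,Z,\xi_u)\mathbb{I}_{\{t<\tau\}}\vert \xi_t].
\]

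It then remains to evaluate this last conditional expectation from Corollary~\ref{corlawoftauZzeta_ugivenzeta_t}(ii) (recall that $\xi$ is the Brownian bridge $\zeta$ with random length $\tau$ and two-point pinning point), applied with the measurable map $(r,z,y)\mapsto g(r,z,y)\mathbb{I}_{\{t<r\}}$ in place of $g$ and with $\mathbb{P}_Z=p_1\delta_{z_1}+p_2\delta_{z_2}$, so that every integral $\dint_{\mathbb{R}}(\cdot)\,\mathbb{P}_Z(\hbox{d}z)$ turns into the two-point sum $\sum_{i=1}^{2}(\cdot)\,p_i$ and $\prod_{i}\mathbb{I}_{\{\zeta_t\neq z_i\}}=\mathbb{I}_{\{\xi_t\neq Z\}}$. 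The factor $\mathbb{I}_{\{t<r\}}$ annihilates the $\sum_i\dint_{(0,t]}$-term of that identity and equals $1$ on $(t,u]$ and on $(u,\infty)$, so what survives is precisely the bracketed expression multiplying $\mathbb{I}_{\{\xi_t\neq Z\}}$ in \eqref{eqlawoftauZgivenxi_t}; adding the two contributions gives the claimed formula. Since the statement is a corollary I do not expect a genuine obstacle: the only points requiring care are the measurability/Markov reduction in the second step (verbatim the reasoning used for Theorem~\ref{theorembayes}, now carried out for the integrable variable $g(\tau,Z,\xi_u)\mathbb{I}_{\{t<\tau\}}$) and the routine bookkeeping of indicators when substituting $g\,\mathbb{I}_{\{t<\cdot\}}$ into Corollary~\ref{corlawoftauZzeta_ugivenzeta_t}(ii).
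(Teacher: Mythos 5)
Your proof is correct, but it follows a slightly different route than the paper. The paper splits at the level $u$: it writes $g(\tau,Z,\xi_u)=g(\tau,Z,\xi_u)\mathbb{I}_{\{\tau\leq u\}}+g(\tau,Z,\xi_u)\mathbb{I}_{\{u<\tau\}}$, uses $\xi_u=Z$ on $\{\tau\leq u\}$ to reduce the first piece to a function of $(\tau,Z)$ alone and then applies Theorem \ref{theorembayes} with $(r,z)\mapsto g(r,z,z)\mathbb{I}_{\{r\leq u\}}$ (this yields both the $g(\tau,Z,Z)\mathbb{I}_{\{\xi_t=Z\}}$ term and the $(t,u]$ sum at once), while the second piece is treated by the Markov reduction to $\xi_t$ and an explicit computation via the disintegration \eqref{eqargumentconditioanal}, \eqref{condlawresptauZ} and the bridge transition law. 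You instead split at the level $t$: the absorbed part $\{\tau\leq t\}$ is handled directly by the stopping-time/measurability argument (as in the first half of the proof of Theorem \ref{theorembayes}) together with Proposition \ref{propmeasurablity}, and the whole unabsorbed part $\{t<\tau\}$ is dispatched by one Markov reduction followed by a single application of Corollary \ref{corlawoftauZzeta_ugivenzeta_t}(ii) to $\tilde g(r,z,y)=g(r,z,y)\mathbb{I}_{\{t<r\}}$, which produces the $(t,u]$ and $(u,\infty)$ terms simultaneously (the $(0,t]$ term of that identity being annihilated by $\mathbb{I}_{\{t<r\}}$, so no $1/F_\tau(t)$ issue arises, and $|\tilde g|\leq|g|$ keeps the integrability hypothesis intact). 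Your route buys economy: everything after the Markov step is bookkeeping on an already-proved formula, and you avoid recomputing the transition-law integral; the paper's route stays within Section 4, deriving the $(u,\infty)$ term from scratch so that the corollary leans only on Theorem \ref{theorembayes} rather than reaching back to the Section 3 corollary. Both arguments share the same two load-bearing steps — the measurability of the unabsorbed functional with respect to $\sigma(\xi_s,\,s\geq t)\vee\mathcal{N}_P$ and the Markov collapse of $\mathcal{F}_t^{\xi,c}$ to $\sigma(\xi_t)$ — and your treatment of these is at the same level of rigor as the paper's, so there is no gap.
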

\begin{proof}
Due to the fact that $\xi$ is an $\mathbb{F}^{\xi,c}$-Markov process, $g(\tau,Z,\xi_u)\mathbb{I}_{\{ u<\tau \}}$ is measurable with respect to $\sigma(\xi_s, u \leq s \leq +\infty) \vee \mathcal{N}_P$, and that $\xi_u=Z$ on the set $\{ \tau \leq u \}$, we obtain, $\mathbb{P}$-a.s.,
\begin{align}
\mathbb{E}[g(\tau,Z,\xi_u)|\mathcal{F}_t^{\xi,c}]&=\mathbb{E}[g(\tau,Z,\xi_u)\mathbb{I}_{\{\tau\leq u \}}|\mathcal{F}_t^{\xi,c}]+	\mathbb{E}[g(\tau,Z,\xi_u)\mathbb{I}_{\{ u<\tau \}}|\mathcal{F}_t^{\xi,c}]\nonumber\\
\nonumber\\
&=\mathbb{E}[g(\tau,Z,Z)\mathbb{I}_{\{\tau\leq u \}}|\mathcal{F}_t^{\xi,c}]+	\mathbb{E}[g(\tau,Z,\xi_u)\mathbb{I}_{\{ u<\tau \}}|\xi_u].
\end{align}
We remark that according to Theorem \ref{theorembayes}
\begin{align}
\mathbb{E}[g(\tau,Z,Z)\mathbb{I}_{\{\tau\leq u \}}|\mathcal{F}_t^{\xi,c}]=g(\tau,Z,Z)\mathbb{I}_{\{\xi_t=Z\}}+\sum\limits _{i=1}^{2}\dint_{t}^{u}g(r,z_i,z_i)\phi_{\zeta_{t}^{r,z_i}}(\xi_t)\mathbb{P}_{\tau}(\mathrm{d}r)p_i\mathbb{I}_{\{\xi_t\neq Z\}}.
\end{align}
On the other hand, from \eqref{eqargumentconditioanal}, \eqref{condlawresptauZ} and Theorem \ref{theorembayes}, for all $x\in \mathbb{R}$,
\begin{align}
\mathbb{E}[g(\tau,Z,\xi_u)&\mathbb{I}_{\{ u<\tau \}}|\xi_t=x]
=\dint_{\mathbb{R}}\dint_{u}^{+\infty}\mathbb{E}[g(r,z,\zeta_u^{r,z})\vert \zeta_t^{r,z}=x]\mathbb{P}(\tau \in \mathrm{d}r, Z\in \mathrm{d}z\vert \xi=x)\nonumber\\
&=\dint_{\mathbb{R}}\dint_{u}^{+\infty}\dint_{\mathbb{R}}g(r,z_i,y)\mathbb{P}(\zeta_u^{r,z_i}\in \mathrm{d}y\vert \zeta_t^{r,z_i}=x)\mathbb{P}(\tau \in \mathrm{d}r, Z\in \mathrm{d}z\vert \xi_t=x)\nonumber\\
&=\sum\limits _{i=1}^{2}\dint_{u}^{+\infty}\dint_{\mathbb{R}}g(r,z_i,y)\mathbb{P}(\zeta_u^{r,z_i}\in \mathrm{d}y\vert \zeta_t^{r,z_i}=x)\phi_{\zeta_{t}^{r,z_i}}(\xi_t)\mathbb{P}_{\tau}(\mathrm{d} r)p_i\mathbb{I}_{\{\xi_t\neq Z\}},
\end{align}
which finishes the proof.
\end{proof}
\begin{remark}\label{propzeta_ugivenzeta_t}
Let $0<t<u$ and $g$ be a bounded measurable function defined on $\mathbb{R}$. We have, $\mathbb{P}$-a.s.,
\begin{multline}
\mathbb{E}[g(\xi_u)|\mathcal{F}_t^{\xi,c}]=g(Z)\mathbb{I}_{\{\xi_t=Z\}}+\bigg[\sum\limits _{i=1}^{2}g(z_i)\dint_{t}^{u}\phi_{\zeta_{t}^{r,z_i}}(\xi_t)\mathbb{P}_{\tau}(\mathrm{d}r)p_i\\
+\sum\limits _{i=1}^{2}\dint_{u}^{+\infty}G_{t,u}(r,z_i,\xi_t)\phi_{\zeta_{t}^{r,z_i}}(\xi_t)\mathbb{P}_{\tau}(\mathrm{d} r)p_i\bigg]\mathbb{I}_{\{\xi_t\neq Z\}},\label{eqlawof(tau,Z,zeta_u)givenn-cordinateofzeta}
\end{multline}
where
\begin{equation}
G_{t,u}(r,z,x)=\dint_{\mathbb{R}}g(y)p\bigg(\dfrac{r-u}{r-t}(u-t),y,\dfrac{r-u}{r-t}x+\dfrac{u-t}{r-t}z\bigg)\mathrm{d}y.
\end{equation}
\end{remark}
\begin{remark}
\begin{enumerate}
\item[(i)]  Let $t > 0$, then, $\mathbb{P}$-a.s.,
\begin{equation}
\tau_t:=\mathbb{E}[\tau|\mathcal{F}_t^{\xi,c}]=\tau\,\mathbb{I}_{\{\tau\leq t\}}+\sum\limits _{i=1}^{2}\dfrac{\dint_{t}^{+\infty}r\dfrac{p(r-t,z_i-\xi_t)}{p(r,z_i)}\mathbb{P}_{\tau}(\mathrm{d}r)}{\Psi_{t}(\xi_t,z_1)p_1+\Psi_{t}(\xi_t,z_2)p_2}\,p_i\,\mathbb{I}_{\{t< \tau\}}.\label{eqtau_t}
\end{equation}
\begin{figure}[H]
\centering
\includegraphics[width=8cm]{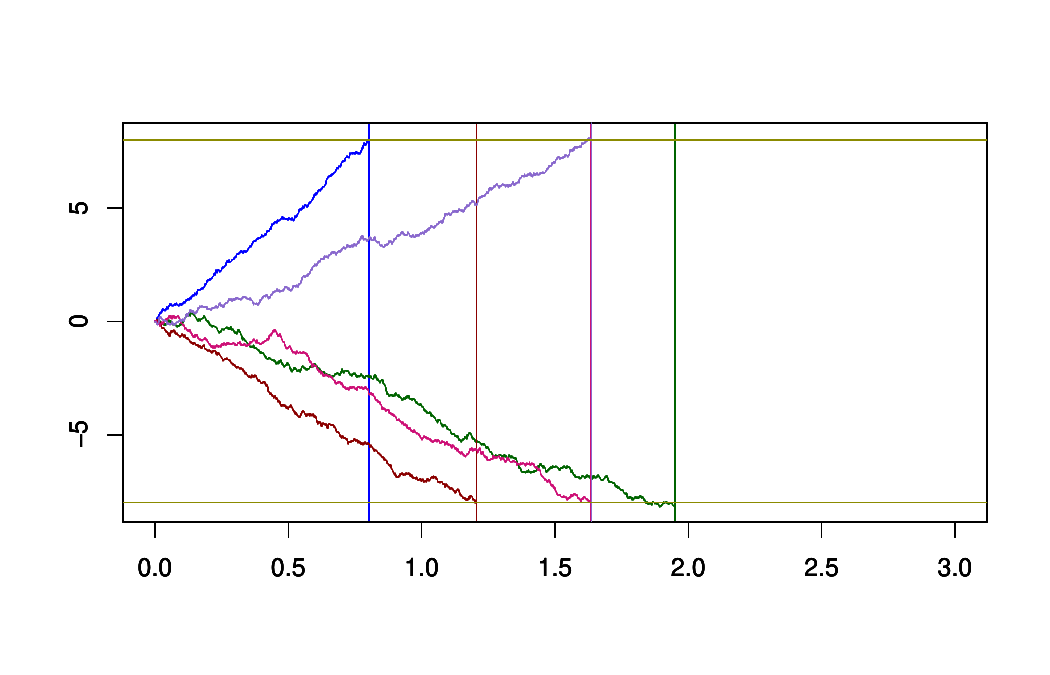}
\includegraphics[width=8cm]{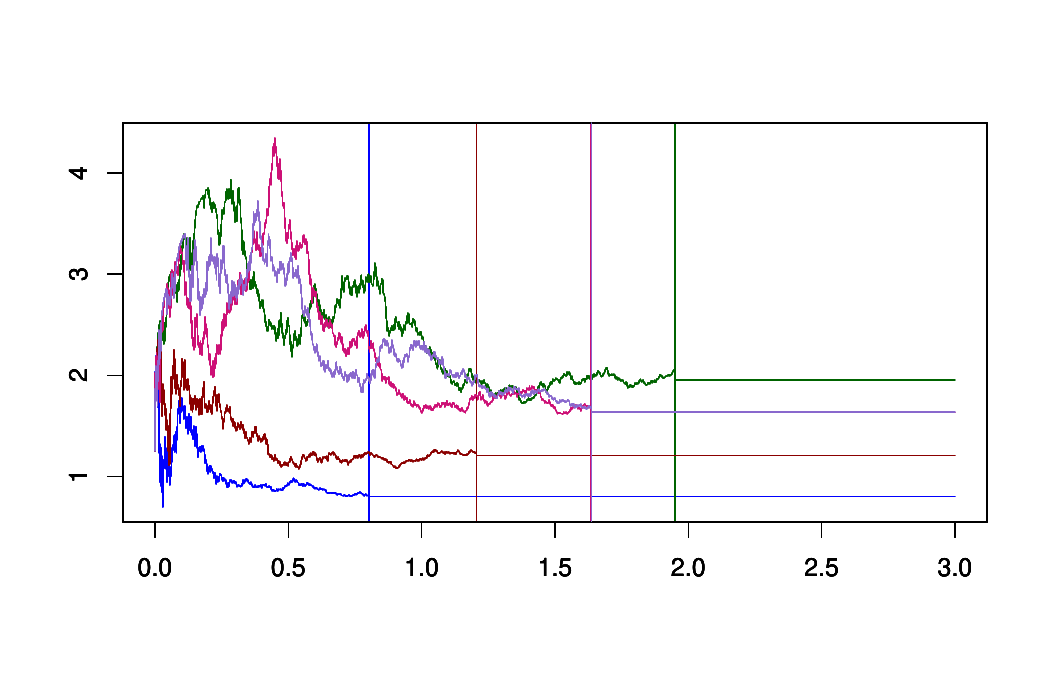}
\caption{The left hand side shows simulated paths of a Brownian bridge information process in the case where the length follows an exponential distribution with rate parameter $\lambda=0.8$, $z_2=-z_1=8$ and $p=0.3$. The right hand side shows simulated paths of $(\tau_t, t\geq 0)$.}
\end{figure}
\item[(ii)] Let $t > 0$, then, $\mathbb{P}$-a.s.,
\begin{equation}
Z_t:=\mathbb{E}[Z|\mathcal{F}_t^{\xi,c}]=Z\,\mathbb{I}_{\{\tau\leq t\}}+\bigg[z_1\bigg( 1+\dfrac{\Psi_{t}(\xi_t,z_2)p_2}{\Psi_{t}(\xi_t,z_1)p_1} \bigg)^{-1}
+z_2\bigg( 1+\dfrac{\Psi_{t}(\xi_t,z_1)p_1}{\Psi_{t}(\xi_t,z_2)p_2} \bigg)^{-1}\bigg]\mathbb{I}_{\{t< \tau\}}.\label{eqZ_t}
\end{equation}
\begin{figure}[H]
\centering
\includegraphics[width=8cm]{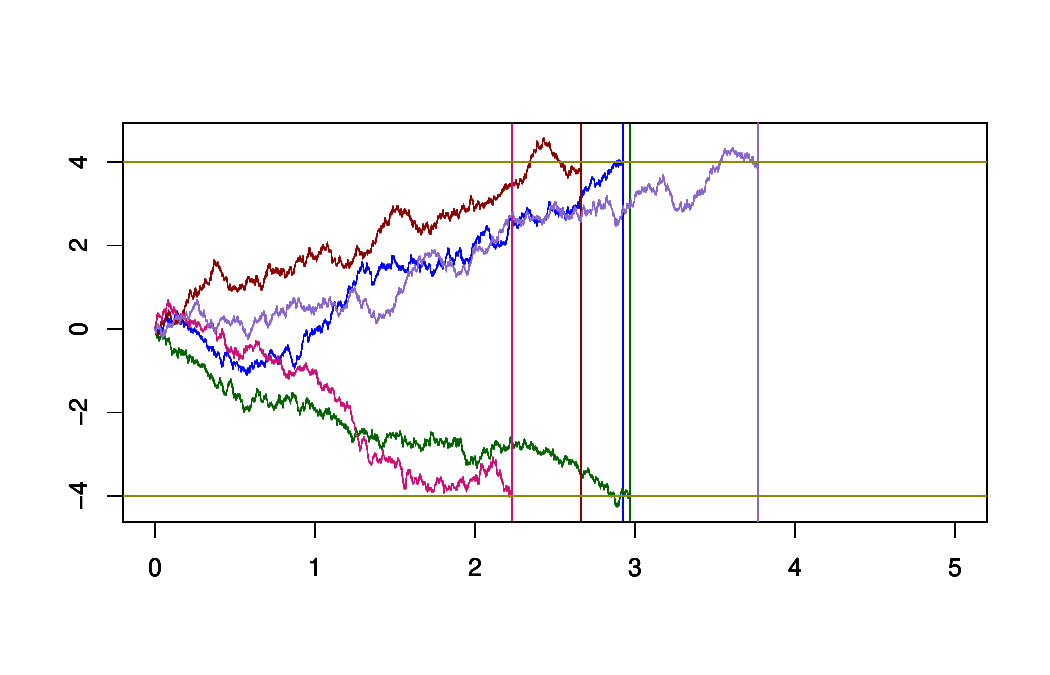}
\includegraphics[width=8cm]{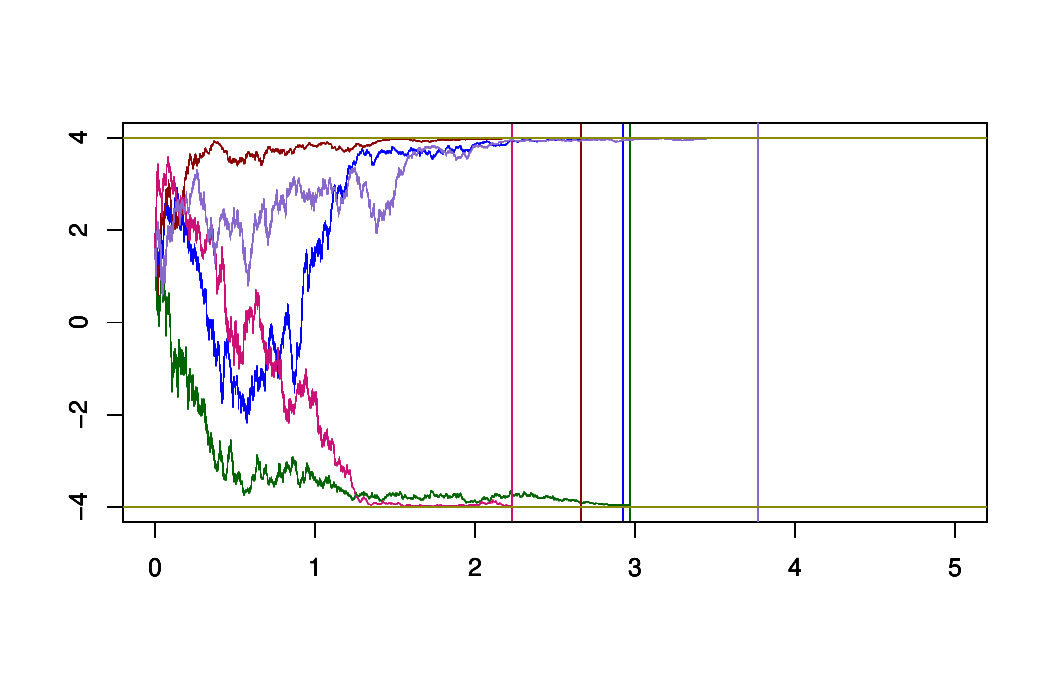}
\caption{The left hand side shows simulated paths of a Brownian bridge information process in the case where the length follows an exponential distribution with rate parameter $\lambda=0.3$, $z_2=-z_1=4$ and $p=0.7$. The right hand side shows simulated paths of $(Z_t, t\geq 0)$.}
\end{figure}
\item[(iii)] For $t < u$, the conditional expectation of $\xi_u$ given $\mathcal{F}_t^{\xi}$ is given by
\begin{multline}
\xi_{t,u}:=\mathbb{E}[\xi_u|\mathcal{F}_t^{\xi}]=\xi_t+\bigg[\sum\limits _{i=1}^{2}(z_i-\xi_t) \dfrac{\Psi_{t,u}(\xi_t,z_i)}{\Psi_{t}(\xi_t,z_1)p_1+\Psi_{t}(\xi_t,z_2)p_2}\,p_i\\
+\sum\limits _{i=1}^{2}(z_i-\xi_t) \dfrac{\dint_{u}^{+\infty}\dfrac{u-t}{r-t}\dfrac{p(r-t,z_i-\xi_t)}{p(r,z_i)}\mathbb{P}_{\tau}(\mathrm{d}r)}{\Psi_{t}(\xi_t,z_1)p_1+\Psi_{t}(\xi_t,z_2)p_2}\,p_i
\bigg]\mathbb{I}_{\{t< \tau\}}.\label{eqxi_t,u}
\end{multline}
\begin{figure}[H]
\centering
\includegraphics[width=8cm]{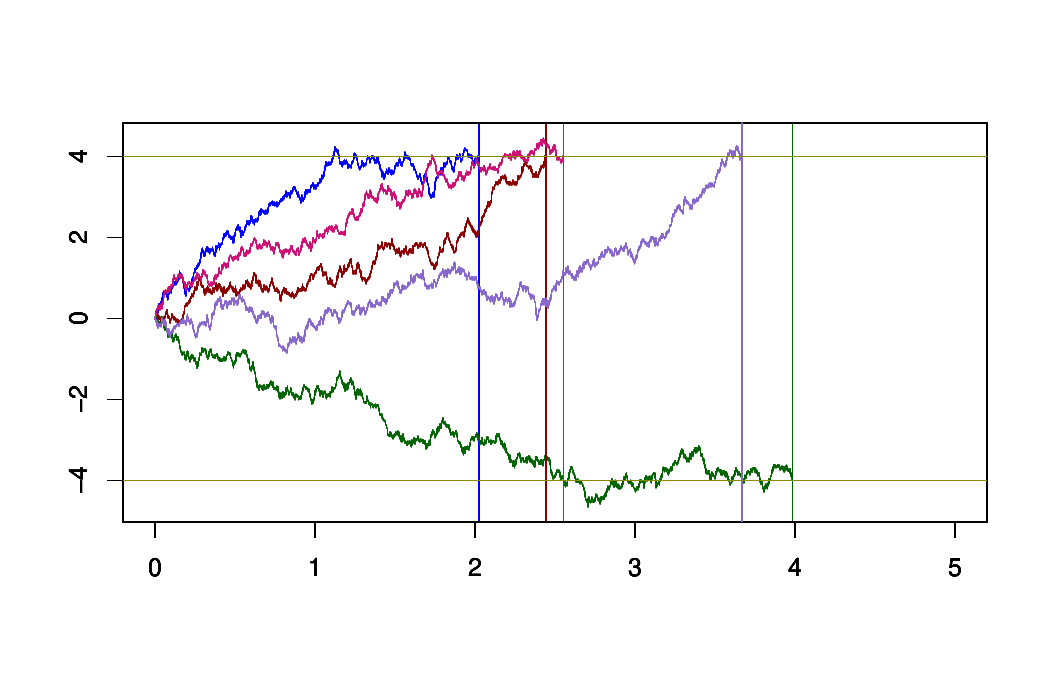}
\includegraphics[width=8cm]{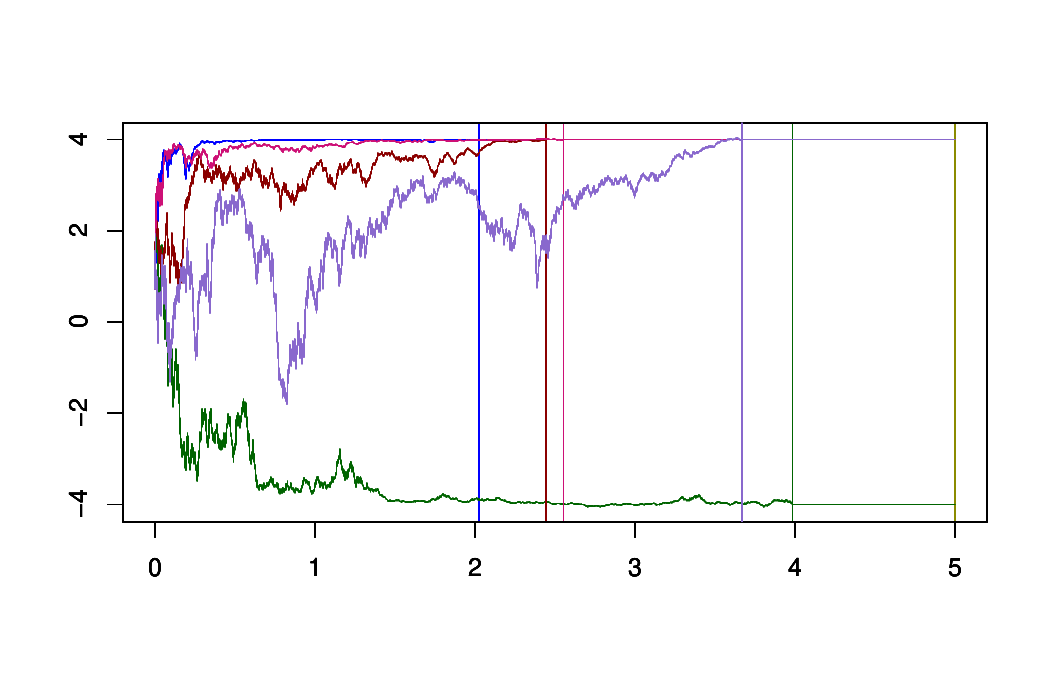}
\caption{The left hand side shows simulated paths of a Brownian bridge information process in the case where the length follows an exponential distribution with rate parameter $\lambda=0.4$, $z_2=-z_1=4$ and $p=0.7$. The right hand side shows simulated paths of $(\xi_{t,u}, 0\leq t\leq u)$ in case $u=5$.}
\end{figure}
\end{enumerate}
\end{remark}
In order to state the result of the right-continuity of the completed natural filtration associated to the process $\xi$, we need first to state the following auxiliary result:
\begin{lemma}\label{lemmatransitionfunctionrigthcontinuous}
Let $u$ be a strictly positive real number and $g$ be a bounded continuous function. Then, $\mathbb{P}$-a.s.,
\begin{enumerate}
\item[(i)] The function $t\longrightarrow \mathbb{E}[g(\xi_u)|\xi_t]$ is right-continuous on $]0,u]$.
\item[(ii)] If, in addition, $\mathbb{P}(\tau>\varepsilon)=1$ for some $\varepsilon>0$. Then the function $t\longrightarrow \mathbb{E}[g(\xi_u)|\xi_t]$ is right-continuous at $0$.
\end{enumerate}	
\end{lemma}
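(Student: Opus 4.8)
The plan is to combine the Markov property of $\xi$ with the explicit conditional‑expectation formula of Remark \ref{propzeta_ugivenzeta_t}, and then to reduce the right‑continuity assertion to the joint continuity in $(t,x)$ of an explicit function, composed with the (continuous) paths of $\xi$.

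First I would fix a convenient version of the conditional expectation. Since $\xi$ is an $\mathbb{F}^{\xi,c}$‑Markov process, $\mathbb{E}[g(\xi_u)\mid\xi_t]=\mathbb{E}[g(\xi_u)\mid\mathcal{F}_t^{\xi,c}]$ $\mathbb{P}$‑a.s., and by Remark \ref{propzeta_ugivenzeta_t} the right‑hand side equals $g(Z)\,\mathbb{I}_{\{\xi_t=Z\}}+\Phi(t,\xi_t)\,\mathbb{I}_{\{\xi_t\neq Z\}}$, where, after substituting the definition of $\phi_{\zeta_t^{r,z}}$, $\Phi(t,x)$ is the ratio whose numerator is $\sum_{i=1}^{2}p_i\big(g(z_i)\int_{(t,u]}\varphi_{\zeta_t^{r,z_i}}(x)\,\mathbb{P}_\tau(\hbox{d}r)+\int_{(u,\infty)}G_{t,u}(r,z_i,x)\,\varphi_{\zeta_t^{r,z_i}}(x)\,\mathbb{P}_\tau(\hbox{d}r)\big)$ and whose denominator is $\sum_{i=1}^{2}p_i\int_{(t,\infty)}\varphi_{\zeta_t^{r,z_i}}(x)\,\mathbb{P}_\tau(\hbox{d}r)$. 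Because $\tau$ is an $\mathbb{F}^{\xi,c}$‑stopping time (Theorem \ref{theorembayes}) and $\{\xi_t=Z\}=\{\tau\le t\}$ up to a $\mathbb{P}$‑null set (item (i) of Proposition \ref{propmeasurablity}), I may instead work with the version $Y_t:=g(Z)\,\mathbb{I}_{\{\tau\le t\}}+\Phi(t,\xi_t)\,\mathbb{I}_{\{t<\tau\}}$. This choice is the crucial point: the naive version $g(Z)\,\mathbb{I}_{\{\xi_t=Z\}}+\Phi(t,\xi_t)\,\mathbb{I}_{\{\xi_t\neq Z\}}$ would jump at every time $<\tau$ at which the path of $\xi$ touches $z_1$ or $z_2$ (which happens with positive probability), whereas $Y$ does not.

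Next I would argue path by path. For $\mathbb{P}$‑almost every $\omega$ the path $s\mapsto\xi_s(\omega)$ is continuous, $0<\tau(\omega)<\infty$, $Z(\omega)\in\{z_1,z_2\}$, and $\mathbb{P}_\tau((s,\infty))>0$ for every $s<\tau(\omega)$ (the last point because $\mathbb{P}(\tau>s)=0$ for $s$ beyond the essential supremum of $\tau$). Fix such an $\omega$ and $t\in\,]0,u]$. If $t\ge\tau(\omega)$, then $Y_s(\omega)=g(Z(\omega))$ for all $s\ge t$, so $Y_{\cdot}(\omega)$ is right‑continuous at $t$. If $t<\tau(\omega)$, then $Y_s(\omega)=\Phi\big(s,\xi_s(\omega)\big)$ for all $s\in[t,\tau(\omega))$, and right‑continuity at $t$ follows from the continuity of $s\mapsto\xi_s(\omega)$ together with the joint continuity of $\Phi$ at $\big(t,\xi_t(\omega)\big)$. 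Since the value at $t=u$ is in any case $g(\xi_u)$, this proves (i).

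The analytic core — and the step I expect to be the main obstacle — is to show that $(t,x)\mapsto\Phi(t,x)$ is continuous on $\{(t,x):0<t\le u,\ \mathbb{P}_\tau((t,\infty))>0\}\times\mathbb{R}$, the denominator being strictly positive there. I would prove this by dominated convergence applied to the Gaussian integrals; the delicate point is the behaviour of $\varphi_{\zeta_t^{r,z_i}}(x)=p\big(t(r-t)/r,x,tz_i/r\big)$ as $r\downarrow t$, where it blows up, but on a compact set of $(t,x)$ one has a bound $\varphi_{\zeta_t^{r,z_i}}(x)\le C\,(r-t)^{-1/2}$ which is $\mathbb{P}_\tau$‑integrable near $r=t$ and serves as a locally uniform majorant, while $G_{t,u}(r,z_i,x)$ is a Gaussian convolution of the bounded continuous function $g$ and depends continuously on its parameters. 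For part (ii), the hypothesis $\mathbb{P}(\tau>\varepsilon)=1$ forces $\mathbb{P}_\tau$ to be carried by $(\varepsilon,\infty)$, so for $t<\varepsilon$ all the integrals above run over $(\varepsilon,\infty)$, away from the singular regime $r\approx t$; using $\xi_0=0$ and the explicit Gaussian expressions one checks that $\Phi(t,x)\to\sum_{i=1}^{2}p_i\,\mathbb{E}[g(\zeta_u^{\tau,z_i})]=\mathbb{E}[g(\xi_u)]$ as $(t,x)\to(0,0)$. Since $Y_t=\Phi(t,\xi_t)$ for $t<\varepsilon$ ($\mathbb{P}$‑a.s.), $\xi_t\to\xi_0=0$ a.s., and $\mathbb{E}[g(\xi_u)\mid\xi_0]=\mathbb{E}[g(\xi_u)]$, this yields right‑continuity at $0$.
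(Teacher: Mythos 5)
Your overall route is the same as the paper's: pass to the explicit kernel from Remark \ref{propzeta_ugivenzeta_t}, and reduce right-continuity to continuity in $(t,x)$ of that kernel composed with the continuous paths of $\xi$; your treatment of the case $t\ge\tau(\omega)$, of the factor $G_{t,u}$, and of part (ii) (where the hypothesis $\mathbb{P}(\tau>\varepsilon)=1$ keeps the integrals away from $r\approx t$) is in line with the paper. The genuine gap is in your analytic core. You claim $(t,x)\mapsto\Phi(t,x)$ is continuous on $\{0<t\le u\}\times\mathbb{R}$ and justify the dominated-convergence step by the majorant $C(r-t)^{-1/2}$, asserted to be $\mathbb{P}_\tau$-integrable near $r=t$. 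Nothing in the paper assumes any regularity of the law of $\tau$, and this integrability can fail: e.g.\ a law putting mass proportional to $2^{-k}$ at the points $t+4^{-k}$ gives $\int_{(t,t+1]}(r-t)^{-1/2}\mathbb{P}_\tau(\hbox{d}r)=\infty$. For the same reason, at $x=z_i$ the integral $\int_{(t,\infty)}\varphi_{\zeta_t^{r,z_i}}(z_i)\mathbb{P}_\tau(\hbox{d}r)$ can diverge, so the formula does not even define $\Phi$ there, let alone continuously. This is not a harmless technicality for your argument: the whole point of your choice of the version $Y_t=g(Z)\mathbb{I}_{\{\tau\le t\}}+\Phi(t,\xi_t)\mathbb{I}_{\{t<\tau\}}$ and of the path-by-path formulation is to get right-continuity at all $t$ simultaneously, but with positive probability there are times $t<\tau$ at which $\xi_t\in\{z_1,z_2\}$ (you note this yourself), and at exactly those times your argument requires continuity of $\Phi$ at $x=z_i$, which you have not established and which your majorant cannot deliver.

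The paper avoids this issue by proving the convergence for each fixed $t$ along a sequence $t_n\downarrow t$: on $\{t<\tau\}$ one has $\xi_t\notin\{z_1,z_2\}$ almost surely (null set depending on $t$), and after cancelling a factor depending only on $(t,x)$ the kernel $\phi^i_t(r,x)$ of \eqref{eqphi_t(r,z,x)} is jointly continuous and bounded uniformly in $r>0$ for $(t,x)$ in compact subsets of $(0,\infty)\times\big(\mathbb{R}\setminus\{z_i\}\big)$; hence a \emph{constant} majorant suffices and no integrability of $(r-t)^{-1/2}$ is needed. Your proof becomes correct, and essentially identical to the paper's, if you restrict the continuity claim for $\Phi$ to $x\notin\{z_1,z_2\}$ and read the lemma pointwise in $t$; if you insist on the stronger single-null-set statement suggested by your pathwise setup, you need a separate argument at the times at which $\xi$ visits $z_1$ or $z_2$ strictly before $\tau$, which neither your domination argument nor the fixed-$t$ approach supplies.
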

\begin{proof}
Let $t\in]0,\,u]$ and $(t_{n})_{n\in \mathbb{N}}$
be a decreasing sequence of strictly positive real numbers converging to $t$: that is $ t <...< t_{n+1} < t_{n} < u$, $t_{n} \searrow t$ as $n \longrightarrow +\infty $. Our goal is to show that 
\begin{equation}
\lim\limits_{n\rightarrow+\infty}\mathbb{E}[g(\xi_{u})\vert\xi_{t_n}]=\mathbb{E}[g(\xi_{u})\vert\xi_{t}].\label{eqmarkovlimitzeta}
\end{equation}
It follows from \eqref{eqlawof(tau,Z,zeta_u)givenn-cordinateofzeta} that we have, $\mathbb{P}$-a.s.,
\begin{align}
\mathbb{E}[g(\xi_u)\vert\xi_{t_n}]&=g(Z)\mathbb{I}_{\{\xi_{t_n}=Z\}}+\sum\limits_{i=1}^2g(z_i)\dint_{t_{n}}^{u}\phi_{\zeta_{t_{n}}^{r,z_i}}(\xi_{t_{n}})\mathbb{P}_{\tau}(\mathrm{d}r)p_i\;\mathbb{I}_{\{\xi_{t_n}\neq Z\}}\nonumber
\\&+\sum\limits_{i=1}^2\dint_{u}^{+\infty}G_{t_{n},u}(r,z_i,\xi_{t_{n}})\phi_{\zeta_{t_{n}}^{r,z_i}}(\xi_{t_{n}})\mathbb{P}_{\tau}(\mathrm{d}r)p_i\;\mathbb{I}_{\{\xi_{t_n}\neq Z\}}\nonumber\\
&=g(Z)\mathbb{I}_{\{\tau\leq t_{n}\}}+\sum\limits_{i=1}^2g(z_i)\dint_{t_{n}}^{u}\phi_{\zeta_{t_{n}}^{r,z_i}}(\xi_{t_{n}})\mathbb{P}_{\tau}(\mathrm{d}r)p_i\;\mathbb{I}_{\{t_{n}<\tau\}}\nonumber
\\&+\sum\limits_{i=1}^2\dint_{u}^{+\infty}G_{t_{n},u}(r,z_i,\xi_{t_{n}})\phi_{\zeta_{t_{n}}^{r,z_i}}(\xi_{t_{n}})\mathbb{P}_{\tau}(\mathrm{d}r)p_i\;\mathbb{I}_{\{t_{n}<\tau\}}.\label{eqlawof(tau,Z,zeta_u)givenn-zeta_n}
\end{align}
\begin{enumerate}
\item[(i)] The case $t>0$. 	Using \eqref{eqlawof(tau,Z,zeta_u)givenn-zeta_n} we see that the relation \eqref{eqmarkovlimitzeta} holds true if the
following two identities are satisfied, for all $i\in \{1,2\}$, $\mathbb{P}$-a.s., on $\{t<\tau\}$:

\begin{equation}
\lim\limits_{n\rightarrow+\infty}\dint_{t_{n}}^{u}\phi_{\zeta_{t_n}^{r,z_i}}(\xi_{t_n})\;\mathbb{P}_{\tau}(\mathrm{d}r)=\dint_{t}^{u}\phi_{\zeta_{t}^{r,z_i}}(\xi_t)\;\mathbb{P}_{\tau}(\mathrm{d}r)\label{eqfirst}
\end{equation}
\begin{align}
\lim\limits_{n\rightarrow+\infty}\dint_{u}^{+\infty}G_{t_n,u}(r,z_i,\xi_{t_n})\phi_{\zeta_{t_n}^{r,z_i}}(\xi_{t_n})\;\mathbb{P}_{\tau}(\mathrm{d}r)=\dint_{u}^{+\infty}G_{t,u}(r,z_i,\xi_{t})\phi_{\zeta_{t}^{r,z_i}}(\xi_t)\;\mathbb{P}_{\tau}(\mathrm{d}r).\label{eqsecond}
\end{align}
Note that the left-hand sides of \eqref{eqfirst} and \eqref{eqsecond} can be rewritten as
\begin{align*}
\dint_{t_{n}}^{u}\phi_{\zeta_{t_n}^{r,z_i}}(\xi_{t_n})\,\mathbb{P}_{\tau}(\mathrm{d}r)=
\dfrac{\dint_{t_{n}}^{u}\dfrac{p(r-t_n,z_i-\xi_{t_n})}{p(r,z_i)}\mathbb{P}_{\tau}(\mathrm{d}r)}{\sum\limits_{i=1}^2\dint_{t_{n}}^{+\infty}\dfrac{p(r-t_n,z_i-\xi_{t_n})}{p(r,z_i)}\mathbb{P}_{\tau}(\mathrm{d}r)p_i}=\dfrac{\Psi_{t_n,u}(\xi_{t_n},z_i)}{p_1\Psi_{t_n}(\xi_{t_n},z_1)+p_2\Psi_{t_n}(\xi_{t_n},z_2)}
\end{align*}
and
\begin{align*}
\dint_{u}^{+\infty}G_{t_n,u}(r,z_i,\xi_{t_n})\phi_{\zeta_{t_n}^{r,z_i}}(\xi_{t_n})\;\mathbb{P}_{\tau}(\mathrm{d}r)=
\dfrac{\dint_{u}^{+\infty}G_{t_n,u}(r,z_i,\xi_{t_n})\dfrac{p(r-t_n,z_i-\xi_{t_n})}{p(r,z_i)}\mathbb{P}_{\tau}(\mathrm{d}r)}{p_1\Psi_{t_n}(\xi_{t_n},z_1)+p_2\Psi_{t_n}(\xi_{t_n},z_2)}
\end{align*}
First let us observe that for all $i\in \{1,2\}$ the function 
\[
(t,r,x)\longmapsto \dfrac{p(r-t,z_i-x)}{p(r,z_i)}\mathbb{I}_{\{t<r\}}=\sqrt{\dfrac{r}{r-t}}\exp\bigg[-\frac{1}{2}\bigg(\dfrac{(z_i-x)^2}{r-t}-\dfrac{z_i^2}{r}\bigg)\bigg]\mathbb{I}_{\{t<r\}},
\]
defined on $\ensuremath{(0,+\infty)\times[0,+\infty)\times\mathbb{R}\backslash\{z_i\}}$
is continuous. Note that, $\mathbb{P}$-a.s., $$ \{t<\tau \}=\{\xi_t\neq Z\}=\{\xi_t\neq z_1\}\bigcap \{\xi_t\neq z_2\}.$$ 
Hence, $\mathbb{P}$-a.s. on $\left\{ t<\tau\right\} $,
we have for all $i\in \{1,2\}$,
\begin{equation}
\underset{n\rightarrow+\infty}{\lim}\,\dfrac{p(r-t_n,z_i-\xi_{t_n})}{p(r,z_i)}\mathbb{I}_{\{t_n<r\}}=\dfrac{p(r-t,z_i-\xi_t)}{p(r,z_i)}\mathbb{I}_{\{t<r\}}.\label{phixiconv}
\end{equation}
For any compact subset $\mathcal{K}$ of $(0,+\infty)\times\mathbb{R}\backslash\{z_i\}$
it yields 
\[
\underset{(t,x)\in\mathcal{K},r>0}{\sup}\,\dfrac{p(r-t,z_i-x)}{p(r,z_i)}\mathbb{I}_{\{t<r\}}<+\infty.
\]
It follows, $\mathbb{P}$-a.s., on $\left\{ t<\tau\right\} $ that for all $i\in \{1,2\}$
\begin{equation}
\underset{n\in\mathbb{N},r>t}{\sup}\,\dfrac{p(r-t_n,z_i-\xi_{t_n})}{p(r,z_i)}\mathbb{I}_{\{t_n<r\}}<+\infty.\label{varphiconvergence}
\end{equation}
We conclude assertion \eqref{eqfirst} from the Lebesgue dominated convergence theorem.\\
Now let us prove \eqref{eqsecond}. Recall that the
function $G_{t_{n},u}(r,z_i,\xi_{t_{n}})$ is given by 
$$G_{t_{n},u}(r,z_i,\xi_{t_{n}})=\dint_{\mathbb{R}} g(y)p\bigg(\dfrac{r-u}{r-t_n}(u-t_n),y,\dfrac{r-u}{r-t_n}\xi_{t_n}+\dfrac{u-t_n}{r-t_n}z_i\bigg)\mathrm{d}y.$$
Note that the function
$$ y\longrightarrow p\bigg(\dfrac{r-u}{r-t_n}(u-t_n),y,\dfrac{r-u}{r-t_n}\xi_{t_n}+\dfrac{u-t_n}{r-t_n}z_i\bigg) $$
is a density on $\mathbb{R}$ for all $n$. Since $g$ is bounded, we deduce that $G_{t_{n},u}(r,z,\xi_{t_{n}})$ is bounded. Moreover we obtain from the weak convergence of Gaussian measures that  
\[
\underset{n\rightarrow+\infty}{\lim}\, G_{t_{n},u}(r,z,\xi_{t_{n}})=G_{t,u}(r,z,\xi_{t}),
\] 
combining the fact that $G_{t_{n},u}(r,z,\xi_{t_{n}})$ is bounded, \eqref{phixiconv}  and \eqref{varphiconvergence} , the assertion \eqref{eqsecond} is derived from the Lebesgue dominated convergence theorem.
\item[(ii)] The case $t=0$. Let us prove the relation \eqref{eqmarkovlimitzeta} under the assumption: there exists $\varepsilon>0$ such that $\mathbb{P}(\tau>\varepsilon)=1$. From \eqref{eqmarkovlimitzeta}, it is sufficient to verify that
\begin{equation}\label{equationlimitmarkovstep2t=0zeta}
\lim\limits_{n \rightarrow+\infty} \mathbb{E}[g(\xi_{u})\vert \xi_{t_{n}}]=\mathbb{E}[g(\xi_{u})\vert \xi_{0}],~~\mathbb{P}\text{-a.s.}
\end{equation}
we have, 
\begin{align}
\mathbb{E}[g(\xi_{u})]&=F(u)\mathbb{E}[g(Z)]+\dint_{\mathbb{R}}\dint_{u}^{+\infty}\dint_{\mathbb{R}}g(y)p\bigg(\dfrac{u(r-u)}{r},y,\dfrac{u}{r}z\bigg)\,\mathrm{d}y\,\mathbb{P}_{\tau}(\mathrm{d}r)\mathbb{P}_{Z}(\mathrm{d}z)\nonumber\\
&=F(u)\,\mathbb{E}[g(Z)]\,+\sum\limits_{i=1}^2\dint_{u}^{+\infty}\dint_{\mathbb{R}}g(y)p\left(\dfrac{u(r-u)}{r},y,\dfrac{u}{r}z_i\right)\mathrm{d}y\,\mathbb{P}_{\tau}(\mathrm{d}r)p_i.\label{eqEg(zeta_u)}
\end{align}
Without loss of generality we assume that
$t_{n}< \varepsilon $ for all $n\in \mathbb{N}$. Due to \eqref{eqlawof(tau,Z,zeta_u)givenn-zeta_n}, the fact that there exists $\varepsilon>0$ such that $\mathbb{P}(\tau>\varepsilon)=1$ and \eqref{eqEg(zeta_u)}, the statement \eqref{equationlimitmarkovstep2t=0zeta} will be proven once we prove that, $\mathbb{P}$-a.s., for all $i\in \{1,2\}$
\begin{equation}
\lim\limits_{n\rightarrow+\infty}\dint_{t_{n}}^{u}\phi_{\zeta_{t_n}^{r,z_i}}(\xi_{t_n})\;\mathbb{P}_{\tau}(\mathrm{d}r)=F(u),\label{eqfirststep2}
\end{equation}
\begin{align}
\lim\limits_{n\rightarrow+\infty}\dint_{u}^{+\infty}G_{t_n,u}(r,z_i,\xi_{t_n})\phi_{\zeta_{t_n}^{r,z_i}}(\xi_{t_n})\;\mathbb{P}_{\tau}(\mathrm{d}r)=\dint_{u}^{+\infty}\dint_{\mathbb{R}}g(y)p\bigg(\dfrac{u(r-u)}{r},y,\dfrac{u}{r}z_i\bigg)\mathrm{d}y\,\mathbb{P}_{\tau}(\mathrm{d}r).\label{eqsecondstep2}
\end{align}

Moreover, we have for all $r>\varepsilon$, for all $i\in \{1,2\}$,
\begin{equation}
\dfrac{p(r-t_n,z_i-\xi_{t_n})}{p(r,z_i)}\leq \sqrt{\dfrac{r}{r-t_n}}\exp\bigg[\dfrac{z_i^2}{2r}\bigg]\leq \sqrt{\dfrac{\varepsilon}{\varepsilon-t_1}}\exp\bigg[\dfrac{z_i^2}{2\varepsilon}\bigg].\label{eqbound}
\end{equation}
combining the fact that $g$ and $G_{t_{n},u}(r,z,\xi_{t_{n}})$ are bounded and inequality \eqref{eqbound} the assertions \eqref{eqfirststep2} and \eqref{eqsecondstep2} can be derived from the Lebesgue dominated convergence theorem.
\end{enumerate}
\end{proof}
We are now able  to state another main result of this section, namely the right-continuity of the completed natural filtration of the  Brownian bridge information process. 
\begin{theorem}\label{thmrightcontinuityfiltration}
The  filtration $\mathbb{F}^{\xi,c}$ satisfies the usual conditions of right-continuity and completeness.
\end{theorem}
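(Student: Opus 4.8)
The plan is to treat the two requirements separately. Completeness of $\mathbb{F}^{\xi,c}$ holds by construction, since every $\mathcal{F}^{\xi,c}_t=\mathcal{F}^{\xi}_t\vee\mathcal{N}_P$ contains $\mathcal{N}_P$; so the whole argument is devoted to right-continuity, that is, to proving $\mathcal{F}^{\xi,c}_{t^+}=\mathcal{F}^{\xi,c}_t$ for every $t\geq 0$. Since $\mathcal{F}^{\xi,c}_t\subseteq\mathcal{F}^{\xi,c}_{t^+}$ is trivial and $\mathcal{F}^{\xi,c}_{t^+}\subseteq\mathcal{F}^{\xi}_\infty\vee\mathcal{N}_P$, it is enough to show that
\[
\mathbb{E}\big[Y\,\big|\,\mathcal{F}^{\xi,c}_{t^+}\big]=\mathbb{E}\big[Y\,\big|\,\mathcal{F}^{\xi,c}_{t}\big]\qquad\mathbb{P}\text{-a.s.}
\]
for every bounded $\mathcal{F}^{\xi}_\infty$-measurable $Y$; indeed, for $A\in\mathcal{F}^{\xi,c}_{t^+}$ the indicator $\mathbb{I}_A$ coincides $\mathbb{P}$-a.s.\ with a bounded $\mathcal{F}^{\xi}_\infty$-measurable random variable, so $\mathbb{I}_A=\mathbb{E}[\mathbb{I}_A\,|\,\mathcal{F}^{\xi,c}_{t^+}]=\mathbb{E}[\mathbb{I}_A\,|\,\mathcal{F}^{\xi,c}_{t}]$ turns out to be $\mathcal{F}^{\xi,c}_t$-measurable, i.e.\ $A\in\mathcal{F}^{\xi,c}_t$. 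By the monotone class theorem it suffices to verify the displayed identity for $Y$ in the multiplicative class $\{\,\prod_{j=1}^{n}g_j(\xi_{u_j}):n\geq 1,\ 0\leq u_1<\dots<u_n,\ g_j\in C_b(\mathbb{R})\,\}$, and, after pulling out those factors with $u_j\leq t$ (which are already $\mathcal{F}^{\xi}_t$-measurable), we are left with $Y=\prod_{j=1}^{m}g_j(\xi_{v_j})$, $t<v_1<\dots<v_m$.

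The core case is $m=1$, say $Y=g(\xi_v)$ with $t<v$ and $g$ bounded continuous. Fix a sequence $s_n\downarrow t$ with $s_n<v$; the $\sigma$-algebras $\mathcal{F}^{\xi,c}_{s_n}$ decrease to $\mathcal{F}^{\xi,c}_{t^+}$, so the reverse martingale convergence theorem gives $\mathbb{E}[g(\xi_v)\,|\,\mathcal{F}^{\xi,c}_{s_n}]\to\mathbb{E}[g(\xi_v)\,|\,\mathcal{F}^{\xi,c}_{t^+}]$ $\mathbb{P}$-a.s. Since $\xi$ is an $\mathbb{F}^{\xi,c}$-Markov process, we also have $\mathbb{E}[g(\xi_v)\,|\,\mathcal{F}^{\xi,c}_{s_n}]=\mathbb{E}[g(\xi_v)\,|\,\xi_{s_n}]$, and Lemma~\ref{lemmatransitionfunctionrigthcontinuous}(i) yields $\mathbb{E}[g(\xi_v)\,|\,\xi_{s_n}]\to\mathbb{E}[g(\xi_v)\,|\,\xi_{t}]$. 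Hence $\mathbb{E}[g(\xi_v)\,|\,\mathcal{F}^{\xi,c}_{t^+}]=\mathbb{E}[g(\xi_v)\,|\,\xi_{t}]$, which is $\sigma(\xi_t)\vee\mathcal{N}_P$-measurable and therefore equals $\mathbb{E}[g(\xi_v)\,|\,\mathcal{F}^{\xi,c}_{t}]$. The same argument works at $t=0$ with Lemma~\ref{lemmatransitionfunctionrigthcontinuous}(ii) in place of (i) — this is where the non-degeneracy $\mathbb{P}(\tau>\varepsilon)=1$ near the origin is used, and the statement there amounts to a Blumenthal-type $0$-$1$ law at time $0$.

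Two upgrades then finish the proof. \emph{First}, I would extend $\mathbb{E}[h(\xi_v)\,|\,\mathcal{F}^{\xi,c}_{t^+}]=\mathbb{E}[h(\xi_v)\,|\,\xi_t]$ from continuous $h$ to arbitrary bounded Borel $h$. By the transition kernel of $\xi$ (the formulas preceding Theorem~\ref{theorembayes} and in Remark~\ref{propzeta_ugivenzeta_t}, together with the continuity estimates obtained in the proof of Lemma~\ref{lemmatransitionfunctionrigthcontinuous}), the conditional law of $\xi_v$ given $\xi_s=x$ is, on $\{x\notin\{z_1,z_2\}\}$, of the form $a_1(s,x)\delta_{z_1}+a_2(s,x)\delta_{z_2}+b(s,x,y)\,\mathrm{d}y$ with $a_1,a_2$ and $b(\cdot,\cdot,y)$ continuous in $(s,x)$; along $s_n\downarrow t$ the weights converge and the densities converge pointwise with convergent integrals, so by Scheff\'e's lemma the conditional laws converge in total variation, which is exactly what lets one pass to bounded Borel $h$ (on $\{\tau\leq t\}$ the conditional law is simply $\delta_Z$ and the convergence is immediate). \emph{Second}, I would go from $m=1$ to general $m$ by induction on $m$: conditioning over $\mathcal{F}^{\xi,c}_{v_1^+}$ and invoking the induction hypothesis at time $v_1$, the Markov property gives $\mathbb{E}[\prod_{j=2}^{m}g_j(\xi_{v_j})\,|\,\mathcal{F}^{\xi,c}_{v_1^+}]=G(\xi_{v_1})$ for a bounded Borel $G$, hence $\mathbb{E}[\prod_{j=1}^{m}g_j(\xi_{v_j})\,|\,\mathcal{F}^{\xi,c}_{t^+}]=\mathbb{E}[(g_1G)(\xi_{v_1})\,|\,\mathcal{F}^{\xi,c}_{t^+}]$, to which the Borel one-time case applies; comparing with the analogous computation for $\mathcal{F}^{\xi,c}_t$ (using the $\mathbb{F}^{\xi,c}$-Markov property of $\xi$) closes the induction and hence the proof.

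I expect the main obstacle to be precisely this pair of upgrades, and in particular the passage from weak to total-variation convergence of the one-time conditional laws of $\xi$: this is the point at which the very special structure of the pinning point becomes indispensable — because $Z$ is two-point distributed, the transition kernel is a finite combination of Dirac masses plus an absolutely continuous part with jointly continuous, pointwise-convergent densities, which is exactly what makes Scheff\'e's lemma applicable — and it is also where the hypothesis $\mathbb{P}(\tau>\varepsilon)=1$ has to be brought in to run the argument down to $t=0$.
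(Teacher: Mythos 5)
Your main body of the argument (reverse martingale convergence along $t_n\downarrow t$, the $\mathbb{F}^{\xi,c}$-Markov property, and the right-continuity of $s\mapsto\mathbb{E}[g(\xi_u)\,\vert\,\xi_s]$ from Lemma \ref{lemmatransitionfunctionrigthcontinuous}) is exactly the paper's route, and you actually spell out more carefully than the paper the monotone-class reduction to finitely many future times and the passage from continuous to Borel test functions; incidentally, that last upgrade does not need Scheff\'e or total-variation convergence at all — once $\mathbb{E}[h(\xi_v)\,\vert\,\mathcal{F}^{\xi,c}_{t^+}]=\mathbb{E}[h(\xi_v)\,\vert\,\xi_t]$ holds for all $h\in C_b(\mathbb{R})$, the class of bounded Borel $h$ for which it holds is closed under uniformly bounded pointwise limits (conditional dominated convergence on both sides), so the functional monotone class theorem finishes it, with no appeal to the special two-point structure of $Z$.

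The genuine gap is at $t=0$. You invoke Lemma \ref{lemmatransitionfunctionrigthcontinuous}(ii), whose hypothesis is that $\mathbb{P}(\tau>\varepsilon)=1$ for some $\varepsilon>0$; but the theorem makes no such assumption on $\tau$, and the paper's own leading example (exponentially distributed $\tau$) violates it, since then $\mathbb{P}(\tau>\varepsilon)<1$ for every $\varepsilon>0$. So your argument proves right-continuity at $0$ only for lengths bounded away from zero, and the case where the law of $\tau$ charges every neighbourhood of $0$ is left open — this is precisely the case the paper treats by a separate zero--one law argument: it sets $\tau_\varepsilon=\tau\vee\varepsilon$, observes that the information process built from $\tau_\varepsilon$ coincides with $\xi$ on $\{\tau>\varepsilon\}$, applies Lemma \ref{lemmatransitionfunctionrigthcontinuous}(ii) to that auxiliary process (whose length is bounded below) to get triviality of its germ $\sigma$-algebra, transfers this to conclude that $\mathbb{P}(A\cap\{\tau>\varepsilon\})$ is either $0$ or $\mathbb{P}(\tau>\varepsilon)$ for every $A\in\mathcal{F}^{\xi}_{0^+}$, and then lets $\varepsilon\downarrow 0$ to deduce that $\mathcal{F}^{\xi,c}_{0^+}$ is $\mathbb{P}$-trivial, hence equal to $\mathcal{F}^{\xi,c}_{0}$. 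Without some such localisation in $\tau$, your proof does not cover the stated theorem.
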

\begin{proof}
It is sufficient to prove that for every bounded
$\mathcal{F}_{t+}^{\xi,c}$-measurable $Y$ we have, $\mathbb{P}$-a.s.,
\begin{equation}
\mathbb{E}[Y|\mathcal{F}_{t+}^{\xi,c}]=\mathbb{E}[Y|\mathcal{F}_{t}^{\xi,c}].\label{eqrightcontinuityfiltration}
\end{equation}
This is an immediate consequence of the Markov property of $\xi$ with respect to $\mathbb{F}^{\xi,c}_+$. Let us first prove that $\xi$ is an $\mathbb{F}_+^{\xi,c}$-Markov process, i.e.,
\begin{equation}
\mathbb{E}[g(\xi_{u})\vert\mathcal{F}_{t+}^{\xi}]=\mathbb{E}[g(\xi_{u})\vert\xi_t], \mathbb{P}\text{-a.s.},\label{eqMarkovF_t+}
\end{equation}
for all $t<u$ and for every bounded measurable function $g$.
Throughout the proof we can assume without loss of
generality that the function $g$ is continuous and bounded. Let $(t_{n})_{n\in \mathbb{N}}$
be a decreasing sequence of strictly positive real numbers converging to $t$: that is $ t <\ldots< t_{n+1} < t_{n} < u$, $t_{n} \searrow t$ as $n \rightarrow +\infty $. Since $g$ is bounded, $\mathcal{F}_{t+}^{\xi,c}=\underset{n}{\cap}\mathcal{F}_{t_{n}}^{\xi,c}$ and $\xi$ is an $\mathbb{F}^{\xi,c}$-Markov process we have,  $\mathbb{P}$-a.s., 
\begin{align}
\mathbb{E}[g(\xi_{u})\vert\mathcal{F}_{t+}^{\xi}]&=\lim\limits_{n\rightarrow+\infty}\mathbb{E}[g(\xi_{u})|\mathcal{F}^{\xi,c}_{t_n}]\nonumber\\
&=\lim\limits_{n\rightarrow+\infty}\mathbb{E}[g(\xi_{u})\vert\xi_{t_n}].
\end{align}
In order to prove \eqref{eqMarkovF_t+} we need to show that for all $t\geq 0$, $\mathbb{P}$-a.s.,
\begin{equation}
\lim\limits_{n\rightarrow+\infty}\mathbb{E}[g(\xi_{u})\vert\xi_{t_n}]=\mathbb{E}[g(\xi_{u})\vert\xi_{t}].\label{eqthetransitionfuncionrigthcontinuous}
\end{equation}
According to Lemma \ref{lemmatransitionfunctionrigthcontinuous}, the only case that remains to be proved is $t=0$ with $\mathbb{P}(\tau>0)=0$. To see this, it is sufficient to show that $\mathcal{F}_{0+}^{\xi,c}$ is $\mathbb{P}$-trivial. This amounts to proving that $\mathcal{F}_{0+}^{\xi}$ is $\mathbb{P}$-trivial, since $\mathcal{F}_{0+}^{\xi,c}=\mathcal{F}_{0+}^{\xi} \vee \mathcal{N}_{P}$. For this purpose, let $\varepsilon > 0$ be fixed and consider the stopping time $\tau_{\varepsilon}=\tau \vee \varepsilon$. We define the process $\xi_{t}^{\tau_{\varepsilon}}$ by $$\left\lbrace\xi_{t}^{\tau_{\varepsilon}};\,t\geq 0\right\rbrace:=\left\lbrace\xi_{t}^{r} \vert_{r=\tau \vee \varepsilon};\,t\geq 0\right\rbrace.$$  
First observe that the sets $\{\tau_{\varepsilon}>\varepsilon\}=\{\tau>\varepsilon\}$ are equal  and therefore the following equality of processes holds $$\xi_{\cdot}^{\tau_{\varepsilon}}\mathbb{I}_{\{\tau>\varepsilon\}}=\xi_{\cdot}\;\mathbb{I}_{\{\tau>\varepsilon\}}.$$
Then for each $A\in \mathcal{F}_{0+}^{\xi}$ there exists $B\in \mathcal{F}_{0+}^{\xi^{\tau_{\varepsilon}}}$ such that  
$$A\cap\{\tau>\varepsilon\}=B\cap \{\tau>\varepsilon\}.$$
As $\mathbb{P}(\tau_{\varepsilon}>\varepsilon/2)=1$,  according to Lemma \ref{lemmatransitionfunctionrigthcontinuous}, $\mathcal{F}_{0+}^{\xi^{\tau_{\varepsilon}}}$ is $\mathbb{P}$-trivial. Hence, $\mathbb{P}(B)$ is equal to $1$ or $0$. Consequently, we obtain 
$$\mathbb{P}(A\cap\{\tau>\varepsilon\})=0\text{\,\,or\,\,}\mathbb{P}(A\cap\{\tau>\varepsilon\})=\mathbb{P}(\tau>\varepsilon).$$
Now if $\mathbb{P}(A)>0$, then there exists $\varepsilon > 0$ such that $\mathbb{P}(A\cap \{\tau> \varepsilon\})>0$. Therefore, for all $0<\varepsilon'\leq \varepsilon$ we have 
$$\mathbb{P}(A\cap\{\tau>\varepsilon'\})=\mathbb{P}(\tau>\varepsilon').$$ 
Passing to the limit as $\varepsilon'$ goes to $ 0$ yields $\mathbb{P}(A\cap\{\tau>0\})=\mathbb{P}(\tau>0)=1$. It follows that $\mathbb{P}(A)=1$, which ends the proof.
\end{proof}
Our purpose now is to derive the semi-martingale property of $\xi$ with respect to its own filtration $\mathbb{F}^{\xi,c}$.
\begin{theorem}\label{thmdecompositionsemimartingaletauZ}
The semi-martingale decomposition of $\xi$ in its natural filtration $\mathbb{F}^{\xi,c}$ is given by
\begin{align}
\xi_{t}&=I_t+\dint_{0}^{t}\mathbb{E}\bigg[\dfrac{Z-\xi_s}{\tau-s}\mathbb{I}_{\{s<\tau\}}\vert\xi_s\bigg]\mathrm{d}s\nonumber\\
&=I_t+\dint_0^{t\wedge \tau}\sum\limits_{i=1}^2 \dfrac{(z_i-\xi_s)p_i}{\Psi_{s}(\xi_s,z_1)p_1+\Psi_{s}(\xi_s,z_2)p_2}\dint_{s}^{+\infty}\dfrac{1}{r-s}\dfrac{p(r-s,z_i-\xi_s)}{p(r,z_i)}\mathbb{P}_{\tau}(\mathrm{d}r)\,\mathrm{d}s,\label{equationdecompositionsemitauZ}
\end{align}
where the process  $(I_t, t\geq0)$ is an $\mathbb{F}^{\xi,c}$-Brownian motion stopped at $\tau$ and the function $\Psi_{s}$ is defined by \eqref{eqPsi_s}.
\end{theorem}
\begin{proof}
From the representation \eqref{eqsemimartingalezeta^r,zonR+} we obtain that
\begin{equation}
b_t=\xi_{t}-\int_{0}^{t}\dfrac{Z-\xi_{s}}{\tau-s}\mathbb{I}_{\{s<\tau\}}\mathrm{d}s,\label{eqsemimartingaletildezeta^r,z}
\end{equation}
where the process $b$ is defined as follows:
$$b_{t}(\omega):=b_t^{r,z}(\omega)\vert_{r=\tau(\omega)}^{z=Z(\omega)},$$
for $(t,\omega) \in \mathbb{R}_{+} \times \Omega$, where $(b^{r,z}_t, t\geq 0)$ is an $\mathbb{F}^{\zeta^{r,z}}$-Brownian motion stopped at $r$. Since the random variable $Z$ is integrable, for all $t\in \mathbb{R}_+$, we have 
\begin{equation}
\mathbb{E}\bigg[\dint_{0}^{t}\bigg|\dfrac{Z-\xi_s}{\tau-s}\bigg|\mathbb{I}_{\{s<\tau\}}\mathrm{d}s\bigg]<\infty.\label{eqintegrable}
\end{equation}
To see that, by using the formula of total probability and that $\mathbb{E}[\vert \zeta_{s}^{r,0} \vert ]=\sqrt{\dfrac{2}{\pi}}\sqrt{\dfrac{s(r-s)}{r}}$, we have for all $t\geq 0$,
\begin{align}
\mathbb{E}\bigg[\dint_{0}^{t}\,\bigg\vert\dfrac{Z-\xi_s}{\tau-s}\bigg|&\mathbb{I}_{\{s<\tau\}}\,\mathrm{d}s\bigg]=\dint_{0}^{t}\dint_{\mathbb{R}}\dint_{0}^{+\infty}\mathbb{E}\bigg[\dfrac{|Z-\xi_{s}|}{\tau-s}\,\mathbb{I}_{\{s<\tau\}}\bigg|\tau=r,Z=z\bigg]\mathbb{P}_{\tau}(\mathrm{d}r)\mathbb{P}_{Z}(\mathrm{d}z)\mathrm{d}s\nonumber\\
&=\dint_{0}^{t}\dint_{\mathbb{R}}\dint_{s}^{+\infty}\mathbb{E}\bigg[\dfrac{|z-\zeta_{s}^{r,z}|}{r-s}\bigg]\mathbb{P}_{\tau}(\mathrm{d}r)\mathbb{P}_{Z}(\mathrm{d}z)\mathrm{d}s\nonumber\\
&=\dint_{0}^{t}\dint_{\mathbb{R}}\dint_{s}^{+\infty}\mathbb{E}\bigg[\dfrac{|z\dfrac{r-s}{r}-\zeta_{s}^{r,0}|}{r-s}\bigg]\mathbb{P}_{\tau}(\mathrm{d}r)\mathbb{P}_{Z}(\mathrm{d}z)\mathrm{d}s\nonumber\\
&\leq \dint_{0}^{t}\dint_{\mathbb{R}}\dint_{s}^{+\infty}\bigg(\dfrac{|z|}{r}+\sqrt{\dfrac{2}{\pi}} \sqrt{\dfrac{s}{r(r-s)}}\bigg)\mathbb{P}_{\tau}(\mathrm{d}r)\mathbb{P}_{Z}(\mathrm{d}z)\mathrm{d}s\nonumber\\
&=\mathbb{E}[|Z|]\dint_{0}^{+\infty}\dint_{0}^{t\wedge r}\dfrac{1}{r}\mathrm{d}s\mathbb{P}_{\tau}(\mathrm{d}r)+\sqrt{\dfrac{2}{\pi}} \dint_{0}^{+\infty}\dint_{0}^{t\wedge r}\sqrt{\dfrac{s}{r(r-s)}}\mathrm{d}s\mathbb{P}_{\tau}(\mathrm{d}r).\label{eqintegral}
\end{align}
It is clear that
\begin{equation*}
\dint_{0}^{+\infty}\dint_{0}^{t\wedge r}\dfrac{1}{r}\mathrm{d}s\mathbb{P}_{\tau}(\mathrm{d}r)\leq \dint_{0}^{+\infty}\mathbb{P}_{\tau}(\mathrm{d}r)=1.
\end{equation*}
We split the second integral on the right-hand side of \eqref{eqintegral} into two integrals:
\begin{align*}
\dint_{0}^{+\infty}\dint_{0}^{t\wedge r}\sqrt{\dfrac{s}{r(r-s)}}\mathrm{d}s\mathbb{P}_{\tau}(\mathrm{d}r)&= 	\dint_{0}^{t}\dint_{0}^{t\wedge r}\sqrt{\dfrac{s}{r(r-s)}}\mathrm{d}s\mathbb{P}_{\tau}(\mathrm{d}r)+	\dint_{t}^{+\infty}\dint_{0}^{t\wedge r}\sqrt{\dfrac{s}{r(r-s)}}\mathrm{d}s\mathbb{P}_{\tau}(\mathrm{d}r).
\end{align*}
For the first integral we see that
\begin{align*}
\dint_{0}^{t}\dint_{0}^{t\wedge r}\sqrt{\dfrac{s}{r(r-s)}}\mathrm{d}s\mathbb{P}_{\tau}(\mathrm{d}r)&\leq\dint_{0}^{t}\dint_{0}^{r}\dfrac{1}{\sqrt{r-s}}\mathrm{d}s\mathbb{P}_{\tau}(\mathrm{d}r)=\dint_{0}^{t}2\sqrt{r}\mathbb{P}_{\tau}(\mathrm{d}r)\leq 2\sqrt{t}.
\end{align*}
The second integral can be estimated as follows:
\begin{align*}
\dint_{t}^{+\infty}\dint_{0}^{t\wedge r}\sqrt{\dfrac{s}{r(r-s)}}\mathrm{d}s\mathbb{P}_{\tau}(\mathrm{d}r)&\leq \sqrt{t} 	\dint_{t}^{+\infty}\dfrac{1}{\sqrt{r}}\dint_{0}^{t}\dfrac{1}{\sqrt{r-s}}\mathrm{d}s\mathbb{P}_{\tau}(\mathrm{d}r)\leq 2\sqrt{t}.
\end{align*}
Now let us consider the filtration 
\begin{equation}
\mathbb{H}=\left(\mathcal{H}_t:=\mathcal{F}^{\xi,c}_{t}\vee \sigma(\tau,Z),\,\, t\geq 0\right),
\end{equation}	
which is equal to the initial enlargement of the filtration $\mathbb{F}^{\xi,c}$ by the $\sigma$-algebra $\sigma(\tau,Z)$. From \eqref{eqintegrable}, the process  $b$ is well-defined. Moreover, it is a Brownian motion stopped at $\tau$ with respect to $\mathbb{H}$. Indeed, 
It is clear that the process $b$ is continuous and $\mathbb{H}$-adapted. In order to prove that it is a $\mathbb{H}$-Brownian motion stopped at $\tau$, it suffices to prove that the process $b$ and the process $X$ defined by $X_t := b^2_t - (t \wedge \tau)$, $t \geq 0$,
are both $\mathbb{H}$-martingales. Since $(b^{r,z}_t, t\geq 0)$ is an $\mathbb{F}^{\zeta^{r,z}}$-Brownian motion stopped at $r$, the process defined by $X^{r,z}:=(b^{r,z}_t-t\wedge r, t\geq 0)$ is an $\mathbb{F}^{\zeta^{r,z}}$-martingale. Moreover, we obtain for any $0<t_{1}<t_{2}<...<t_{n}=t$, $n\in \mathbb{N}^*$, $h \geq 0$ and for bounded Borel functions $g$, that
\begin{align*}
\mathbb{E}\left[(b_{t+h}-b_{t})g(\xi_{t_{1}},\ldots,\xi_{t_{n}},\tau,Z)\right] &=\dint_{\mathbb{R}}\dint_{(0,+\infty)}\mathbb{E}[(b_{t+h}^{r,z}-b_{t}^{r,z})g(\zeta_{t_{1}}^{r,z},\ldots,\zeta_{t_{n}}^{r,z},r,z)]\mathbb{P}_{\tau}(\mathrm{d}r)\mathbb{P}_{Z}(\mathrm{d}z)\\
&=0,
\end{align*}
and
\begin{align*}
\mathbb{E}\left[(X_{t+h}-X_{t})g(\xi_{t_{1}},\ldots,\xi_{t_{n}},\tau,Z)\right]  &=\dint_{\mathbb{R}}\dint_{(0,+\infty)}\mathbb{E}[(X_{t+h}^{r,z}-X_{t}^{r,z})g(\zeta_{t_{1}}^{r,z},\ldots,\zeta_{t_{n}}^{r,z},r,z)]\mathbb{P}_{\tau}(\mathrm{d}r)\mathbb{P}_{Z}(\mathrm{d}z)\\
&=0.
\end{align*}
The desired result follows by a standard monotone class argument. A well known result of filtering theory, see Proposition
2.30, p. 33 in \cite{BCfiltering} tells us that the process $I$  given by
\begin{equation}
I_t=\xi_{t}-\dint_{0}^{t}\mathbb{E}\bigg[\dfrac{Z-\xi_s}{\tau-s}\mathbb{I}_{\{s<\tau\}}\bigg|\mathcal{F}_s^{\xi}\bigg]\mathrm{d}s,\; t\geq 0,\label{eqinnovationprocesswithexpectation}
\end{equation}
is an $\mathbb{F}^{\xi,c}$-Brownian motion stopped at $\tau$.
From Theorem \ref{thmrightcontinuityfiltration}, the  filtration $\mathbb{F}^{\xi,c}$ satisfies the usual conditions of right-continuity and completeness. The semi-martingale decomposition of $\xi$ with respect to its own filtration $\mathbb{F}^{\xi,c}$ is given by \eqref{eqinnovationprocesswithexpectation}.
Therefore, it remains to compute the conditional expectation of $\dfrac{Z-\xi_s}{\tau-s}\mathbb{I}_{\{s<\tau\}}$ with respect to $\mathcal{F}_s^{\xi,c}$. Indeed, using  \eqref{eqlawoftauZgivenfzeta} we have,
$\mathbb{P}$-a.s.,
\[
\begin{array}{ll}
\mathbb{E}\left[\dfrac{Z-\xi_s}{\tau-s}\mathbb{I}_{\{s<\tau\}}\vert\mathcal{F}_{s}^{\xi,c}\right]& =\sum\limits_{i=1}^2 \dfrac{(z_i-\xi_s)p_i}{\Psi_{s}(\xi_s,z_1)p_1+\Psi_{s}(\xi_s,z_2)p_2}\dint_{s}^{+\infty}\dfrac{1}{r-s}\dfrac{p(r-s,z_i-\xi_s)}{p(r,z_i)}\mathbb{P}_{\tau}(\mathrm{d}r).
\end{array}
\]
Hence we derive the canonical decomposition \eqref{equationdecompositionsemitauZ} of $\xi$ as a semi-martingale with respect to its
own filtration $\mathbb{F}^{\xi,c}$.
\end{proof}
\textbf{Acknowldgements:}
I would like to express particular thanks to the anonymous referees for the constructive comments that greatly improved the manuscript. I would also like to express my deep gratitude to Professor Mohamed Erraoui and Professor Astrid Hilbert  for their guidance, enthusiastic encouragement and for many stimulating conversations.  My grateful thanks are also extended to Professor Monique Jeanblanc for the many helpful comments during the preparation of the paper. I gratefully acknowledge the financial support by an Erasmus+ International Credit Mobility exchange project coordinated by Linnaeus University.


\begin{thebibliography}{99}
\bibitem{A}
Alili, L. Canonical decompositions of certain generalized Brownian bridges. Electron. Comm. Probab. 7 (2002), 27--36.
\bibitem{B}
Back, K. Insider trading in continuous time. Rev. Financ. Stud. 5 (1992), 387--409.
\bibitem{BCfiltering}
Bain, A.; Crisan, D. \textit{Fundamentals of stochastic filtering}. Stochastic Modelling and Applied Probability, 60. Springer, New York, (2009).	
\bibitem{BBE}
Bedini, M. L.; Buckdahn, R.; Engelbert, H. J. Brownian bridges on random intervals. Theory Probab. Appl. 61 (2017), no. 1, 15--39.
\bibitem{BG}
Blumenthal, R. M.; Getoor, R. K. \textit{ Markov processes and potential theory}. Academic Press, New York-London (1968).
\bibitem{BJ}
Boogert, A.; de Jong, C. Gas storage valuation using a Monte Carlo method. J. Derivatives 15 (2008), 81--98.
\bibitem{BS}
Brennan, M. J.; Schwartz, E. S. Arbitrage in stock index futures. Journal of Business. 63 (1990), 7--31.
\bibitem{BHM2007}
Brody, D. C.; Hughston, L. P.; Macrina, A. Beyond hazard rates: a new framework to credit-risk modelling. In Advances in mathematical finance (eds M. Fu, R. Jarrow, J.-Y. J. Yen and R. Elliott), pp. 231–257, (2007).
\bibitem{BHM2008}
Brody, D. C.; Hughston, L. P.; Macrina, A. Information-based asset pricing. Int. J. Theor. Appl. Finance 11 (2008), no. 1, 107--142.
\bibitem{BHM}
Brody, D.C.; Hughston, L.P.; and  Macrina, A. Dam rain and cumulative gain. Proc. R. Soc. Lond. Ser. A Math. Phys. Eng. Sci. 464 (2008), no. 2095, 1801--1822.
\bibitem{CL}
Carmona, R.; Ludkovski, M. Valuation of energy storage: An optimal switching approach. Quant. Finance 10 (2010), no. 4, 359--374.
\bibitem{CF}
Chen, Z.; Forsyth, P. A. A semi-lagrangian approach for natural gas storage valuation and optimal operations. SIAM J. Sci. Comput. 30 (2007), no. 1, 339--368.
\bibitem{EV}
Ekstr\"om, E.; Vaicenavicius, J. Optimal stopping of a Brownian bridge with unknown pinning point. Stochastic Process. Appl. 130 (2020), no. 2, 806--823.
\bibitem{EW}
Ekstr\"om, E.; Wanntorp, H. Optimal stopping of a Brownian bridge. J. Appl. Probab. 46 (2009), no. 1, 170--180.
\bibitem{EY}
Emery, M.; Yor, M. A parallel between Brownian bridges and gamma bridges. Publ. Res. Inst. Math. Sci. 40 (2004), no. 3, 669--688.
\bibitem{EHL}
Erraoui, M.; Hilbert, A.; Louriki, M. Bridges with random length: gamma case. J. Theoret. Probab. 33 (2020), no. 2, 931--953.
\bibitem{EHL(Levy)}
Erraoui, M.; Hilbert, A.; Louriki, On a L\'evy process pinned at random time. Forum Math. 33 (2021), no. 2, 397--417.
\bibitem{EL}
Erraoui, M.; Louriki, M. Bridges with random length: Gaussian-Markovian case. Markov Process. Related Fields 24 (2018), no. 4, 669--693.
\bibitem{FG}
Fitzsimmons, P. J.; Getoor, R. K. Occupation time distributions for L\'evy bridges and excursions. Stochastic Process. Appl. 58 (1995), no. 1, 73--89.
\bibitem{FPY}
Fitzsimmons P.J.;  Pitman J. ;  Yor M. Markovian bridges: Construction, palm interpretation, and splicing. Seminar on Stochastic Processes, 1992 (Seattle, WA, 1992), 101--134, Progr. Probab., 33, Birkhäuser Boston, Boston, MA, (1993).
\bibitem{GSV}
Gasbarra, D.; Sottinen, T.; Valkeila, E. Gaussian bridges. Stochastic analysis and applications, 361--382, Abel Symp., 2, Springer, Berlin, (2007).
\bibitem{H}
Holland, A. Optimization of injection/withdrawal schedules for natural gas storage facilities. In
Twenty-seventh SGAI International Conference on Artificial Intelligence, (2007).
\bibitem{HHM}
Hoyle, E.;  Hughston, L.P.;  Macrina, A. L\'evy random bridges and the modelling of financial information. Stochastic Process. Appl. 121 (2011), no. 4, 856--884.
\bibitem{K}
Kyle, A. Continuous auctions and insider trading. Econometrica. 53 (1985), 1315--1335.
\bibitem{R}
Ros\`en, B. Limit theorems for sampling from finite populations. Ark. Mat. 5 (1965), 383--424.
\bibitem{S}
Shiryaev, Albert N. \textit{Probability. 1}.  Springer, New York, Third edition, (2016).
\end{thebibliography}
\end{document}